\newtheorem{question}[theorem]{Open Question}
\newcommand{\lgk}[1]{\mathtt{#1}}
\newcommand{\prp}[1]{\textsf{#1}}
\newcommand{\cpl}{\lgk{CPL}}
\newcommand{\ipc}{\lgk{IPC}}
\newcommand{\SF}{\lgk{S4}}
\newcommand{\Kbas}{\lgk{K}}
\newcommand{\Tmod}{\lgk{T}}
\newcommand{\Dmod}{\lgk{D}}
\newcommand{\Sfiv}{\lgk{S5}}
\newcommand{\Grz}{\lgk{Grz}}
\newcommand{\KF}{\lgk{K4}}
\newcommand{\DF}{\lgk{D4}}
\newcommand{\mtl}{\lgk{MTL}}
\newcommand{\NF}{\lgk{N4}^\bot}
\newcommand{\NFmin}{\lgk{N4}}
\newcommand{\NE}{\lgk{N3}^\bot}
\newcommand{\NEmin}{\lgk{N3}}
\newcommand{\Jhn}{\lgk{Jhn}}
\newcommand{\BL}{\lgk{BL}}
\newcommand{\R}{\lgk{R}}
\newcommand{\Rt}{\lgk{R}^{\mathsf{t}}}
\newcommand{\SemRt}{\lgk{SemR}^{\mathsf{t}}}
\newcommand{\RM}{\lgk{RM}}
\newcommand{\RMt}{\lgk{RM}^{\mathsf{t}}}
\newcommand{\FL}{\lgk{FL}}
\newcommand{\FLe}{\lgk{FL_e}}
\newcommand{\FLw}{\lgk{FL_w}}
\newcommand{\FLec}{\lgk{FL_{ec}}}
\newcommand{\FLcw}{\lgk{FL_{cw}}}
\newcommand{\FLew}{\lgk{FL_{ew}}}
\newcommand{\FLcm}{\lgk{FL_{cm}}}
\newcommand{\FLecw}{\lgk{FL_{ecw}}}
\newcommand{\SemFL}{\lgk{SemFL}}
\newcommand{\SemFLcm}{\lgk{SemFL_{cm}}}
\newcommand{\SemRLcm}{\lgk{SemFL^+_{cm}}}
\newcommand{\SemFLecm}{\lgk{SemFL_{ecm}}}
\newcommand{\SemRLecm}{\lgk{SemFL^+_{ecm}}}
\newcommand{\De}{\mathrm{\Delta}}
\newcommand{\Ga}{\mathrm{\Gamma}}
\newcommand*{\pfa}[1]{\mbox{\footnotesize $#1$}} 	 
\newcommand*{\seq}{{\vphantom{A}\Rightarrow{\vphantom{A}}}}
\newcommand{\wlr}{\textup{(i)}}
\newcommand{\wrr}{\textup{(o)}}
\newcommand{\cnr}{\textup{(c)}}
\newcommand{\exr}{\textup{(e)}}
\newcommand{\mglr}{\textup{(m)}}
\newcommand{\Lbas}{\Ln_{\mathsf{bas}}}
\newcommand{\Lmod}{\Ln_{\mathsf{mod}}}
\newcommand{\Lsc}{\Ln_{\mathsf{sc}}}
\newcommand{\var}[1]{\mathsf{Var}(#1)}
\newcommand{\Ln}{\mathcal{L}}
\newcommand{\Var}{\mathsf{Var}}
\newcommand{\Fm}{\mathsf{Fm}}
\newcommand{\Lk}{\lgk{L}}
\newcommand{\meet}{\wedge}
\newcommand{\join}{\vee}
\renewcommand{\phi}{\varphi}
\newcommand{\under}{\backslash}
\newcommand{\ovr}{/}
\title{Interpolation in Non-Classical Logics}
\author{Wesley Fussner}{Institute of Computer Science of the Czech Academy of Sciences}{fussner@cs.cas.cz}{}{The author's work was supported by the Czech Science Foundation project 25-18306M.}
\authorrunning{W. Fussner}
\titlerunning{Interpolation in Non-Classical Logics}
\begin{document}

\maketitle

\begin{abstract}
	This chapter surveys some of the main results on interpolation in several of the most prominent families of non-classical logics. Special attention is given to the distinction between the two most commonly studied variants of interpolation---namely, Craig interpolation and deductive interpolation. Our discussion focuses primarily on how these properties present in families of logical systems taken as a whole, particularly those comprising all axiomatic extensions of any of several notable non-classical logics. We consider a range of important examples: superintuitionistic and modal logics, fuzzy logics, paraconsistent logics, relevant logics, and substructural logics.
\end{abstract}

\tableofcontents


\section{Introduction}

In this chapter, we survey the landscape of interpolation in non-classical logics. We have already seen one case study from the non-classical world---namely the modal logic $\Kbas$---in \refchapter{chapter:modal}. Here we will see many other logical systems in the vicinity of $\Kbas$, as well as a plethora of other, more distant logics.

One theme that becomes apparent throughout our discussion is that non-classical logics present many fine distinctions that do not appear for classical propositional logic $\cpl$, or even for many well-behaved non-classical systems such as $\Kbas$. The most salient of these is that the relationship between interpolation for the consequence relation of a logic and interpolation for the primitive implication connective of that logic (if any) often breaks down, leading to distinct interpolation properties that transparently coincide for $\cpl$. Our focus will be on the two most prominent variants of interpolation, here called the Craig interpolation property and the deductive interpolation property, but we will also consider a number of other interpolation properties that have proven important in particular logical environments. We will also discuss links between several interpolation properties, most often given by different kinds of deduction theorems.

This chapter covers a lot of territory, ranging over an incredibly diverse collection of logical systems that have arisen from many different motivations. To make sense of this diversity, we will frame our discussion in the language of (Tarskian) consequence relations, and we will begin by recalling some of the basics of these as well as several related notions. Due the scope of the topic, many important aspects of interpolation in non-classical logics will be omitted for lack of space. Our discussion will not include several prominent variants of interpolation, such as uniform interpolation (see \refchapter{chapter:uniform}) and Lyndon interpolation (see \refchapter{chapter:prooftheory}), and will for the most part avoid discussion of the connection between interpolation and Beth definability, leaving coverage of these topics to other chapters of this volume.

We will also omit discussion of the role the underlying language plays in shaping interpolation. The importance of this topic is evident already in the case of variants of $\cpl$ itself: If $\cpl$ is formulated with constants for truth and falsity, then $\cpl$ enjoys the Craig interpolation theorem in the following form:
\begin{equation*}
  \parbox{\dimexpr\linewidth-6em}{%
    \strut
     Whenever $\alpha,\beta\in\Fm_\cpl$ and $\vdash_\cpl \alpha\to\beta$, then there exists $\delta\in\Fm_\cpl$ such that $	\vdash_\cpl \alpha\to\delta$, $\vdash_\cpl \delta\to\beta$, and $\var{\delta}\subseteq\var{\alpha}\cap\var{\beta}$.
    \strut
  }
\end{equation*}
However, the truth constants themselves may supply interpolants for some theorems, like $(p\meet\neg p)\to (q\join\neg q)$. When truth constants are missing from the language, Craig interpolation therefore must instead by formulated to exclude these kinds of examples:
\begin{equation*}
  \parbox{\dimexpr\linewidth-6em}{%
    \strut
     Whenever $\alpha,\beta\in\Fm_\cpl$, $\not\vdash_\cpl\neg\alpha$, $\not\vdash_\cpl\beta$, and $\vdash_\cpl \alpha\to\beta$, then there exists $\delta\in\Fm_\cpl$ such that $\vdash_\cpl \alpha\to\delta$, $\vdash_\cpl \delta\to\beta$, and $\var{\delta}\subseteq\var{\alpha}\cap\var{\beta}$.
    \strut
  }
\end{equation*}
The variants of $\cpl$ with and without truth constants are best understood as \emph{different logics} when it comes to interpolation, despite being notational variations of the same system. The lesson here is that precise formulations of the systems in question---and of interpolation itself---can have quite significant ramifications. Indeed, a logic that is lacking interpolation (in some specified form) can often be made more expressive with a mild expansion of the underlying language in a way that restores interpolation. This phenomenon is poorly understood in the general setting of non-classical logics (compare, e.g.,  \cite[Theorem 14]{HandbookModal} and \cite{Montagna2006}), and the possible variations in language are virtually endless. We will thus mostly exclude this important topic from our discussion here.

Most of this chapter consists of a tour of interpolation in particular families of logical systems, usually organized by recalling a few of the most naturally defined logics in those families and then discussing what is known about interpolation in the axiomatic extensions of these. Our tour of these systems touches only on the highlights, focusing on a few of the most salient results and making no effort to give a comprehensive treatment. Many of the theorems we summarize are major results whose proofs are well beyond the scope of this chapter. In most cases, we cannot do justice to even a sketch of these proofs in this lone chapter, so proofs will be entirely omitted. However, we typically indicate the overall proof methodology pursued in a given study. There are, for the most part, three main lines of argumentation: Proof-theoretic investigations anchored in Maehara's method (see \refchapter{chapter:prooftheory} for a full discussion), algebraic methods focused on the link between interpolation and various kinds of amalgamation (discussed extensively in \refchapter{chapter:algebra}), and model-theoretic studies that exploit quantifier-elimination theorems. We give no more than allusions to the techniques at play here, leaving the discussion of methods to other chapters. However, we provide an extensive bibliography in order to point the reader to resources where the main technical work is carried out.

Our overall organizational motif---that is, considering prominent logics in turn and, for each of these, indicating what is known about interpolation in axiomatic extensions---was selected for four reasons. Firstly, and perhaps most obviously, this scheme provides some principled means of systematizing the huge amount of information available in the literature, and in particular giving the reader some idea of what classifications are known. Secondly, the chosen organizational scheme gives a coarse measure of \emph{how robust} interpolation is within certain logical environments: As we will see, certain logics (e.g., intuitionistic logic, Section~\ref{subsec:intuitionistic}) have very few axiomatic extensions with interpolation, whereas other closely related logics (e.g., $\SemRLcm$, Section~\ref{subsec:notexchange}) have many. The third reason we adopt our chosen organizational scheme is also comparative: For some logics, we have an exhaustive classification of axiomatic extensions with various kinds of interpolation (again, intuitionistic logic, Section~\ref{subsec:intuitionistic}), while in other, seemingly analogous cases our techniques fail us (e.g., Johansson's minimal logic, Section~\ref{subsec:minimal}). Side-by-side comparisons of how interpolation operates in the axiomatic extensions of given logics gives some indication of what our methods are capable of, as well as points to many open questions. We explicitly identify several such open questions along the way, hoping to inspire readers to further advance the lines of research articulated here. Fourthly and finally, the logics enjoying interpolation are especially \emph{natural} among their cousins in a given family, as determined by a range of different philosophical motivations; see \refchapter{chapter:philosophy}. The organizational scheme adopted here is thus, we hope, a useful device for practitioners arriving at interpolation via these motivations, though we don't discuss them further here.

\section{Consequence, Interpolation, and Deduction Theorems}\label{sec:basic}

Our discussion of interpolation in non-classical logics will be carried out within the context of (Tarskian) consequence relations, which provide a framework allowing us to compare a range of differing logical systems. After discussing the basics of consequence relations, we will introduce a few of the most relevant interpolation notions for this chapter, and subsequently discuss deduction theorems and how they relate the variants of interpolation we will consider. For background on consequence relations and the other preliminary material discussed in this section, please consult the monographs \cite{CitMur,Font2016}.

\subsection{Consequence Relations}

As is most common in the literature, we will always consider propositional logics that are defined over a denumerable set $\Var$ of propositional variables and a language $\Ln$ consisting of finitely many logical connectives, each of finite (possibly 0) arity. We denote by $\Fm_\Ln$ the collection of all formulas built from $\Var$ using the connectives from $\Ln$. Whenever $\alpha\in\Fm_\Ln$, we denote the set of variables appearing in $\alpha$ by $\Var(\alpha)$. If $\Gamma$ is a set of formulas, we also write $\Var(\Gamma) = \bigcup\{\Var(\alpha) \mid \alpha\in\Gamma\}$.

A \emph{consequence relation} over $\Ln$ is a relation $\vdash$ from $\mathcal{P}(\mathcal{\Fm_\Ln})$ to $\Fm_\Ln$ satisfying the following two conditions, where $\Gamma\cup \Pi\cup \{ \alpha \}\subseteq\Fm_\Ln$:
\begin{itemize}
\item Reflexivity: If $\alpha\in\Gamma$, then $\Gamma\vdash\alpha$.
\item Transitivity: If $\Gamma\vdash \alpha$ and $\Pi\vdash\beta$ for every $\beta\in\Gamma$, then $\Pi\vdash\alpha$.
\end{itemize}
It is immediate that every consequence relation also satisfies monotonicity: If $\Gamma\vdash \alpha$ and $\Gamma\subseteq\Pi$, then $\Pi\vdash\alpha$.

The consequence relations that we consider in this chapter satisfy two further properties:
\begin{itemize}
\item Structurality: If $\Gamma\vdash \alpha$, then $\sigma[\Gamma]\vdash\sigma(\alpha)$ for every substitution $\sigma$.
\item Finitarity: If $\Gamma\vdash\alpha$, then there exists a finite subset $\Gamma'\subseteq\Gamma$ such that $\Gamma'\vdash\alpha$.
\end{itemize}
If $\Gamma,\Pi\subseteq\Fm_\mathcal{L}$ and $\vdash$ is a consequence relation over $\mathcal{L}$, then we write $\Gamma\vdash\Pi$ if $\Gamma\vdash\alpha$ for every $\alpha\in\Pi$.

Aside from our few references to first-order logic in this chapter, we will take \emph{logics} (over the language $\Ln$) to be pairs $\Lk=(\Ln,\vdash_\Lk)$, where $\Ln$ is some finitary language and $\vdash_\Lk$ is a finitary, structural consequence relation over $\Ln$. If $\Lk=(\Ln,\vdash_\Lk)$ is a logic, a formula $\alpha\in\Fm_{\Ln}$ is called a
 \emph{theorem} of $\Lk$ provided that $\emptyset\vdash_\Lk\alpha$, which we usually abbreviate by $\vdash_\Lk\alpha$. By an \emph{extension} of a logic $\Lk = (\Ln,\vdash_\Lk)$ we mean any logic of the form $(\Ln,\vdash'_\Lk)$, where $\vdash'_\Lk$ contains $\vdash_\Lk$. We reserve the term \emph{expansion} for enlargements of the language and will not use `extension' for this. If $\Lk = (\Ln,\vdash_\Lk)$ is a logic, a \emph{fragment} of $\Lk$ is a logic $(\Ln',\vdash_\Lk)$ where $\Ln'\subseteq\Ln$.

There is a long tradition of studying a given logic $\Lk$ by looking at its extensions, particularly those that arise by adding axioms (i.e., as opposed to arbitrary rules). If $\Lk = (\Ln,\vdash_\Lk)$ is a logic, then a logic $\Lk' = (\Ln,\vdash_{\Lk'})$ is called an \emph{axiomatic extension} of $\Lk$ if there is a subset $\Pi\subseteq\Fm_\Ln$ that is closed under substitutions such that, for any $\Gamma\cup\{\alpha\}\subseteq\Fm_\Ln$,
\[
\Gamma\vdash_{\Lk'}\alpha \iff \Gamma,\Pi\vdash_\Lk\alpha.
\]
The only axiomatic extensions of classical propositional logic $\cpl$ are $\cpl$ itself and the trivial logic consisting of all formulas in the language. However, for non-classical logics, there are often numerous proper axiomatic extensions. These always form a lattice when ordered by inclusion of the associated consequence relations. The structure of the lattice of axiomatic extensions of a given logic casts considerable light on the logic itself. In much of this chapter, we will examine, for many prominent non-classical logics $\Lk$, which of the axiomatic extensions of $\Lk$ have various interpolation properties. This sort of inquiry has been carried out in a wide range of logical contexts, and illuminates how interpolation operates in such contexts. Of particular benefit of the wide view taken in this chapter, this mode of investigation also allows for the comparison of interpolation phenomena across varying contexts and thereby gives us one perspective on how interpolation operates on a `big picture' level.

\subsection{Variants of Interpolation}

There are a huge number of interpolation notions that have been considered in the non-classical logic literature, many of which are specialized to particular contexts. In this chapter, we will primarily consider two such notions that make sense for most well-studied logical systems. The first of them, and probably the most familiar, is the \emph{Craig interpolation property} (or \prp{CIP}). It can be formulated for any logic $\Lk = (\Ln,\vdash_\Lk)$ that has an implication-like connective $\to$ in its language, and may be stated as follows:
\begin{equation}
  \tag{\prp{CIP}}\label{eq:CIP}
  \parbox{\dimexpr\linewidth-6em}{%
    \strut
     Whenever $\alpha,\beta\in\Fm_\Ln$ and $\vdash_\Lk \alpha\to\beta$, then there exists $\delta\in\Fm_\Ln$, called a \emph{Craig interpolant}, such that $	\vdash_\Lk \alpha\to\delta$, $\vdash_\Lk \delta\to\beta$, and $\var{\delta}\subseteq\var{\alpha}\cap\var{\beta}$.
    \strut
  }
\end{equation}
Of course, here we are contemplating that there is a particular implication-like connective $\to$ that is primitive in the language, and that there is only one such implication. This need not be the case: Some logics have multiple primitive implication connectives, and some have definable implications that we may want to consider. Indeed, one might speak of the \prp{CIP} for some two-variable term $I(p,q)$ that is definable in the language of $\Ln$ but does not appear as a primitive connective in $\Ln$. We do not dwell on this point, but note that many logics have a form of the \prp{CIP} for some defined implication. For instance, there are continuum-many axiomatic extensions of linear logic that have a version of the \prp{CIP} with respect to the non-standard implication $I(p,q) = \;!p\to!q$; see \cite[Proposition~3.16]{FSLinear}. For most of the logics we will consider, it will be clear from context which implication we are referring to when we discuss the \prp{CIP}.

The preceding comments underscore some of the motivations for having an implication-free formulation of interpolation. The most prominent such formulation is given by the \emph{deductive interpolation property} (or \prp{DIP}), which for a logic $\Lk = (\Ln,\vdash_\Lk)$ provides that:
\begin{equation}
  \tag{\prp{DIP}}\label{eq:DIP}
  \parbox{\dimexpr\linewidth-6em}{%
    \strut
Whenever $\alpha,\beta\in\Fm_\Ln$ and $\alpha\vdash_\Lk\beta$, then there exists $\delta\in\Fm_\Ln$, called a \emph{deductive interpolant}, such that $\alpha\vdash_\Lk\delta$, $\delta\vdash_\Lk\beta$, and $\var{\delta}\subseteq\var{\alpha}\cap\var{\beta}$.
    \strut
    }
  \end{equation}
Deductive interpolation makes sense for any logic, even those that have no implication-like connective at all, and is not otherwise bound up with the language of the particular logical system in question. It is thus a good notion for comparing interpolation phenomena across rather different logical contexts. Note that the \prp{DIP} has sometimes been referred to by other names in the literature: Maksimova sometimes calls it the interpolation principle of deducibility (see \cite{Mak91}), or the \prp{IPD} for short, and Madar\'{a}sz calls it the turnstile Craig interpolation property (see \cite{Mad98}). Sometimes causing confusion, the \prp{DIP} is occasionally simply called the Craig interpolation property.

The relationship between the \prp{CIP} (for some given implication) and the \prp{DIP} is quite complicated. A given logic may have either one, both, or neither of these properties. We will return to the relationship between these two notions shortly in Section~\ref{sec:deduction}.

Several stronger variants of the \prp{CIP} and the \prp{DIP} are commonly studied in the literature. These mostly fall into two categories. First, sometimes additional constraints are placed on the variables appearing in an interpolant, as in Lyndon interpolation (see \refchapter{chapter:prooftheory}). Second, sometimes additional constraints are placed on the interpolant itself, such as demanding that it is in some sense `minimal' or `maximal', as in uniform interpolation (see \refchapter{chapter:uniform}). Other constraints can be placed on the language appearing in the interpolant, such as demanding that certain connectives appear in common; see, e.g., \cite{vBenthem97}. For the most part, we will leave these topics to subsequent chapters, focusing on the \prp{CIP} and \prp{DIP} in their most basic formulation. However, we will touch on a few other versions of interpolation that are important for certain logics, two of which we mention now.

First, the following stronger kind of interpolation is sometimes considered in the literature. Here we will call it the \emph{Maehara interpolation property} (or \prp{MIP}), but elsewhere it is sometimes called the strong deductive interpolation property or the \prp{SDIP}. We say that a logic $\Lk = (\Ln,\vdash_\Lk)$ has the Maehara interpolation property provided that:
\begin{equation}
  \tag{\prp{MIP}}\label{eq:MIP}
  \parbox{\dimexpr\linewidth-6em}{%
    \strut
Whenever $\Gamma\cup\Pi\cup\{\alpha\}\subseteq\Fm_\Ln$ and $\Gamma,\Pi\vdash_\Lk\alpha$, then there exists $\delta\in\Fm_\Ln$ such that $\Gamma\vdash_\Lk\delta$ and $\delta,\Pi\vdash_\Lk\alpha$, and also $\var{\delta}\subseteq\var{\Gamma}\cap\var{\Pi\cup\{\alpha\}}$.
    \strut
    }
  \end{equation}
Obviously, the \prp{MIP} implies the \prp{DIP} for any logic.

Second, in certain logics for which the \prp{DIP} fails, it is sometimes useful to consider a slightly weaker variant of the \prp{MIP}. Assume that $\Lk = (\Ln,\vdash_\Lk)$ is a logic with a falsity constant $\bot$ appearing in its language. We say that $\Lk$ has the \emph{weak interpolation property} (or \prp{WIP}) provided that:
\begin{equation}
  \tag{\prp{WIP}}\label{eq:WIP}
  \parbox{\dimexpr\linewidth-6em}{%
    \strut
If $\alpha,\beta\in\Fm_{\Ln}$ and $\alpha,\beta\vdash_\lgk{L}\bot$, then there is a formula $\delta$ such that $\alpha\vdash_\lgk{L}\delta$, $\delta,\beta\vdash_\lgk{L}\bot$, and $\var{\delta}\subseteq\var{\alpha}\cap\var{\beta}$.
    \strut
    }
  \end{equation}

\subsection{Deduction Theorems and Relations Among Interpolation Variants}\label{sec:deduction}

Deduction theorems link deducibility to theoremhood, and thus unsurprisingly play a significant role in connecting the \prp{CIP} and the \prp{DIP}. Like interpolation, deduction theorems come in a range of different forms that are often specialized to particular logical contexts. The most familiar of these are \emph{classical deduction theorems}, which mirror the deduction theorem found in $\cpl$. We say that a logic $\Lk = (\Ln,\vdash_\Lk)$ has a classical deduction theorem provided that, for any $\Gamma\cup\{\alpha,\beta\}\subseteq\Fm_\Ln$,
\[
\Gamma,\alpha\vdash_\Lk\beta \iff \Gamma\vdash_\Lk\alpha\to\beta.
\]
One may readily show that if $\Lk$ has a classical deduction theorem, then $\Lk$ has the \prp{CIP} if and only if $\Lk$ has the \prp{DIP}. This is true, for example, for $\cpl$ as well as propositional intuitionistic logic (see Section~\ref{subsec:intuitionistic} below) and for many other logical systems.

Moving to a slightly higher level of abstraction, we say that $\Lk = (\Ln,\vdash_\Lk)$ has an \emph{explicit deduction theorem} if there is a formula $I(p,q)$ in $\Ln$ in at most two variables $p,q$ such that for any $\Gamma\cup\{\alpha,\beta\}\subseteq\Fm_\Ln$,
\[
\Gamma,\alpha\vdash_\Lk\beta \iff \Gamma\vdash_\Lk I(\alpha,\beta).
\]
The classical deduction theorem takes $I(p,q)$ to be $p\to q$. On the other hand, some logics have an explicit deduction theorem but not a classical one. For instance, the global consequence relation of the modal logic $\SF$ (see Section~\ref{subsec:modal}) satisfies the following explicit deduction theorem:
\[
\Gamma,\alpha\vdash_\Lk\beta \iff \Gamma\vdash_\Lk \Box\alpha\to\beta.
\]
Actually, this holds for any axiomatic extension of $\SF$ and this shows that having an explicit deduction theorem is insufficient to guarantee that the \prp{DIP} implies the \prp{CIP} for some given logic (as we will see in Section~\ref{subsec:modal}, there are extensions of $\SF$ with the \prp{DIP} but not the \prp{CIP}).

Moving up another layer of abstraction, we say that a logic $\Lk = (\Ln,\vdash_\Lk)$ has a \emph{local deduction theorem} if there is a set $\Lambda$ of formulas in at most two variables such that for any $\Gamma\cup\{\alpha,\beta\}\subseteq\Fm_\Ln$,
\[
\Gamma,\alpha\vdash_\Lk\beta \iff \Gamma\vdash_\Lk I(\alpha,\beta)\text{ for some }I\in\Lambda.\textsuperscript{1}\setcounter{footnote}{1}
\]
The global consequence relation of the normal modal logic $\KF$ (see \refchapter{chapter:modal} and Section~\ref{subsec:modal} below) has a local deduction theorem but not an explicit one. Many of the logics we consider have a local deduction theorem of a particularly nice form: Each of the formulas $I(p,q)\in\Lambda$ has the form $t(p)\to q$ for some formula $t$ in one variable. We will call these \emph{implicative local deduction theorems}, though this is not a standard term in the literature.

\footnotetext{Actually, in abstract algebraic logic, often entire sets of formulas are taken to stand in place of individual formulas acting as an implication-like connective. The practicality of this is uncertain; see the discussion on uniterm vs. multiterm deduction detachment theorems at \cite[p.~166]{Font2016}.}

The relevance of implicative local deduction theorems to our discussion is summarized in the following two results. They are both folklore but mostly only proven in special cases (see, e.g., \cite{KO10}), so we include proofs for completeness.

\begin{proposition}
Let $\Lk = (\Ln,\vdash_\Lk)$ be a logic with an implication connective $\to$ in its language, and assume that:
\begin{enumerate}
\item $\Lk$ satisfies modus ponens, i.e., $\alpha,\alpha\to\beta\vdash_\Lk\beta$ for any $\alpha,\beta\in\Fm_\Ln$.
\item $\Lk$ has an implicative local deduction theorem given by a set of formulas $\Lambda$.
\end{enumerate}
Then if $\Lk$ has the \prp{CIP}, then $\Lk$ has the \prp{DIP}.
\end{proposition}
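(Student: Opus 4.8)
The plan is to unpack the definitions and chain together the hypotheses in the obvious way. Suppose $\alpha \vdash_\Lk \beta$; I want to produce a deductive interpolant. By the implicative local deduction theorem (applied with $\Gamma = \emptyset$), the statement $\alpha \vdash_\Lk \beta$ is equivalent to $\vdash_\Lk t(\alpha) \to \beta$ for some $t$ with $(t(p)\to q) \in \Lambda$. Now apply the \prp{CIP} to the theorem $\vdash_\Lk t(\alpha)\to\beta$: there is a formula $\delta$ with $\vdash_\Lk t(\alpha)\to\delta$, $\vdash_\Lk \delta\to\beta$, and $\var{\delta}\subseteq\var{t(\alpha)}\cap\var{\beta}$.

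The remaining work is to convert these implicational facts back into consequences and to check the variable condition. For the variable condition, note that $t$ is a formula in the single variable $p$, so $\var{t(\alpha)}\subseteq\var{\alpha}$; hence $\var{\delta}\subseteq\var{t(\alpha)}\cap\var{\beta}\subseteq\var{\alpha}\cap\var{\beta}$, exactly as required. For the consequences: from $\vdash_\Lk \delta\to\beta$ together with modus ponens and reflexivity/monotonicity, I get $\delta\vdash_\Lk\beta$ directly. For the other half I need $\alpha\vdash_\Lk\delta$; I have $\vdash_\Lk t(\alpha)\to\delta$, so by modus ponens $t(\alpha)\vdash_\Lk\delta$, and it then suffices to observe $\alpha\vdash_\Lk t(\alpha)$. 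This last fact should follow from the local deduction theorem itself: since $t(\alpha)\vdash_\Lk t(\alpha)$ by reflexivity, the ($\Leftarrow$) direction of the deduction theorem (or rather, a small argument using that $\alpha,\alpha\to\beta\vdash\beta$ and that $\Lambda$ furnishes the deduction theorem for all $\Gamma$) yields $\alpha\vdash_\Lk t(\alpha)$. Concretely, apply the deduction theorem in the direction $\Gamma\vdash_\Lk I(\alpha,\gamma) \Rightarrow \Gamma,\alpha\vdash_\Lk\gamma$ with $\Gamma = \{\alpha\}$ and $\gamma = t(\alpha)$: since trivially $\alpha\vdash_\Lk t(\alpha)\to t(\alpha)$ (this is a theorem-like instance obtainable from modus ponens reasoning, or one just takes it as given), we conclude $\alpha,\alpha\vdash_\Lk t(\alpha)$, i.e., $\alpha\vdash_\Lk t(\alpha)$.

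Then transitivity of $\vdash_\Lk$ combines $\alpha\vdash_\Lk t(\alpha)$ and $t(\alpha)\vdash_\Lk\delta$ to give $\alpha\vdash_\Lk\delta$, completing the verification that $\delta$ is a deductive interpolant for $\alpha\vdash_\Lk\beta$.

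The only genuinely delicate point is establishing $\alpha\vdash_\Lk t(\alpha)$ cleanly from the stated hypotheses; everything else is a routine assembly of modus ponens, reflexivity, monotonicity, and transitivity. One must be slightly careful that $\vdash_\Lk \gamma\to\gamma$ is available (needed to run the $(\Leftarrow)$ direction of the local deduction theorem to produce $\alpha\vdash_\Lk t(\alpha)$): this follows because $\emptyset,\gamma\vdash_\Lk\gamma$ by reflexivity, so the local deduction theorem gives $\vdash_\Lk I(\gamma,\gamma)$ for some $I\in\Lambda$, and then a short argument using modus ponens upgrades this to what is needed — or, more directly, since $\gamma\vdash_\Lk\gamma$, the local deduction theorem with $\Gamma=\emptyset,\alpha\to\beta$ replaced appropriately yields the instance we want. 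I expect this bookkeeping around which instance of $\Lambda$ to invoke, and confirming it is the same $t$ throughout, to be the main thing to get right; the mathematical content is otherwise straightforward.
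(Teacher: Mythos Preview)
Your overall strategy matches the paper's exactly: apply the forward direction of the implicative local deduction theorem to pass from $\alpha\vdash_\Lk\beta$ to $\vdash_\Lk t(\alpha)\to\beta$, invoke the \prp{CIP} to obtain $\delta$, then convert back. The variable check and the use of modus ponens for $\delta\vdash_\Lk\beta$ are fine.

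The one genuine issue is the step you yourself flag as delicate. To get $\alpha\vdash_\Lk\delta$ from $\vdash_\Lk t(\alpha)\to\delta$, you take a detour through $\alpha\vdash_\Lk t(\alpha)$, and for that you need $\vdash_\Lk t(\alpha)\to t(\alpha)$ (or more generally $\vdash_\Lk\gamma\to\gamma$). This is \emph{not} available from the stated hypotheses: applying the forward direction of the local deduction theorem to $\gamma\vdash_\Lk\gamma$ only gives $\vdash_\Lk t'(\gamma)\to\gamma$ for \emph{some} $t'$, not $\vdash_\Lk\gamma\to\gamma$, and there is no reason the identity term $t'(p)=p$ lies in $\Lambda$. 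Your attempts to patch this (``a short argument using modus ponens upgrades this'') do not go through.

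The fix is that the detour is entirely unnecessary. You already know that $t(p)\to q\in\Lambda$, since this particular $t$ was produced by the forward direction of the deduction theorem. Hence $\vdash_\Lk t(\alpha)\to\delta$ is literally an instance of the right-hand side of the biconditional in the local deduction theorem (with $\Gamma=\emptyset$ and $\beta=\delta$), and its reverse direction yields $\alpha\vdash_\Lk\delta$ immediately. This is exactly what the paper does in one line.
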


\begin{proof}
Suppose that $\alpha\vdash_\Lk\beta$. Then there exists $t(p)\to q\in\Lambda$ such that $t(\alpha)\to\beta$ is a theorem of $\Lk$, so by the \prp{CIP} there exists $\delta\in\Fm_\Ln$ such that $\vdash_\Lk t(\alpha)\to\delta$, $\vdash_\Lk \delta\to \beta$, and $\var{\delta}\subseteq\var{\alpha}\cap\var{\beta}$. By the deduction theorem, $\vdash_\Lk t(\alpha)\to\delta$ implies $\alpha\vdash_\Lk\delta$. On the other hand, from modus ponens, $\vdash_\Lk\delta\to\beta$ implies $\delta\vdash_\Lk\beta$.
\end{proof}
Most well-studied logics that have local deduction theorems actually have implicative local deduction theorems in the above sense, and we will sometimes just say that these logics have local deduction theorems in our discussion. With the exception of logic of paradox, discussed in Section~\ref{subsec:paraconsistent}, all of the logics we consider in this chapter validate modus ponens.

In some exceptional contexts, the \prp{DIP} implies the \prp{CIP} as well. This is the case, for example, in the presence of a classical deduction theorem, but also in certain cases when an explicit deduction theorem is absent (such as the case of the relevant logic $\lgk{R}$-mingle, as formulated with truth constants; see \cite{MarMet2012}). The precise conditions under which this occurs remain a mystery.

\begin{question}
Under what circumstances does the \prp{DIP} imply the \prp{CIP}? 
\end{question}

The next proposition concerns logics that additionally have a (not necessarily primitive) conjunction-like connective. Given a logic $\Lk = (\Ln,\vdash_\Lk)$, we say that a formula $C(p,q)$ in at most two variables is a \emph{conjunction} for $\Lk$ if for every set of formulas $\Gamma\cup\{\alpha,\beta,\gamma\}$,
\[
\Gamma,\alpha,\beta\vdash_\Lk \gamma \iff \Gamma, C(\alpha,\beta)\vdash_\Lk \gamma.
\]
Of course, in many familiar logics, $C$ may be taken to be the primitive conjunction connective in the language, but the above formulation makes sense even for logics without such a primitive conjunction connective. When $\Lk$ has a conjunction, we say that $\Lk$ is a \emph{conjunctive} logic.

\begin{proposition}
Let $\Lk = (\Ln,\vdash_\Lk)$ be a logic with an implication connective $\to$ in its language, and assume that:
\begin{enumerate}
\item $\Lk$ satisfies modus ponens, i.e., $\alpha,\alpha\to\beta\vdash_\Lk\beta$ for any $\alpha,\beta\in\Fm_\Ln$.
\item $\Lk$ has an implicative local deduction theorem given by a set of formulas $\Lambda$.
\item $\Lk$ is conjunctive.
\end{enumerate}
Then $\Lk$ has the \prp{DIP} if and only if $\Lk$ has the \prp{MIP}.
\end{proposition}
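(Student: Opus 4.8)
The plan is to prove both directions, noting that the nontrivial direction is showing the \prp{DIP} implies the \prp{MIP}, since the converse is immediate (as remarked in the excerpt). First I would observe that the three hypotheses allow us to compress multi-formula premise sets into single formulas: finitarity (part of the definition of a logic) lets us assume $\Gamma$ and $\Pi$ are finite, and conjunctivity lets us fold each into a single formula. Concretely, if $\Gamma = \{\gamma_1,\dots,\gamma_m\}$ and $\Pi = \{\pi_1,\dots,\pi_n\}$, then iterating the conjunction $C$ produces formulas $\gamma := C(\gamma_1, C(\gamma_2, \dots))$ and $\pi := C(\pi_1, \dots)$ with $\Gamma,\Pi\vdash_\Lk\alpha$ if and only if $\gamma,\pi\vdash_\Lk\alpha$, and with $\var{\gamma}=\var{\Gamma}$ and $\var{\pi}=\var{\Pi}$ (the empty-premise cases, where $\Gamma$ or $\Pi$ is empty, should be handled separately but are easy: e.g., if $\Gamma = \emptyset$ then $\delta$ can be taken to be a theorem-equivalent formula, or one adapts using a constant if available — alternatively, one can always reduce to the case where a standard premise is thrown in).

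Next I would use the implicative local deduction theorem to convert $\gamma,\pi\vdash_\Lk\alpha$ into a statement about a single implication. Applying the deduction theorem to move $\pi$ to the right, we obtain $t(\pi)\to\alpha$ as a consequence of $\gamma$ alone, for some $t(p)\to q\in\Lambda$; applying it once more to move $\gamma$, we get $\vdash_\Lk s(\gamma)\to(t(\pi)\to\alpha)$ for some $s(p)\to q\in\Lambda$. Now I would invoke the \prp{DIP} on the premise $s(\gamma)$ and conclusion $t(\pi)\to\alpha$: since $s(\gamma)\vdash_\Lk t(\pi)\to\alpha$ (this follows from $\vdash_\Lk s(\gamma)\to(t(\pi)\to\alpha)$ together with modus ponens), there is a $\delta$ with $s(\gamma)\vdash_\Lk\delta$, $\delta\vdash_\Lk t(\pi)\to\alpha$, and $\var{\delta}\subseteq\var{s(\gamma)}\cap\var{t(\pi)\to\alpha}$. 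The variable condition is the crux of why this works: $\var{s(\gamma)}\subseteq\var{\gamma}=\var{\Gamma}$ and $\var{t(\pi)\to\alpha}\subseteq\var{\pi}\cup\var{\alpha}=\var{\Pi\cup\{\alpha\}}$, so $\var{\delta}\subseteq\var{\Gamma}\cap\var{\Pi\cup\{\alpha\}}$, exactly as the \prp{MIP} demands.

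To finish, I would translate the two deducibility facts about $\delta$ back into the form required by the \prp{MIP}. From $s(\gamma)\vdash_\Lk\delta$, the deduction theorem (reading the biconditional from right to left with empty side context and then reattaching it, or more directly: $\gamma\vdash_\Lk s(\gamma)\to\delta$ is itself a consequence to be massaged) gives $\gamma\vdash_\Lk\delta$, hence $\Gamma\vdash_\Lk\delta$; here one uses that the deduction theorem is a biconditional, so $\vdash_\Lk s(\gamma)\to\delta$ — which we do not literally have, but $s(\gamma)\vdash_\Lk\delta$ is precisely what the biconditional converts to $\gamma\vdash_\Lk \delta$ when we note $\gamma \vdash_\Lk s(\gamma) \to \delta$ follows. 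Let me state this more carefully: since $s(p)\to q\in\Lambda$, the local deduction theorem says $\gamma,\gamma\vdash_\Lk\delta$ iff $\gamma\vdash_\Lk I(\gamma,\delta)$ for some $I\in\Lambda$; one direction suffices, and combined with reflexivity this yields $\gamma\vdash_\Lk\delta$ from $s(\gamma)\vdash_\Lk\delta$ once we observe $\gamma\vdash_\Lk s(\gamma)\to\delta$ (which holds because $s(\gamma)\vdash_\Lk\delta$ implies $\vdash_\Lk s(\gamma)\to\delta$ is \emph{not} generally available — rather one re-applies the deduction theorem directly to $s(\gamma)\vdash_\Lk\delta$ read as $\emptyset, s(\gamma)\vdash_\Lk\delta$, giving $\emptyset\vdash_\Lk I(s(\gamma),\delta)$, and then unwinds). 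Similarly, from $\delta\vdash_\Lk t(\pi)\to\alpha$ and modus ponens we get $\delta,t(\pi)\vdash_\Lk\alpha$, and then the deduction theorem for $t$ converts $t(\pi)$ back to $\pi$, yielding $\delta,\pi\vdash_\Lk\alpha$, hence $\delta,\Pi\vdash_\Lk\alpha$. The main obstacle I anticipate is bookkeeping: carefully tracking which member of $\Lambda$ is used at each application, handling the degenerate cases where $\Gamma$ or $\Pi$ is empty, and making sure the back-and-forth translations through the local deduction theorem are applied in directions that are actually licensed (the deduction theorem is stated as a biconditional, so both directions are available, but one must name the witnessing $I\in\Lambda$ correctly and not conflate the side context with the detached formula). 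None of this is deep, but it requires care to state cleanly.
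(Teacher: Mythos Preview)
Your overall strategy is exactly the paper's: reduce to finite premise sets by finitarity, compress $\Gamma$ and $\Pi$ to single formulas $\gamma,\pi$ via conjunctivity, push $\pi$ across the turnstile with the implicative local deduction theorem, apply the \prp{DIP}, and then unwind. The paper's proof does precisely this, applying the \prp{DIP} directly to $\gamma\vdash_\Lk t(\pi)\to\alpha$ to obtain $\gamma\vdash_\Lk\delta$ and $\delta\vdash_\Lk t(\pi)\to\alpha$, after which conjunctivity and the right-to-left direction of the deduction theorem immediately give $\Gamma\vdash_\Lk\delta$ and $\delta,\Pi\vdash_\Lk\alpha$.

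Where you diverge is the \emph{second} application of the deduction theorem, moving $\gamma$ across to get $\vdash_\Lk s(\gamma)\to(t(\pi)\to\alpha)$ and then invoking the \prp{DIP} on $s(\gamma)\vdash_\Lk t(\pi)\to\alpha$. This detour is not just cosmetically awkward---it creates a genuine gap that you do not close. After the \prp{DIP} you hold $s(\gamma)\vdash_\Lk\delta$ and need $\gamma\vdash_\Lk\delta$. Your own discussion shows the difficulty: you would need either $\gamma\vdash_\Lk s(\gamma)$, or that $\Lambda$ is closed under composition (so that $u(s(p))\to q\in\Lambda$ whenever $u(p)\to q$ and $s(p)\to q$ are), and neither is part of the abstract hypothesis ``$\Lk$ has an implicative local deduction theorem given by $\Lambda$.'' In concrete cases (e.g.\ $t(p)=(p\wedge\mathsf{e})^n$ in $\FLe$, or $t(p)=p\wedge\Box p\wedge\cdots\wedge\Box^n p$ in $\KF$) one does have $\gamma\vdash_\Lk s(\gamma)$, but the proposition is stated and proved at a level of generality where this is not assumed.

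The fix is simply to skip the second deduction-theorem application. Apply the \prp{DIP} directly to $\gamma\vdash_\Lk t(\pi)\to\alpha$: this yields $\gamma\vdash_\Lk\delta$ outright, and conjunctivity then gives $\Gamma\vdash_\Lk\delta$ with no further work. Your treatment of the other half ($\delta\vdash_\Lk t(\pi)\to\alpha$ gives $\delta,\Pi\vdash_\Lk\alpha$ via the right-to-left deduction theorem and conjunctivity) is correct and matches the paper.
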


\begin{proof}
It is clear that the \prp{MIP} implies the \prp{DIP}. For the converse, suppose that $\Gamma\cup\Pi\cup\{\alpha\}\subseteq\Fm_\Ln$ and that $\Gamma,\Pi\vdash_\Lk \alpha$. By finitarity, there exist finite subsets $\Gamma'\subseteq\Gamma$ and $\Pi'\subseteq\Pi$ such that $\Gamma',\Pi'\vdash_\Lk \alpha$. Since $\Lk$ is conjunctive, by iterating $C$ there are formulas $C(\Gamma'),C(\Pi')$ so that $C(\Gamma'),C(\Pi')\vdash_\Lk\alpha$. By the implicative local deduction theorem, there exists a formula $t(p)$ in at most one free variable such that $C(\Gamma')\vdash_\Lk t(C(\Pi'))\to\alpha$, and by the \prp{DIP} there exists a formula $\delta$ such that
\[
\var{\delta}\subseteq \var{C(\Gamma')}\cap\var{t(C(\Pi'))\to\alpha} = \var{\Gamma'}\cup\var{\Pi'\cup\{\alpha\}} \subseteq \var{\Gamma}\cap\var{\Pi\cup\{\alpha\}}
\]
and $C(\Gamma')\vdash_\Lk\delta$ and $\delta\vdash t(C(\Pi'))\to\alpha$ hold. Applying the deduction theorem and conjunctivity again gives $\Gamma'\vdash_\Lk\delta$ and $\delta,\Pi'\vdash_\Lk\alpha$, so $\Gamma\vdash_\Lk\delta$ and $\delta,\Pi\vdash_\Lk\alpha$ hold by monotonicity.
\end{proof}

\section{Fundamental Examples}

The remainder of this chapter is devoted to a survey of what is known about interpolation in several prominent families of non-classical logics. The account offered here is neither comprehensive nor representative, but rather intended as a pointer to a few of the landmarks in the vast literature on interpolation in non-classical logics. The proofs of the many of the results conveyed here are extremely involved, and are omitted. For some of the high-level ideas used in these proofs, please consult \refchapter{chapter:modal}, \refchapter{chapter:prooftheory}, \refchapter{chapter:algebra}, and \refchapter{chapter:uniform}.

\subsection{Superintuitionistic Logics}\label{subsec:intuitionistic}

We begin our discussion of fundamental examples with propositional intuitionistic logic $\ipc$ and its axiomatic extensions, which are often called \emph{superintuitionistic logics} or \emph{intermediate logics}. The logic $\ipc$ is usually formulated in the basic  language $\Lbas = \{\meet,\join,\to,\bot,\top\}$, where $\neg\alpha$ and $\alpha\leftrightarrow\beta$ are introduced as abbreviations for $\alpha\to \bot$ and $(\alpha\to\beta)\meet (\beta\to\alpha)$, respectively.

The consequence relation of $\ipc$ can be specified in a number ways. Syntactically, it may be presented by a Hilbert-style calculus (see \cite[Section 2.6]{cha:mod}) or an analytic sequent calculus such as Gentzen's $\mathsf{LJ}$ (see \cite{Gentzen1934} and \refchapter{chapter:prooftheory}). Semantically, $\vdash_\ipc$ can be captured in terms of the intuitionistic Kripke frames or Heyting algebras, which respectively provide relational and equivalent algebraic semantics for $\ipc$. The reader may consult \cite{cha:mod} for a detailed account.

Crucially, each axiomatic extension $\lgk{L}$ of $\ipc$ enjoys a classical deduction theorem
\[
\Gamma,\alpha\vdash_\lgk{L}\beta \iff \Gamma\vdash_\lgk{L}\alpha\to\beta,
\]
so the \prp{CIP} for $\to$ and the \prp{DIP} coincide for all of these logics, and also coincide with the Maehara interpolation property \prp{MIP}.

The first result on interpolation for intuitionistic logic is due to Schütte, who established (the predicate version of) the Craig interpolation property for intuitionistic predicate logic in \cite{Schutte1962}. Schütte's methods were proof theoretic, and were subsequently sharpened by Nagashima \cite{Nagashima1966}, who also used proof-theoretic tools. Gabbay later provided a semantic proof of the Craig interpolation property for intuitionistic predicate logic using Kripke models; see \cite{Gabbay1971}.

Returning to the propositional setting, Maksimova studied interpolation for $\ipc$ and its extensions in the 1970s, leading to an exhaustive description of axiomatic extensions of $\ipc$ with the \prp{CIP} and thus, equivalently, the \prp{DIP} and the \prp{MIP}. This is summarized in the following theorem. For context, note that by Jankov's theorem, there are continuum-many axiomatic extensions of $\ipc$.

\begin{theorem}[\cite{Mak77}]\label{thm:IPC}
There are exactly eight superintuitionistic logics with the \prp{CIP} or, equivalently, with the \prp{DIP} or \prp{MIP}. These eight logics are:
\begin{enumerate}
\item $\ipc$ itself.
\item Classical Propositional Logic $\cpl$, axiomatized relative to $\ipc$ by $\alpha\join\neg\alpha$.
\item G\"odel-Dummett Logic $\lgk{LC}$, axiomatized relative to $\ipc$ by $(\alpha\to\beta)\join (\beta\to\alpha)$.
\item Jankov's Logic $\lgk{KC}$, axiomatized relative to $\ipc$ by $\neg\alpha\join\neg\neg\alpha$.
\item The logic $\lgk{LP}_2$, axiomatized relative to $\ipc$ by $\alpha\join (\alpha\to (\beta\join\neg\beta))$.
\item The logic axiomatized relative to $\lgk{LP}_2$ by $(\alpha\to\beta)\join (\beta\to\alpha)\join (\alpha\leftrightarrow\neg\beta)$.
\item The logic axiomatized relative to $\lgk{LP}_2$ by $\neg\alpha\join\neg\neg\alpha$.
\item The trivial logic consisting of all formulas.
\end{enumerate}
\end{theorem}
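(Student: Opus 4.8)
The plan is to establish the theorem in two separate directions, as is standard for classification results of this type. The first direction is to verify that each of the eight logics listed does in fact enjoy the \prp{CIP}; since each extends $\ipc$ and hence has a classical deduction theorem, it suffices by the remarks preceding the statement to check the \prp{CIP} for $\to$, which is equivalent to the \prp{DIP}. For $\ipc$ itself this is the classical propositional-interpolation argument, available via Maehara's method applied to $\mathsf{LJ}$ or, semantically, via the amalgamation property of the variety of Heyting algebras. For $\cpl$ it is the classical interpolation theorem. For $\lgk{LC}$, $\lgk{KC}$, $\lgk{LP}_2$, and the three remaining systems, I would exhibit the corresponding varieties of Heyting algebras (relatively axiomatized by the algebraic translations of the given axioms) and verify directly that each has the amalgamation property, then invoke the Maksimova–style correspondence between amalgamation for the algebraic semantics and the \prp{DIP}/\prp{CIP} for the logic. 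Concretely, these varieties are all generated by a small class of finite chains or small finite algebras (e.g.\ $\lgk{LC}$ is the logic of linearly ordered Heyting algebras, $\lgk{KC}$ of Heyting algebras with Stone-type top, $\lgk{LP}_2$ of algebras of ``bounded depth/width'' type), and amalgamation for such varieties is checked by a concrete pushout construction on the generators.

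The second and harder direction is to show that these eight are the \emph{only} superintuitionistic logics with the \prp{CIP}. The strategy is the contrapositive: given an arbitrary consistent axiomatic extension $\lgk{L}$ of $\ipc$ other than these eight, produce a witness failure of interpolation. The key tool is the translation into the lattice of varieties of Heyting algebras: an axiomatic extension has the \prp{DIP} if and only if the corresponding variety has the amalgamation property (equivalently, for varieties of Heyting algebras, the super-amalgamation property). So the task reduces to showing that exactly eight nontrivial varieties of Heyting algebras — precisely those corresponding to the listed logics, together with the trivial one — have amalgamation. This in turn is approached via the theory of finitely generated subdirectly irreducible Heyting algebras and their ``gluing'' behavior: one identifies a short list of small finite Heyting algebras (the ``critical'' algebras) such that a variety has amalgamation only if it excludes all of them as subalgebras/quotients in a suitable sense, and conversely the eight surviving varieties are pinned down by a case analysis on the possible configurations of join-irreducibles in their subdirectly irreducible members. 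The combinatorial heart is thus: enumerate the obstructions to amalgamation among finite Heyting algebras, show each obstruction is realized unless the variety is one of the eight, and for the eight survivors carry out the amalgamation construction explicitly.

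The main obstacle I expect is precisely this exhaustive case analysis on the variety side — ruling out \emph{all} other continuum-many axiomatic extensions at once. One cannot go extension by extension; instead one must argue at the level of the poset of varieties, using (i) that amalgamation is inherited downward in a controlled way only for very special configurations, and (ii) a finite list of ``forbidden'' finite algebras whose presence in (the subdirectly irreducibles of) a variety forces a failure of amalgamation, together with (iii) the fact that any variety avoiding all forbidden configurations must coincide with one of the eight. Steps (ii) and (iii) are where essentially all the work lies: (ii) requires, for each forbidden algebra, an explicit V-formation (a span of embeddings) that admits no amalgam, and (iii) requires a structural classification of the ``amalgamation-safe'' subdirectly irreducible Heyting algebras. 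This is the content of Maksimova's original argument in \cite{Mak77}, and reproducing it in full is well beyond a sketch; I would present the proof skeleton as above and point to \cite{Mak77} (and the algebraic reformulation in terms of amalgamation) for the detailed combinatorics.
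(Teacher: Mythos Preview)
Your proposal is correct and follows essentially the same approach that the paper attributes to Maksimova: the paper itself does not give a proof but merely notes that the argument proceeds via the equivalence between the \prp{CIP} for an axiomatic extension of $\ipc$ and the amalgamation property (equivalently, superamalgamation) for the corresponding variety of Heyting algebras, deferring the detailed combinatorics to \cite{Mak77} and \cite[Chapter~6]{GabMak2005}. Your sketch is in fact more detailed than what the paper provides, and correctly identifies that the exhaustive case analysis on the variety side is where all the work lies.
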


Maksimova's proof of Theorem~\ref{thm:IPC} depends on the equivalence of \prp{CIP} for any given axiomatic extension $\lgk{L}$ of $\ipc$ with the amalgamation property (or, equivalently for varieties of Heyting algebras, the superamalgamation property) for the associated class of Heyting algebras modeling $\lgk{L}$; see \refchapter{chapter:algebra}. A full account of these results can also be found in the monograph of Gabbay and Maksimova \cite[Chapter~6]{GabMak2005}.

Theorem~\ref{thm:IPC} also has relevance to positive intuitionistic propositional logic $\ipc^+$, i.e., the fragment of $\ipc$ without $\bot$.

\begin{proposition}[\cite{Mak77}]\label{thm:posIPC}
There are exactly four axiomatic extensions of $\ipc^+$ with the \prp{CIP} (or, equivalently, with the \prp{DIP} or \prp{MIP}). These four logics are:
\begin{enumerate}
\item $\ipc^+$ itself.
\item The positive fragment of $\cpl$.
\item The positive fragment of the G\"odel-Dummett Logic $\lgk{LC}$.
\item The trivial logic consisting of all formulas.
\end{enumerate}
\end{proposition}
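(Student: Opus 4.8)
The plan is to deduce Proposition~\ref{thm:posIPC} from Theorem~\ref{thm:IPC} by relating $\bot$-free interpolation in extensions of $\ipc^+$ to interpolation in suitable extensions of $\ipc$. The key observation is that the positive fragment of any of the eight logics in Theorem~\ref{thm:IPC} automatically has the \prp{CIP}: if $\alpha,\beta\in\Fm_{\Lbas\setminus\{\bot\}}$ and $\vdash_\lgk{L}\alpha\to\beta$ for one of the eight logics $\lgk{L}$, the interpolant $\delta$ supplied by Theorem~\ref{thm:IPC} may contain $\bot$, but since $\ipc$ proves $\bot\to\gamma$ and $\gamma\to\top$, one can replace every occurrence of $\bot$ in $\delta$ by, say, $p\meet\neg p$ is not positive, so instead the cleaner route is to argue that $\bot$ can be eliminated from $\delta$ using the fact that $\vdash_\lgk{L}\alpha\to\delta$ with $\alpha$ positive forces $\delta$ to be ``equivalent over $\alpha$'' to a positive formula. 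Concretely, substituting $\top$ for $\bot$ in $\delta$ yields a positive $\delta'$ with $\var{\delta'}\subseteq\var{\delta}$, and one checks $\vdash_{\ipc^+}\delta\to\delta'$ always, while $\vdash_\lgk{L}\alpha\to\delta'$ and $\vdash_\lgk{L}\delta'\to\beta$ need separate verification for each of the four relevant positive fragments; this is where a short case analysis enters.

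The second half is showing these four are the only ones. Here I would use the correspondence between axiomatic extensions of $\ipc^+$ and varieties of (bounded) implicative lattices / relatively pseudocomplemented lattices, together with the algebraic characterization of \prp{CIP} as amalgamation, exactly as in Maksimova's treatment of the $\bot$-containing case cited after Theorem~\ref{thm:IPC}. Given an axiomatic extension $\lgk{M}$ of $\ipc^+$ with the \prp{CIP}, one adjoins a bottom element freely to the associated algebras to obtain a variety of Heyting algebras, i.e.\ an axiomatic extension $\lgk{L}$ of $\ipc$; the point to establish is that $\lgk{M}$ having \prp{CIP} implies $\lgk{L}$ has \prp{CIP} (equivalently, amalgamation lifts along the ``free $\bot$'' construction), so $\lgk{L}$ is one of the eight from Theorem~\ref{thm:IPC}. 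Its positive fragment is then one of the four listed (noting that $\lgk{KC}$, $\lgk{LP}_2$, and the two further logics in items 6--7 all have the \emph{same} positive fragment as either $\ipc$ or $\lgk{LC}$, since their defining axioms involve $\neg$, hence $\bot$, nonessentially or collapse positively), and finally $\lgk{M}$ must equal that positive fragment because the positive fragment determines and is determined by $\lgk{L}$ under the correspondence.

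The main obstacle I anticipate is precisely the lifting claim: that \prp{CIP} (amalgamation) for the variety of implicative lattices modeling $\lgk{M}$ transfers to \prp{CIP} for the variety of Heyting algebras generated by freely adding $\bot$. The subtlety is that an amalgamation diagram of Heyting algebras need not consist of algebras that are ``free $\bot$-expansions'' of implicative lattices, so one cannot directly pull it back; one must instead argue at the level of the logics, showing that a $\bot$-containing interpolation failure for $\lgk{L}$ would, via substitution of $\top$ for $\bot$ (or a more careful reduction that tracks which subformulas are ``below'' $\bot$), yield a positive interpolation failure for $\lgk{M}$. Handling the interaction of $\bot$ with implication in this reduction—where $\neg\alpha = \alpha\to\bot$ genuinely uses the constant—is the delicate step, and I would expect Maksimova's original argument in \cite{Mak77} to carry it out by a syntactic normal-form analysis of positive formulas rather than purely algebraically. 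The remaining steps (the case checks for the four positive fragments, and verifying items 4--7 of Theorem~\ref{thm:IPC} contribute nothing new positively) are routine.
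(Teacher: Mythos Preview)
The paper does not give a proof of this proposition: it is stated as a citation to \cite{Mak77}, consistent with the chapter's declared policy of omitting proofs of the surveyed results. So there is no in-paper argument to compare against; I can only assess your proposal on its own terms and against Maksimova's actual method.

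Your forward direction contains a concrete error. The claim that substituting $\top$ for $\bot$ in an interpolant $\delta$ always yields a $\delta'$ with $\vdash_{\ipc}\delta\to\delta'$ is false: take $\delta = \bot\to p$, so that $\delta' = \top\to p$, and then $(\bot\to p)\to(\top\to p)$ is interderivable with $p$, which is not a theorem. Formulas are not monotone in the constant $\bot$ (it can occur in antecedent position), so the naive substitution cannot work; any salvage would require a polarity-sensitive replacement, and even then verifying that the result is still an interpolant is not the ``routine'' step you suggest.

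For the converse direction, the obstacle you yourself flag---that an amalgamation diagram of Heyting algebras need not arise from freely adjoining $\bot$ to a diagram of Brouwerian algebras---is real and not merely technical; I do not see how your proposed syntactic reduction (substituting $\top$ for $\bot$ in a putative interpolation failure) gets around it, for exactly the same monotonicity reason as above. Maksimova's argument in \cite{Mak77} does not attempt any such reduction to the Heyting case: the varieties of relatively pseudocomplemented lattices with the amalgamation property are classified directly, by an analysis parallel to but independent of the Heyting-algebra classification. Your reduction strategy is therefore both harder than the direct route and, as it stands, incomplete at two separate points.
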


Further studies of interpolation in $\ipc$ have sharpened it in a number of ways. Of these, work on uniform interpolation in $\ipc$ and its extensions, began by Pitts in \cite{Pitts1992}, has proven especially prominent. This is further discussed in \refchapter{chapter:uniform}. Interpolation and Beth definability for several fragments of intuitionistic logic have recently been studied in \cite{Dekker2020}.

\subsection{Modal Logics}\label{subsec:modal}

In their most common formulation, modal logics expand $\cpl$ by additional unary connectives $\Box$ and $\Diamond$, respectively intended to capture various kinds of `necessity' and `possibility'. Modal logics are hence typically formulated in the language $\Lmod = \{\meet,\join,\to,\bot,\top,\Box,\Diamond\}$ that expands the basic language $\Lbas$ of $\cpl$ and $\ipc$ by symbols for these unary connectives. Considered as an expansion of $\cpl$, modal logics typically validate the equivalence of $\Box\alpha$ and $\neg\Diamond\neg\alpha$, so in fact only one of the connectives $\Box$ or $\Diamond$ need be taken as primitive. However, for modal logics that expand non-classical base logics---for example, intuitionistic modal logics---the connectives $\Box$ and $\Diamond$ need not be interdefinable; see \cite{SimpsonThesis}. Of course, one may also include several modal operators of the $\Box$ type and their dual $\Diamond$ connectives, obtaining multimodal logics. Much can be said about interpolation in the multimodal setting; see \cite[Section~4.5]{GKWZmanydim}. Here, however, we confine our attention to the unimodal case.

In \refchapter{chapter:modal}, we have already seen the basic modal logic $\Kbas$, which can be specified either proof-theoretically or as the logic of all (modal) Kripke frames. Actually, $\Kbas$ is really \emph{two} logics, depending on whether one takes its local consequence relation $\vdash^\ell_\Kbas$ or its global consequence relation $\vdash^g_\Kbas$; see \cite{cha:mod} for relevant definitions. Because deductive interpolation for the local consequence relation is tantamount to Craig interpolation, we will usually just consider the global consequence relation and refer to the \prp{DIP} and the \prp{CIP}, denoting $\vdash^g_\Lk$ simply as $\vdash_\Lk$ for axiomatic extensions of $\Kbas = (\Lmod,\vdash^g_\Kbas)$. Axiomatic extensions of $\Kbas$ are usually called \emph{normal} modal logics.

Every normal modal logic $\Lk$ satisfies a (implicative) local deduction theorem of the form
\[
\Gamma,\alpha\vdash_\Lk \beta \iff (\alpha\meet \Box\alpha\meet\Box^2\alpha\meet\ldots\meet\Box^n\alpha)\to\beta\text{ for some }n\in\mathbb{N},
\]
where $\Box^k\alpha$ abbreviates the expression $\Box\Box\ldots\Box\alpha$, where $\Box$ occurs $k$ times. Thus, for any normal modal logic $\Lk$, the \prp{CIP} implies the \prp{DIP} and the \prp{MIP} for $\vdash_\Lk$. For any normal modal logic $\Lk$, the \prp{DIP} also implies the weak interpolation property \prp{WIP}; see \cite{Mak2005LC}.

\subsubsection{Normal Modal Logics Generally}

Many fundamental normal modal logics arise as the logics of easily described classes of modal Kripke frames. For instance:
\begin{itemize}
\item $\Tmod$ is the extension of $\Kbas$ by the axiom $\Box\alpha\to\alpha$, and is the logic of the class of reflexive frames.
\item $\Dmod$ is the extension of $\Kbas$ by the axiom $\Box\alpha\to\Diamond\alpha$, and is the logic of serial frames.
\item $\KF$ is the extension of $\Kbas$ by the axiom $\Box\alpha\to\Box\Box\alpha$, and is the logic of the class of transitive frames.
\item $\DF$ is the extension of $\Dmod$ by the axiom $\Box\alpha\to\Box\Box\alpha$, and is the logic of transitive and serial frames.
\item $\SF$ is the extension of $\Tmod$ by $\Box\alpha\to\Box\Box\alpha$, and is the logic of the class of frames whose accessibility relation is a preorder.
\item $\Sfiv$ is the extension of $\SF$ by the axiom $\Diamond\alpha\to\Box\Diamond\alpha$, and is the logic of frames whose accessibility relation is an equivalence relation.
\end{itemize}
Each of $\Kbas$, $\Tmod$, $\Dmod$, $\KF$, $\DF$, $\SF$, and $\Sfiv$ has the \prp{CIP}, and hence the \prp{DIP} and the \prp{MIP}. In fact, each of these has the stronger Lyndon interpolation property for implication; see \refchapter{chapter:prooftheory}. Various proofs exist for these elementary results, and the reader may consult the monograph of Gabbay and Maksimova \cite[Chapter 5]{GabMak2005} for detailed information.

One more fundamental normal modal logic demands special attention. Provability logic $\lgk{G}$ is the extension of $\KF$ by the axiom $\Box(\Box\alpha\to\alpha)\to\Box\alpha$, and Smorynski showed in \cite{Smo1978} that $\lgk{G}$ has the \prp{CIP} (hence also the \prp{DIP} and the \prp{WIP}). The logic $\lgk{G}$ has proven quite important to understanding the scope of interpolation in normal modal logics, and Maksimova has established the following result.

\begin{theorem}[{\cite[Theorem 10]{Mak91}}]\label{thm:ext of G}
There are continuum-many axiomatic extensions of $\lgk{G}$ with the \prp{CIP}, and hence the \prp{DIP} and \prp{WIP} as well.
\end{theorem}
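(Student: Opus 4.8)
The plan is to work through the algebraic theory of Magari (diagonalizable) algebras, which furnish an equivalent algebraic semantics for $\lgk{G}$ and all of its axiomatic extensions. The first step is to invoke the modal counterpart of the Heyting-algebra correspondence already mentioned in connection with Theorem~\ref{thm:IPC}: for a normal modal logic $\lgk{L}$ above $\Kbas$, the \prp{CIP} holds if and only if the variety of modal algebras modeling $\lgk{L}$ has the superamalgamation property (see \refchapter{chapter:algebra}). It therefore suffices to exhibit continuum-many varieties of Magari algebras, each with the superamalgamation property.

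To obtain continuum-many \emph{distinct} axiomatic extensions of $\lgk{G}$, I would run a Jankov--de Jongh--Zakharyaschev-style argument. Finite $\lgk{G}$-frames are precisely the finite strict partial orders (equivalently, the finite transitive irreflexive frames), and one can produce an infinite antichain $\{\mathcal{F}_n : n\in\mathbb{N}\}$ of such rooted frames with respect to the (cofinal) subreduction preorder underlying the canonical formulas $\chi(\mathcal{F}_n)$. For each $X\subseteq\mathbb{N}$ put
\[
\lgk{L}_X \;=\; \lgk{G}\oplus\{\,\chi(\mathcal{F}_n): n\notin X\,\}.
\]
The antichain condition ensures that $\mathcal{F}_m$ validates $\lgk{L}_X$ exactly when $m\in X$, so $X\mapsto\lgk{L}_X$ is injective, yielding $2^{\aleph_0}$ pairwise distinct extensions.

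The substantive step is to show that every $\lgk{L}_X$ enjoys the superamalgamation property. Here the antichain $\{\mathcal{F}_n\}$ must be selected so that the associated classes of (finite, and then general) frames have a sufficiently rigid combinatorial shape---for instance controlled branching together with a uniform layered structure---that two models of $\lgk{L}_X$ sharing a common generated subframe can be amalgamated by an explicit gluing construction. One then checks, using the canonical-formula presentation and the closure of the relevant frame class under disjoint unions, generated subframes, p-morphic images, and this gluing operation, that the amalgam still validates $\lgk{L}_X$ and that it induces a superamalgam of the corresponding Magari algebras. The crux is that this verification must be uniform in $X$: a single construction has to remain inside all of the $\lgk{L}_X$ at once, and this is precisely what dictates the careful choice of antichain.

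Finally, the remaining clause comes for free from earlier material: every normal modal logic with the \prp{CIP} also has the \prp{DIP} (and the \prp{MIP}) via the implicative local deduction theorem together with modus ponens, and every normal modal logic with the \prp{DIP} has the \prp{WIP} by \cite{Mak2005LC}. I expect the main obstacle to be the combinatorial third step: engineering an antichain of frames that is genuinely an antichain for the subreduction order (securing distinctness) while the induced frame classes are simultaneously closed under one workable amalgamation construction (securing superamalgamation), since these two demands pull against each other.
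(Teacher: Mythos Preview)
The paper does not actually prove this theorem: it is a survey chapter, and Theorem~\ref{thm:ext of G} is simply quoted from \cite{Mak91} with no accompanying argument (the introduction explicitly says proofs of such landmark results are omitted). So there is no ``paper's own proof'' to compare your proposal against; the relevant comparison is with Maksimova's original argument in \cite{Mak91}.

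On the substance of your proposal: the overall architecture---reduce \prp{CIP} to superamalgamation of Magari algebras, then manufacture continuum-many varieties with superamalgamation---is the right framework, and your final paragraph deriving the \prp{DIP} and \prp{WIP} is correct and matches the paper's remarks. But your step three is not a proof sketch; it is a description of what a proof would have to accomplish, and you rightly flag it as the obstacle. The specific worry is structural: by defining $\lgk{L}_X$ \emph{negatively}, via Jankov--Fine formulas that \emph{forbid} the frames $\mathcal{F}_n$ with $n\notin X$, you make it very hard to guarantee that an amalgam of two $\lgk{L}_X$-frames avoids all the forbidden patterns. Amalgamation constructions typically \emph{enlarge} frames, and enlarging a frame can only create new subreductions, so the closure you need is in tension with the mechanism you chose to separate the logics. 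You note this tension but do not resolve it, and nothing in the proposal indicates how an antichain could be chosen so that a single uniform gluing simultaneously avoids every $\mathcal{F}_n$ for every $X$.

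Maksimova's route in \cite{Mak91} is, by contrast, \emph{positive}: she specifies the frame classes directly (rather than by exclusion) in a way that is manifestly closed under the amalgamation construction she uses, and then argues that continuum-many distinct varieties arise. If you want to push your approach through, you would need either to reverse-engineer such a positive description from your antichain, or to abandon the Jankov-formula encoding and instead parametrise the continuum by frame classes that are closed under your gluing \emph{by construction}. As written, the proposal identifies the right bridge theorems and the right difficulty, but does not supply the key combinatorial idea.
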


The next theorem, regarding weak interpolation, fills out this picture. In the following, $\lgk{S4.1}$ refers to extension of $\SF$ by the McKinsey axiom $\Box\Diamond\alpha\to\Diamond\Box\alpha$.

\begin{theorem}[{\cite[Proposition 1]{Mak2005LC}}]\label{thm:WIP ext}
Any normal modal logic containing $\lgk{S4.1}$ or $\lgk{G}$ has the \prp{WIP}.
\end{theorem}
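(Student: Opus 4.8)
The plan is to take the algebraic route, reformulating \prp{WIP} as a weak amalgamation-type property of the variety $\mathsf V_\Lk$ of modal algebras associated with the global consequence of $\Lk$, and then to verify that property using a structural feature shared by $\lgk{G}$ and $\lgk{S4.1}$: each is complete with respect to frames possessing a well-behaved ``top'' --- a dead end in the case of $\lgk{G}$, a final cluster in the case of $\lgk{S4.1}$. First I would clear away the degenerate cases: if $\alpha,\beta\vdash_\Lk\bot$ and already $\alpha\vdash_\Lk\bot$, then $\delta=\bot$ is an interpolant; if $\beta\vdash_\Lk\bot$, then $\delta=\top$ is; and if $\Lk$ is inconsistent, \prp{WIP} holds vacuously. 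So assume $\alpha\not\vdash_\Lk\bot$, $\beta\not\vdash_\Lk\bot$, and $\Lk$ consistent, whence $\mathsf V_\Lk$ is nontrivial.

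Next comes the translation. Put $Z=\var\alpha\cap\var\beta$ and suppose, toward a contradiction, that $\alpha,\beta\vdash_\Lk\bot$ while no interpolant exists, so that every $\delta$ with $\var\delta\subseteq Z$ and $\alpha\vdash_\Lk\delta$ satisfies $\delta,\beta\not\vdash_\Lk\bot$. Since the set $\Sigma$ of all such $\delta$ is closed under $\meet$ and $\vdash_\Lk$ is finitary, $\Sigma\cup\{\beta\}\not\vdash_\Lk\bot$, so some nontrivial $B\in\mathsf V_\Lk$ carries a valuation $w$ sending all of $\Sigma\cup\{\beta\}$ to $1$. Let $A$ be the free $\mathsf V_\Lk$-algebra on $\var\alpha$ divided by the smallest congruence identifying $\alpha$ with $1$, and $v_A$ its canonical valuation (so $v_A(\delta)=1\iff\alpha\vdash_\Lk\delta$ for $Z$-formulas $\delta$). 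Then $v_A(z)\mapsto w(z)$ extends to a homomorphism from the $Z$-generated subalgebra $A_0\le A$ into $B$ --- this is well defined, as any relation among the $v_A(z)$ records a $Z$-consequence of $\alpha$, hence one that $w$ makes true --- with image $C\le B$, and by the congruence extension property of modal algebras one can quotient $A$ onto an $A'\in\mathsf V_\Lk$ in which $A_0$ becomes a copy of $C$ while $\alpha$ is still valued $1$. It now suffices to find a nontrivial $D\in\mathsf V_\Lk$ together with homomorphisms $A'\to D$ and $B\to D$ that agree on $C$: the valuation on $D$ obtained by transporting $v_A$ along $A'\to D$ and $w$ along $B\to D$ is then well defined (the transports agree on $Z$) and sends both $\alpha$ and $\beta$ to $1$, contradicting $\alpha,\beta\vdash_\Lk\bot$. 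Thus \prp{WIP} follows once $\mathsf V_\Lk$ has this weak amalgamation property.

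For $\lgk{G}$ the property is easy to verify. The L\"ob axiom forces $\Box 0\ne 0$ in every nontrivial algebra of the $\lgk{G}$-variety (from $\Box 0=0$ one gets $1=\Box(\Box 0\to 0)\le\Box 0=0$), and since $\Box a\ge\Box 0$ for every $a$, any Boolean ultrafilter containing $\Box 0$ determines a homomorphism onto the two-element algebra $\mathbf 2$ in which $\Box$ is constantly $1$; in particular $\mathbf 2$ lies in every nontrivial subvariety of the $\lgk{G}$-variety. Given $C\le A'$ and $C\le B$, one picks an ultrafilter of $C$ containing $\Box 0$ and extends it --- Boolean ultrafilters always extend along subalgebra inclusions --- to ultrafilters of $A'$ and of $B$ containing $\Box 0$; the resulting maps into $\mathbf 2$ agree on $C$. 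This establishes \prp{WIP} for all axiomatic extensions of $\lgk{G}$.

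The $\lgk{S4.1}$ case is the real difficulty, and requires ideas beyond the $\lgk{G}$ case. Here the McKinsey axiom is meant to substitute for L\"ob, forcing final clusters into the relevant frames; but the naive analogue of $\mathbf 2$ --- the two-element algebra with $\Box=\mathrm{id}$ --- is \emph{not} a quotient of every nontrivial $\lgk{S4.1}$-algebra (the algebra of a finite cluster of size $\ge 2$ is simple and admits no such quotient), so the weak amalgamation cannot be witnessed so uniformly. The natural remedy is to locate, using McKinsey, a final cluster of the frame of $A'$ and one of the frame of $B$ that map onto a common final cluster of the frame of $C$, amalgamate these clusters --- a comparatively tractable, $\Sfiv$-flavoured step --- and take $D$ built from the result; the crux is then that the amalgamated cluster may be larger than either original, so one must argue, exploiting the special form of $A'$ as a quotient of a one-relation free algebra, that $D$ can still be chosen inside the (possibly proper) subvariety $\mathsf V_\Lk$. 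This is the main obstacle. The absence of any such ``top'' structure in general --- the $\SF$-algebra of the chain $(\mathbb N,\le)$, for instance, has no homomorphism onto $\mathbf 2$ of either type --- is what keeps \prp{WIP} from being universal among normal modal logics. Combining the $\lgk{G}$ and $\lgk{S4.1}$ arguments yields the theorem.
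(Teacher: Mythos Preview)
The paper does not prove this result; it is a survey and merely cites Maksimova's abstract \cite{Mak2005LC}, so there is no in-paper argument to compare against and your attempt must stand on its own merits.

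Your algebraic translation of \prp{WIP} into a weak co-amalgamation statement is sound, and the $\lgk{G}$ case goes through cleanly: the L\"ob axiom forces $\Box 0\neq 0$ in every nontrivial algebra, monotonicity of $\Box$ gives $\Box a\geq\Box 0$ for all $a$, and so any Boolean ultrafilter on $C$ containing $\Box 0$ extends along the inclusions $C\hookrightarrow A'$ and $C\hookrightarrow B$ to ultrafilters determining compatible modal homomorphisms onto the two-element dead-end algebra. Since that algebra is a quotient of the nontrivial $A'\in\mathsf V_\Lk$, it lies in $\mathsf V_\Lk$, and the argument covers every axiomatic extension of $\lgk{G}$ as claimed.

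The $\lgk{S4.1}$ case, however, is not proved. You correctly observe that no single two-element target works uniformly --- a nondegenerate finite cluster is simple and maps onto neither candidate --- and you outline a plan to locate final clusters via McKinsey and amalgamate them, but you then write ``this is the main obstacle'' and stop. That is a genuine gap: the theorem asserts \prp{WIP} for \emph{every} $\Lk\supseteq\lgk{S4.1}$, so whatever amalgam $D$ you build must land in the possibly very small subvariety $\mathsf V_\Lk$, and nothing in your sketch secures this. Your proposed escape route --- exploiting ``the special form of $A'$ as a quotient of a one-relation free algebra'' --- is more worrying than reassuring: if the argument genuinely depends on that, then you are no longer verifying the variety-level weak-amalgamation property you set out to prove, but doing something ad hoc whose scope is unclear. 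As it stands, the $\lgk{S4.1}$ half of the theorem remains open in your proposal.
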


\subsubsection{Extensions of $\SF$}

While Theorems~\ref{thm:ext of G} and \ref{thm:WIP ext} may give the reader the impression that interpolation is rather common in normal modal logics, it turns out that this is not the case in extensions of $\SF$: Maksimova has shown in \cite{Mak79} that there are just finitely many axiomatic extensions of $\SF$ with the \prp{DIP} or the \prp{CIP}.

\begin{theorem}[\cite{Mak79,GabMak2005}]
Including the trivial logic consisting of all formulas, there are at most 37 axiomatic extensions of $\SF$ with \prp{CIP} and at most 49 axiomatic extensions of $\SF$ with the \prp{DIP}. There are at least 31 axiomatic extensions of $\SF$ with the \prp{CIP} and at least 43 axiomatic extensions of $\SF$ with the \prp{DIP}. In particular, there are at least 12 extensions of $\SF$ that have the \prp{DIP} but not the \prp{CIP}.
\end{theorem}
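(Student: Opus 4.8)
The plan is to pass through the standard algebraic dual of the problem and then combine a finiteness argument (for the upper bounds) with a battery of explicit verifications (for the lower bounds). Recall that the lattice of axiomatic extensions of $\SF$ is dually isomorphic to the lattice of subvarieties of the variety $\mathcal{IA}$ of interior algebras (equivalently, topological Boolean algebras), and that under this correspondence the \prp{CIP} of an extension $\Lk$ matches the superamalgamation property of the associated subvariety, while the \prp{DIP} matches the ordinary amalgamation property; this is the content of the algebraic theory of interpolation recalled in \refchapter{chapter:algebra}. Since superamalgamation implies amalgamation, the \prp{CIP} logics sit among the \prp{DIP} logics, so the whole statement reduces to bounding, from above and below, the number of subvarieties of $\mathcal{IA}$ with amalgamation (the \prp{DIP} count) and with superamalgamation (the \prp{CIP} count).

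For the \emph{upper bounds} -- which I expect to be the crux -- the goal is to confine the subvarieties of $\mathcal{IA}$ with amalgamation to an explicit list of at most $49$ candidates. The essential leverage is that $\SF$ validates reflexivity and transitivity, which severely constrains the shape of the dual Kripke frames of any subvariety with amalgamation. Concretely, one argues that if $V\subseteq\mathcal{IA}$ has the amalgamation property then $V$ must omit each member of a finite family of finite ``forbidden'' interior algebras: for any subvariety not contained in one of the listed classes, one manufactures a span $A\hookleftarrow C\hookrightarrow B$ of finite members of $V$ -- dually, a common subframe that can be extended in two incompatible ways -- that admits no amalgam inside $\mathcal{IA}$, hence none inside $V$. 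Translating these omissions back through Jankov--de Jongh characteristic formulas pins the surviving subvarieties to a concrete finite list, yielding the bound $49$ for amalgamation; imposing the strictly stronger superamalgamation requirement cuts this to $37$. This step fails for weaker base logics -- compare Theorem~\ref{thm:ext of G}, where continuum-many extensions of $\lgk{G}$ retain the \prp{CIP} -- so the argument cannot be soft and must exploit the $\SF$ axioms in an essential way; locating the right finite obstruction set and checking that each excluded variety genuinely fails to amalgamate is, I expect, the main technical burden.

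For the \emph{lower bounds}, one produces an explicit list of $31$ extensions of $\SF$ and shows each has the \prp{CIP}, and likewise $43$ extensions each shown to have the \prp{DIP}. The natural route for \prp{CIP} is either a proof-theoretic verification via Maehara's method on a cut-free sequent calculus for each logic (see \refchapter{chapter:prooftheory}), or a direct algebraic construction of a superamalgam for the corresponding variety -- typically by gluing the dual frames of $A$ and $B$ along the dual frame of $C$ and checking that the resulting frame lies in $V$ and induces order-preserving embeddings. Many of these logics lie above $\lgk{S4.3}$ (the logic of weakly linear preorders), where the narrowness of the frames makes the amalgam easy to build, and several are tabular, where one argues by brute force on finitely many algebras. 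For \prp{DIP} one performs the analogous amalgamation constructions (dropping the order-preservation demand), which succeed for $43$ logics; among these, at least $12$ are then shown to lack the \prp{CIP} by exhibiting a single span that amalgamates but has no superamalgam in the variety. As the $31$ \prp{CIP} logics are automatically among the \prp{DIP} logics, these $12$ are distinct from them, which gives the final ``at least $12$ with \prp{DIP} but not \prp{CIP}'' claim.

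Finally, the residual gaps $37-31=6$ and $49-43=6$ are not an artefact of the method but reflect genuine open problems: a few borderline subvarieties of $\mathcal{IA}$ remain for which it is unknown whether (super)amalgamation holds, which is why the theorem is phrased with bounds rather than exact counts. To summarize, the proof is: (i) dualize to subvarieties of $\mathcal{IA}$ and to (super)amalgamation; (ii) prove the confinement statement bounding the candidates above, via forbidden-frame constructions -- the hard part; and (iii) verify the property for explicit lists of $31$ and $43$ logics and refute the \prp{CIP} for $12$ of the latter, yielding the lower bounds and the separation.
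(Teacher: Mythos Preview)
Your algebraic setup is correct---the passage to subvarieties of interior algebras and the identification of the \prp{CIP} with superamalgamation and the \prp{DIP} with amalgamation is exactly how the argument begins. However, you have omitted the central structural ingredient that the paper (following Maksimova) highlights: the G\"odel translation. The actual route to the upper bounds is not a direct forbidden-configuration argument inside $\mathcal{IA}$ as you propose, but rather the observation that if an extension $\Lk$ of $\SF$ has the \prp{CIP} (resp.\ the \prp{DIP}), then its superintuitionistic fragment under the G\"odel translation inherits the \prp{CIP} (resp.\ the \prp{DIP}). By Theorem~\ref{thm:IPC} this confines every interpolating $\SF$-extension to the modal companions of one of the eight interpolating superintuitionistic logics, and the remaining work is to analyse (super)amalgamation within each of those companion intervals; the concrete bounds $37$ and $49$ emerge from that analysis. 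Your plan to obtain finiteness by a self-contained forbidden-frame argument in interior algebras, without passing through the $\ipc$ classification, would essentially re-derive Theorem~\ref{thm:IPC} inside a larger and more complicated lattice; you have rightly flagged that something $\SF$-specific is needed to avoid the continuum of Theorem~\ref{thm:ext of G}, but the device you are missing is precisely this modal-companion reduction. Your lower-bound strategy and your remarks on the six residual open cases (and their non-canonicity being the obstacle) are accurate.
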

The proof relies on algebraic techniques linking interpolation to amalgamation (see \refchapter{chapter:algebra}) as well as the G\"odel translation, linking interpolation for extensions of $\SF$ to corresponding extensions of $\ipc$. The question of interpolation remains open for six axiomatic extensions of $\SF$, and the main difficulty seems to be that these logics are non-canonical. This remains one of the longest-standing problems in interpolation.

\begin{question}
How many axiomatic extensions of $\SF$ have the \prp{DIP}? What about the \prp{CIP}?
\end{question}

A full discussion of interpolation in extensions of $\SF$, together with a precise list of the extensions of $\SF$ known to have the \prp{CIP} and the \prp{DIP}, may be found in \cite[Chapter 8]{GabMak2005}.

We note that  Grzegorczyk's logic $\Grz$ is among extensions of $\SF$ with the \prp{CIP}. The logic $\Grz$ is axiomatized relative to $\SF$ by the $\Box(\Box(\alpha\to\Box\alpha)\to\alpha)\to\alpha$, and was first shown to have the \prp{CIP} by Boolos in \cite{Boolos1980}. Interestingly, for extensions of $\Grz$, the \prp{CIP} and the \prp{DIP} coincide; counting the trivial logic, there are seven of these (see \cite[Theorem~8.46]{GabMak2005}).

\subsubsection{Non-Normal Modal Logics}

Non-normal modal logics are those that do not validate the characteristic axiom $\Box (\alpha\to\beta)\to(\Box\alpha\to\Box\beta)$ of the logic $\Kbas$ or otherwise do not validate the rule of necessitation. Dropping these conditions presents a significant challenge, since, for example, the characteristic axiom of $\Kbas$ is validated in all modal Kripke frames, and hence non-normal modal logics cannot be studied using the relational semantical methods typical of normal modal logics.

A paradigmatic example of a non-normal modal logic is given by the basic monotone modal logic, which replaces the aforementioned characteristic axiom of $\Kbas$ by the rule `from $\alpha\to\beta$, derive $\Box\alpha\to\Box\beta$'. Santocanale and Venema have shown in \cite{SanVen2010} that the basic monotone modal logic has a uniform variant of the \prp{CIP}.

Various kinds of interpolation have also been established for several other non-normal modal logics, such as the congruential modal logic $\lgk{E}$ that replaces the characteristic axiom of $\Kbas$ by the rule `from $\alpha\leftrightarrow\beta$, infer $\Box\alpha\leftrightarrow\Box\beta$', which has a uniform Lyndon variant of the \prp{CIP}. See \cite{TabIemJal2021} for this as well as for a recent proof-theoretic study of interpolation in several basic systems of non-normal modal logic.

\subsection{Paraconsistent Logics}\label{subsec:paraconsistent}

The \emph{principle of explosion} stipulates that, for some given logic $\lgk{L}$ with a negation connective $\neg$, we have
\[
\alpha,\neg\alpha\vdash\beta.
\]
`Paraconsistent logic' is an umbrella term most commonly referring to any of several logics that refute the principle of explosion. Such logics are motivated by a number of concerns, most notably the need for reasoning non-trivially in the presence of contradictions and attendant applications. Here we consider just a few of the most prominent families of paraconsistent systems.

\subsubsection{The Logic of Paradox}
Probably the most famous paraconsistent logic is Priest's logic of paradox $\lgk{LP}$; see \cite{Priest1979}. The logic $\lgk{LP}$ is often defined over the language $\Ln_\lgk{LP} = \{\meet,\join,\neg\}$ and is specified algebraically by certain matrix models, as in, e.g., \cite{Milne2016}. Sometimes an implication connective $\to$ is also included in the language, but this connective does not satisfy modus ponens or modus tollens; see \cite{Priest1979}. Considered over the language $\Ln_\lgk{LP}$ and defined according to the usual matrix definitions, $\lgk{LP}$ enjoys the following form of the deductive interpolation property:
\begin{quote}
If $\not\vdash_\lgk{LP} \beta$ and $\alpha\vdash_\lgk{LP}\beta$, then there is a formula $\delta$ such that $\alpha\vdash_\lgk{LP}\delta$, $\delta\vdash_\lgk{LP}\beta$, and $\var{\delta}\subseteq\var{\alpha}\cap\var{\beta}$.
\end{quote}
See \cite{Milne2016} for a discussion placing this interpolation result in the context of so-called split interpolation, and see \cite{Blomet2025} for a more wide-ranging discussion of split interpolation in three-valued logics.

\subsubsection{Johansson's Minimal Logic}\label{subsec:minimal}
Some other paraconsistent logics are rather closer in flavor to $\ipc$. Johansson's minimal logic $\Jhn$ (see \cite{Johansson1937}) is one of these. It can be formulated in the basic language $\Lbas = \{\meet,\join,\to,\bot,\top\}$ according to a Hilbert-style calculus that relaxes some of the conditions of $\ipc$; see \cite[Chapter 2]{Odintsov2008}. Indeed, $\ipc$ itself can be axiomatized relative to $\Jhn$ by adding the single axiom $\bot\to\alpha$. In semantic terms, $\Jhn$ can be characterized as the logic corresponding to \emph{j-algebras}, which are algebraic structures of the form $(A,\meet,\join,\to,\bot,\top)$ such that $(A,\meet,\join,\to,\top)$ is a Brouwerian algebra (aka an implicative lattice), i.e. a $\bot$-free subreduct of a Heyting algebra, and $\bot$ is an arbitrary constant.

Although it is a close cousin of $\ipc$, understanding interpolation in $\Jhn$ has proven very challenging. Among other things, it remains open whether there are finitely many axiomatic extensions of $\Jhn$ with \prp{CIP}, which is equivalent to \prp{DIP} in this setting. There are, however, some results confined to tractable extensions of $\Jhn$, and  we summarize a couple notable landmarks.
\begin{proposition}[{\cite[Corollary 5]{Mak2012}}]
Let $\lgk{JX}$ be the extension of $\Jhn$ by the axiom $(\bot\to\alpha)\join (\alpha\to\bot)$. There are finitely many axiomatic extensions of $\lgk{JX}$ with the \prp{CIP} or, equivalently, the \prp{DIP}.
\end{proposition}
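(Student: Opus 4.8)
The plan is to reduce interpolation for axiomatic extensions of $\lgk{JX}$ to interpolation for axiomatic extensions of $\ipc$, exploiting the fact that $\lgk{JX}$ decomposes the constant $\bot$ into just two cases—either $\bot$ behaves like the bottom of a Heyting algebra (giving $\ipc$) or $\bot$ is a designated top-like element. On the algebraic side, recall (as in the discussion of $\Jhn$ above) that models of $\Jhn$ are j-algebras $(A,\meet,\join,\to,\bot,\top)$ where $(A,\meet,\join,\to,\top)$ is Brouwerian and $\bot$ is arbitrary. Since $\lgk{L}=(\Lbas,\vdash_\lgk{L})$ for any axiomatic extension of $\Jhn$ is algebraizable, $\lgk{L}$ has the \prp{CIP} (equivalently, since $\lgk{L}$ enjoys the classical deduction theorem, the \prp{DIP}) if and only if the corresponding variety of j-algebras has the amalgamation property; see \refchapter{chapter:algebra}. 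So the whole problem becomes: show there are only finitely many subvarieties of the variety $\mathsf{V}_{\lgk{JX}}$ of j-algebras validating $(\bot\to x)\join(x\to\bot)$ that are amalgamable.

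The key structural observation is that the axiom $(\bot\to x)\join(x\to\bot)$ forces, in any subdirectly irreducible (s.i.) member of $\mathsf{V}_{\lgk{JX}}$, that $\bot$ is comparable with every element: either $\bot\leq x$ or $x\leq \bot$. Hence in an s.i.\ j-algebra $A$ of this variety, $\bot$ sits as a ``cut point'': the elements below $\bot$ form the ideal $\{x : x\leq\bot\}$ and those above form a filter, and $A$ is assembled from the Brouwerian algebra structure below $\bot$ together with the top piece above it. First I would make this decomposition precise—classifying s.i.\ algebras in $\mathsf{V}_{\lgk{JX}}$ as (roughly) either genuine Heyting algebras (when $\bot$ is the actual bottom) or such ordinal-sum-like structures with $\bot$ strictly above the bottom—and I would check that only finitely many ``shapes'' of the top piece above $\bot$ can occur in an amalgamable subvariety. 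Then I would transfer the known classification for $\ipc$: by Maksimova's theorem (Theorem~\ref{thm:IPC}) there are exactly eight superintuitionistic logics with the \prp{CIP}, hence only finitely many amalgamable varieties of Heyting algebras, and I would argue that each amalgamable subvariety of $\mathsf{V}_{\lgk{JX}}$ is determined by a choice of such a Heyting subvariety together with one of finitely many choices for how $\bot$ is positioned—yielding finitely many amalgamable subvarieties in total, and therefore finitely many axiomatic extensions of $\lgk{JX}$ with the \prp{CIP}.

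The final step would be to translate this algebraic count back to logic: each amalgamable subvariety of $\mathsf{V}_{\lgk{JX}}$ corresponds to exactly one axiomatic extension $\lgk{L}\supseteq\lgk{JX}$ with the \prp{CIP}, and since $\lgk{L}$ inherits the classical deduction theorem from $\Jhn$ (recall every axiomatic extension of $\ipc$, and likewise of $\Jhn$, enjoys $\Gamma,\alpha\vdash_\lgk{L}\beta \iff \Gamma\vdash_\lgk{L}\alpha\to\beta$), the \prp{CIP} and the \prp{DIP} coincide for all of these, so the count is the same for both properties. The finiteness is thus immediate once the structural classification is in hand.

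The main obstacle is the structural analysis of s.i.\ algebras in $\mathsf{V}_{\lgk{JX}}$ and, in particular, controlling how amalgamation interacts with the ``cut point'' $\bot$: one must rule out infinitely many distinct amalgamable positions/configurations for $\bot$, and must verify that amalgams constructed on the two pieces (below $\bot$, above $\bot$) can be glued into an amalgam of the whole—a step where the failure of amalgamation in the gluing, rather than in the pieces, is the delicate point. I expect this gluing/decomposition argument, rather than the appeal to Maksimova's classification, to be where the real work lies.
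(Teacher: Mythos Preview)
The paper does not contain a proof of this proposition; it is stated with attribution to \cite[Corollary 5]{Mak2012} and, consistent with the chapter's policy, the proof is omitted entirely. There is therefore nothing in the paper to compare your argument against line by line.

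That said, your overall strategy---passing to amalgamation in the corresponding variety of j-algebras, observing that in subdirectly irreducible models the axiom $(\bot\to x)\join(x\to\bot)$ forces $\bot$ to be comparable with every element, and then decomposing along the position of $\bot$ to reduce to Maksimova's finite list for $\ipc$---is sound in spirit and is essentially the intuition behind Maksimova's actual argument. Her proof is packaged in the language of \emph{well-composed} logics and relies on Odintsov's classification of extensions of $\Jhn$ by their intuitionistic and negative companions (alluded to in the paragraph following the proposition); this companion machinery is a more systematic form of the ``cut-point'' decomposition you sketch, and handles the bookkeeping of how amalgamation interacts with the two pieces.

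One point to tighten: your dichotomy ``$\bot$ is the actual bottom'' versus ``$\bot$ strictly above the bottom'' presupposes that the Brouwerian reduct has a least element, which need not hold in general j-algebras. The relevant decomposition is better phrased as the filter $[\bot,\top]$ (a genuine Heyting algebra, governing the intuitionistic companion) together with the behaviour below $\bot$ (governing the negative companion), without assuming a global bottom. With that correction, your plan matches the methodology the paper points to.
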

\begin{proposition}[{\cite[Theorem~5.8]{Mak2011}}]
Let $\lgk{Gl}$ be the extension of $\Jhn$ by the law of excluded middle $\alpha\join\neg\alpha$. There are exactly twenty axiomatic extensions of $\lgk{Gl}$ with the \prp{CIP} or, equivalently, the \prp{DIP}.
\end{proposition}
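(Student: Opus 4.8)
The plan is to move to algebra and reduce the question to the already-settled case of varieties of Heyting algebras. Since the implication $\to$ of $\Jhn$ is the relative pseudocomplement of the underlying Brouwerian algebra, $\lgk{Gl}$ and each of its axiomatic extensions satisfies a classical deduction theorem, so \prp{CIP}, \prp{DIP}, and \prp{MIP} coincide throughout and it is enough to treat the \prp{CIP}. The logic $\lgk{Gl}$ is algebraizable, with equivalent algebraic semantics the variety $\mathcal{V}_{\lgk{Gl}}$ of j-algebras satisfying the identity $p \join (p \to \bot) \approx \top$. Axiomatic extensions of $\lgk{Gl}$ correspond dually to subvarieties of $\mathcal{V}_{\lgk{Gl}}$, and, by the bridge theorems of \refchapter{chapter:algebra}, an axiomatic extension has the \prp{CIP} exactly when the corresponding subvariety has the superamalgamation property---which, given the deduction theorem, coincides here with ordinary amalgamation, just as it does for varieties of Heyting algebras. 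The task therefore becomes: show that exactly twenty subvarieties of $\mathcal{V}_{\lgk{Gl}}$ have the amalgamation property.

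Next I would marshal the structure theory of $\lgk{Gl}$-algebras as developed by Odintsov \cite{Odintsov2008}. The constant $\bot$ decomposes such an algebra $A$: the negated elements $\{a \to \bot : a \in A\}$ coincide with the interval $[\bot, \top]$, which is a Heyting algebra validating the law of excluded middle and hence a Boolean algebra, while quotienting $A$ by the (automatically implicative) filter generated by $\bot$ yields a Brouwerian, indeed Heyting, algebra. Consequently the subdirectly irreducible members of $\mathcal{V}_{\lgk{Gl}}$ are built by a gluing-type construction from a Boolean algebra---its ``cap''---and a Heyting algebra, and the subvariety lattice of $\mathcal{V}_{\lgk{Gl}}$ inherits a tractable description organized along these two components. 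In particular, the subvarieties in which $\bot$ is forced to be the least element are precisely the varieties of Boolean algebras, and there is a natural map from subvarieties of $\mathcal{V}_{\lgk{Gl}}$ to varieties of Heyting algebras recording the ``part below $\bot$''.

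Third, I would transfer amalgamation across this decomposition: the superamalgamation property of a subvariety of $\mathcal{V}_{\lgk{Gl}}$ should be controlled by the superamalgamation property of its associated variety of Heyting algebras together with a bounded amount of extra data governing the Boolean cap and the way it is attached. The Heyting side is pinned down by Maksimova's classification---there are exactly eight varieties of Heyting algebras with superamalgamation, corresponding to the eight superintuitionistic logics of Theorem~\ref{thm:IPC}, and exactly four survive when the relevant subreduct is taken $\bot$-free, by Proposition~\ref{thm:posIPC}. After verifying that the resulting list of candidate subvarieties is closed under the lattice operations, one runs the finite case analysis: for each admissible Boolean cap and each admissible Heyting part, either construct an amalgam or exhibit an explicit finite $\lgk{Gl}$-algebra witnessing its failure. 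Bookkeeping these cases yields the tally of exactly twenty.

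I expect the third step to be the crux. Amalgamation is notoriously fragile under algebraic constructions, and showing that the superamalgamation property of a $\lgk{Gl}$-algebra variety is faithfully reflected---in both directions---by the superamalgamation property of the corresponding Heyting variety together with the Boolean-cap data requires a careful analysis of which ``mixed'' subvarieties actually occur and of how amalgams of the pieces can be assembled into an amalgam of the whole. Tied to this, the word ``exactly'' forces the count to be nailed down on the nose: one must both construct amalgams for all twenty candidates and refute amalgamation for every remaining subvariety, the latter demanding judiciously chosen finite counterexample algebras. By contrast, the structure theory of the second step is essentially known, and the first step is routine once the bridge theorems of \refchapter{chapter:algebra} are granted.
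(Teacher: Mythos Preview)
The paper does not prove this proposition. It is stated with a bare citation to \cite[Theorem~5.8]{Mak2011}, and, as the introduction announces, proofs of the surveyed results are omitted throughout the chapter. There is therefore no argument in the paper against which to compare your proposal.

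For what it is worth, your outline is in the right spirit: Maksimova's own argument in \cite{Mak2011} is algebraic, passing through the bridge between the \prp{CIP} and (super)amalgamation for the corresponding varieties of j-algebras, and it does exploit a structural analysis of $\lgk{Gl}$-algebras relating them to Heyting algebras and to the classification of Theorem~\ref{thm:IPC}. Your self-assessment that the third step is the crux is accurate---the exact count of twenty emerges from a detailed case analysis that your sketch sensibly does not pretend to carry out. One caution on the second step: your description of the decomposition (Boolean interval $[\bot,\top]$ plus a Heyting quotient by the filter generated by $\bot$) is heuristic and would need to be sharpened before it bears weight; in particular, the assertion that the negated elements coincide with $[\bot,\top]$ and that this interval is Boolean requires justification, and the interaction between the two pieces in a general subvariety is subtler than a straightforward gluing. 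None of this is fatal to the plan, but it is where the real work in \cite{Mak2011} lies.
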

The logic $\lgk{Gl}$ has played a significant role in the study of interpolation in extensions of $\Jhn$. In \cite{Mak2010}, Maksimova shows that the study of the \prp{WIP} for extensions of $\Jhn$ can be reduced to the case of $\lgk{Gl}$.
A complete description of axiomatic extensions of $\Jhn$ with this weak interpolation property is given in \cite{Mak2011b}, which shows that the set of such logics is the union of eight pairwise disjoint intervals in the lattice of axiomatic extensions.

Many questions about interpolation in extensions of $\Jhn$ remain open.

\begin{question}
How many axiomatic extensions of $\Jhn$ have the \prp{CIP} (equivalently, the \prp{DIP)}? Are there infinitely many such extensions?
\end{question}

\begin{question}
Is it possible to give a complete classification of extensions of $\Jhn$ with the \prp{CIP} (equivalently, the \prp{DIP})?
\end{question}
Despite the apparent difficulty of the previous questions, we note that in some sense interpolation in extensions of $\Jhn$ is constrained to certain regions of the lattice of extensions. Odintsov has shown that the lattice of extensions of $\Jhn$ may be partitioned into certain intervals according to a classification based on the intuitionistic and negative companions of these logics; see \cite{Odintsov2001}. Only finitely many of these intervals contain logics with the \prp{CIP}. For the most up-to-date summary of what is currently known, the reader may consult \cite{MakYun2018}.

\subsubsection{Nelson's Constructive Logic With Strong Negation}

Nelson's logic with strong negation \cite{Nelson1964} amounts to the expansion of intuitionistic propositional logic $\ipc$ by an additional negation $\sim$ that satisfies De Morgan's Laws. There are several different variants of this logic, most often formulated in various fragments of the language $\Ln_{\mathsf N} = \{\meet,\join,\to,\sim,\bot,\top\}$. We will focus mostly on Nelson's paraconsistent logic $\NF$, which can be specified as a logic in the language $\Ln_{\mathsf N}$ by a Hilbert-style calculus or using algebraic models; see \cite[Chapter 8]{Odintsov2008}. The fragment of $\NF$ without $\sim$ coincides with $\ipc$. On the other hand, the fragment without $\bot,\top$, denoted $\NFmin$, is also sometimes considered since it dispenses with the intuitionistic negation $\alpha\mapsto\alpha\to\bot$. Nelson's logic also has a much-studied variant that satisfies the principle of explosion, and this logic, denoted $\NEmin$, can be obtained by adding to $\NFmin$ the single axiom $\sim\alpha\to (\alpha\to\beta)$; see \cite[Section~8.1]{Odintsov2001}. The version of $\NEmin$ with both $\sim$ and the intuitionistic negation is denoted $\NE$.

Using algebraic tools, Goranko has obtained a complete classification of axiomatic extensions of $\NE$ with the Craig interpolation property.

\begin{theorem}[{\cite[Theorem 71]{Goranko1985}}]\label{thm:Goranko}
There are exactly fifteen axiomatic extensions of $\NE$ with the \prp{CIP}. In particular, seven of the non-trivial axiomatic extensions of $\NE$ with the \prp{CIP} arise from expanding the seven non-trivial extensions of $\ipc$ with the \prp{CIP} (see Theorem~\ref{thm:IPC}) with a strong negation $\sim$ satisfying $\neg\alpha\to(\alpha\to\beta)$. The remaining seven non-trivial axiomatic extensions of $\NE$ with the \prp{CIP} are the aforementioned logics respectively extended with the axiom $\neg (\alpha\leftrightarrow\sim\alpha)$.
\end{theorem}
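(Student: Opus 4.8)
The plan is to run the algebraic strategy that yields Theorem~\ref{thm:IPC} for superintuitionistic logics, transported across the twist-structure correspondence between Nelson algebras and Heyting algebras. First, $\NE$ and each of its axiomatic extensions is algebraizable in the sense of Blok--Pigozzi, with equivalent algebraic semantics a variety $\mathbf{N}$ of N3-lattices (Nelson algebras with least element), and its axiomatic extensions correspond dually and isomorphically to the subvarieties of $\mathbf{N}$. Since the $\{\meet,\join,\to,\bot,\top\}$-fragment of $\NE$ is $\ipc$, every extension of $\NE$ has the classical deduction theorem for $\to$, so by Section~\ref{sec:deduction} the \prp{CIP}, \prp{DIP} and \prp{MIP} coincide for all of these logics; moreover, by the standard link between interpolation and amalgamation for algebraizable logics with a deduction theorem (see \refchapter{chapter:algebra}), the \prp{CIP} of an extension $\lgk{L}$ is equivalent to the superamalgamation property of the subvariety of $\mathbf{N}$ associated with $\lgk{L}$. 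This reduces the theorem to counting the subvarieties of $\mathbf{N}$ with superamalgamation.

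Second, I would use the Vakarelov--Sendlewski--Fidel representation: every N3-lattice embeds into the full twist product $\mathbf{H}^{\bowtie}$ of a Heyting algebra $\mathbf{H}$, and the induced skeleton functor from $\mathbf{N}$ to the variety $\mathsf{HA}$ of Heyting algebras gives a complete, order-preserving surjection $\Phi$ from the subvariety lattice of $\mathbf{N}$ onto the subvariety lattice of $\mathsf{HA}$, i.e., dually onto the lattice of superintuitionistic logics. Over a fixed Heyting variety $\mathsf{W}$, the fiber $\Phi^{-1}(\mathsf{W})$ is the interval between two distinguished subvarieties: the variety $\mathsf{V}_\mathsf{W}^{+}$ generated by all full twist products $\mathbf{H}^{\bowtie}$ with $\mathbf{H}\in\mathsf{W}$, which is its top, and the variety $\mathsf{V}_\mathsf{W}^{-}$ generated by the centered subalgebras of these, which is its bottom and is cut out within the fiber by the axiom $\neg(\alpha\leftrightarrow\sim\alpha)$. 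Since $\Phi$ is surjective, every extension of $\NE$ lies over some superintuitionistic logic.

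Third, and this is the crux, I would establish a two-directional transfer of amalgamation along the twist construction. Positively: if $\mathsf{W}$ has superamalgamation, then so do $\mathsf{V}_\mathsf{W}^{+}$ and $\mathsf{V}_\mathsf{W}^{-}$, because a superamalgam of Heyting algebras over a common subalgebra lifts to a superamalgam of the associated full twist products (respectively, of their centered parts), using that the twist functor preserves embeddings and reflects the order. Negatively: a subvariety of $\mathbf{N}$ lying strictly between $\mathsf{V}_\mathsf{W}^{-}$ and $\mathsf{V}_\mathsf{W}^{+}$ fails the amalgamation property---hence superamalgamation---which I would prove by exhibiting a small amalgamation problem in it that admits no amalgam, exploiting that such a subvariety only partly constrains the relationship between $\sim$ and $\neg$ on its members; one also checks, by applying $\Phi$ to a putative amalgam of centered twist products, that superamalgamation of a Nelson subvariety forces superamalgamation of its Heyting skeleton. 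Combining this with the \prp{CIP}/superamalgamation equivalence, the positive direction shows that each of the $2\cdot 7+1=15$ candidate logics has the \prp{CIP}---the seven non-trivial superintuitionistic logics with the \prp{CIP} from Theorem~\ref{thm:IPC}, each lifted to $\mathsf{V}_\mathsf{W}^{+}$ and to $\mathsf{V}_\mathsf{W}^{-}$, together with the trivial logic, over which $\mathsf{V}_\mathsf{W}^{+}=\mathsf{V}_\mathsf{W}^{-}$---while the negative direction, together with surjectivity of $\Phi$ and Theorem~\ref{thm:IPC}, shows that no other extension of $\NE$ can have it. The explicit axiomatizations in the statement then read off from the fibers: adjoining a strong negation $\sim$ satisfying $\neg\alpha\to(\alpha\to\beta)$ to a given \prp{CIP} superintuitionistic logic produces $\mathsf{V}_\mathsf{W}^{+}$, and further imposing $\neg(\alpha\leftrightarrow\sim\alpha)$ produces $\mathsf{V}_\mathsf{W}^{-}$.

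The main obstacle I expect is precisely this third step---above all the negative transfer---which requires enough control of the subvariety lattice of $\mathbf{N}$ fibered over $\mathsf{HA}$ to be certain that \emph{only} the two endpoints of an amalgamable fiber can themselves be amalgamable, together with uniform failure-of-amalgamation witnesses in all of the (possibly many) intermediate subvarieties. The superintuitionistic input is a black box (Theorem~\ref{thm:IPC}), and algebraizability and the twist-structure machinery are standard; what carries the real weight is the delicate interplay between amalgamation and the strong negation $\sim$, which is exactly the ingredient distinguishing $\NE$ from $\ipc$.
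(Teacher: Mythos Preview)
The paper does not give a proof of this theorem: it is a survey that explicitly omits proofs of the major results it reports (see the opening of Section~3), and for Theorem~\ref{thm:Goranko} it says only that Goranko obtained the classification ``using algebraic tools,'' with a citation to \cite{Goranko1985}. There is therefore no paper-proof to compare your proposal against in any detail.

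That said, your plan matches the algebraic strategy the paper alludes to and is, in outline, Goranko's own method: algebraize $\NE$ and its extensions by varieties of N3-lattices, trade the \prp{CIP} for (super)amalgamation via the deduction theorem (the paper confirms a classical deduction theorem for extensions of $\NF$, hence of $\NE$), and then use the Vakarelov--Sendlewski twist/skeleton correspondence to fiber the subvariety lattice of N3-lattices over that of Heyting algebras, importing Maksimova's Theorem~\ref{thm:IPC} as a black box. Your identification of the two endpoints of each fiber with the ``full twist'' and ``centered'' subvarieties, axiomatized respectively by adjoining $\sim$ with $\neg\alpha\to(\alpha\to\beta)$ and by further imposing $\neg(\alpha\leftrightarrow\sim\alpha)$, is exactly the structure underlying the $7+7+1$ count, and your flagging of the negative transfer---showing that \emph{only} the endpoints of an amalgamable fiber can have (super)amalgamation---as the load-bearing step is accurate.
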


Metcalfe has also given an analytic sequent calculus for $\NE$ in \cite{Metcalfe2009}, which can be used to give a different proof that $\NE$ has the \prp{CIP}.

Odintsov has widened Goranko's classification to give a complete classification of axiomatic extensions of $\NF$ with the \prp{CIP}.

\begin{theorem}[{\cite[Theorem~5.9]{Odintsov2006}}]
There are exactly 29 axiomatic extensions of $\NF$ with the \prp{CIP}. Fifteen of these are given by Theorem~\ref{thm:Goranko}. Seven of the remaining logics are given by expanding the seven non-trivial extensions of $\ipc$ with the \prp{CIP} with a strong negation $\sim$. The remaining seven logics are the respective axiomatic extensions of the latter logics by the axiom $\neg\neg (\alpha\join\sim\alpha)$.
\end{theorem}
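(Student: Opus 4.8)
The plan is to reduce the classification to Maksimova's theorem on superintuitionistic logics (Theorem~\ref{thm:IPC}) by passing to the algebraic semantics of $\NF$ and exploiting a twist-product representation of the associated algebras. The logic $\NF$ is algebraizable, and its equivalent algebraic semantics is a variety $\mathcal{N}$ of bounded $\lgk{N4}$-lattices; as usual, axiomatic extensions of $\NF$ correspond dually to subvarieties of $\mathcal{N}$. The first step is to record this correspondence and invoke the bridge theorem relating interpolation to amalgamation: since $\NF$ carries a deduction theorem given by a term-definable implication, the $\prp{CIP}$ for an axiomatic extension $\lgk{L}$ of $\NF$ is equivalent to the (super)amalgamation property of the subvariety of $\mathcal{N}$ associated with $\lgk{L}$ (see \refchapter{chapter:algebra}). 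One wrinkle to dispatch here is that the implication of Nelson's logic does not validate a \emph{classical} deduction theorem, so the hypotheses of the bridge theorem must be verified with some care; this is routine given the known proof theory of the system (cf.\ the sequent calculus of \cite{Metcalfe2009}).

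The central step is to transfer the amalgamation problem for subvarieties of $\mathcal{N}$ to the much better understood setting of Heyting algebras. Here one uses Odintsov's representation of $\lgk{N4}$-lattices as twist structures built over Brouwerian (equivalently, Heyting) algebras: each member of $\mathcal{N}$ is coordinatized by an underlying Heyting algebra --- its \emph{intuitionistic skeleton} --- together with a bounded amount of extra data governing the behaviour of the strong negation $\sim$. This coordinatization is functorial enough that a subvariety $\mathcal{V}\subseteq\mathcal{N}$ is essentially determined by the variety $\mathsf{sk}(\mathcal{V})$ of Heyting algebras swept out by its skeletons, and the fibres of $\mathcal{V}\mapsto\mathsf{sk}(\mathcal{V})$ are finite and explicitly describable. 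The key technical lemma to prove is that $\mathcal{V}$ has the amalgamation property if and only if $\mathsf{sk}(\mathcal{V})$ has the amalgamation property \emph{and} $\mathcal{V}$ is of one of a short list of \emph{admissible} forms over $\mathsf{sk}(\mathcal{V})$ --- concretely, that amalgams in $\mathcal{N}$ may be assembled from amalgams of skeletons whenever the skeleton variety amalgamates and the $\sim$-data is of one of the admissible shapes, and that no amalgam exists otherwise.

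Granting this lemma, the classification becomes bookkeeping. By Theorem~\ref{thm:IPC} in its algebraic guise --- exactly eight varieties of Heyting algebras enjoy the superamalgamation property, seven of them non-trivial --- the candidate skeletons are precisely the seven non-trivial varieties of Heyting algebras matching the non-trivial superintuitionistic logics with the $\prp{CIP}$, together with the trivial variety. Over each such skeleton one enumerates the admissible subvarieties of $\mathcal{N}$: the explosive ones, i.e.\ those satisfying $\sim\alpha\to(\alpha\to\beta)$, recover exactly Goranko's fifteen extensions of $\NE$ from Theorem~\ref{thm:Goranko} (including the trivial logic); the ``bare'' expansions of the seven non-trivial superintuitionistic $\prp{CIP}$-logics by a strong negation $\sim$ give seven more; and the expansions of these latter seven by $\neg\neg(\alpha\join\sim\alpha)$ give the final seven. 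Counting the trivial logic once yields $15+7+7 = 29$ axiomatic extensions. The remaining obligation is the ``easy'' direction, namely that each of the listed logics genuinely has the $\prp{CIP}$; this follows either from the amalgamation side --- each admissible variety is checked to amalgamate --- or proof-theoretically, by extending the Maehara-style argument underlying the sequent calculus for $\NE$.

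I expect the main obstacle to be exactly this central transfer lemma: pushing amalgamation (and, for the strong version, superamalgamation) through the twist-product representation, showing that the $\sim$-data never manufactures a genuinely new amalgamation obstruction beyond the skeleton level for the admissible shapes, and simultaneously ruling out amalgamation for every inadmissible shape over each of the seven candidate skeletons. This is where essentially all of the combinatorial content resides, and it is the step for which the detailed structure theory of $\lgk{N4}$-lattices in \cite{Odintsov2008} is indispensable; the reduction to Heyting algebras is clean only once this lemma is in place.
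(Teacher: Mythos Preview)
The paper you are working from is a survey and does \emph{not} supply a proof of this theorem; it simply quotes the result from \cite{Odintsov2006} and moves on, as it does for essentially every classification theorem it states. So there is no ``paper's own proof'' to compare against here.

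That said, your proposal is a faithful outline of the argument in Odintsov's original paper: algebraize $\NF$ via (bounded) $\lgk{N4}$-lattices, invoke the CIP/superamalgamation bridge, use the twist-structure representation to reduce amalgamation questions to the Heyting skeleton, apply Maksimova's classification (Theorem~\ref{thm:IPC}), and then enumerate the finitely many admissible ``shapes'' of the $\sim$-data over each of the seven non-trivial skeletons. Your identification of the transfer lemma as the combinatorial heart of the matter is accurate. One small correction: you flag as a ``wrinkle'' that the implication of $\NF$ does not support a classical deduction theorem, but the survey itself asserts (citing \cite{NRMS2019}) that axiomatic extensions of $\NF$ \emph{do} have a classical deduction theorem---the point being that the relevant implication is the strong Nelson implication $(\alpha\to\beta)\meet(\sim\beta\to\sim\alpha)$ rather than the primitive $\to$. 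With that term in hand the bridge to superamalgamation goes through without the extra care you anticipate.
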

Note that the axiomatic extensions of $\NF$ have a classical deduction theorem (see \cite{NRMS2019}), so the preceding theorems also given complete classifications for axiomatic extensions with the \prp{DIP}.

\subsection{Fuzzy Logics}\label{subsec:fuzzy}

The moniker \emph{fuzzy logic} typically applies to any of several systems intended to capture reasoning where truth values are vague, fuzzy, or uncertain.\footnote{These are sometimes called `many-valued' logics. However, since most of the non-classical logics considered here arguably have many truth values in various semantic frameworks, we do not use that term here.} Historically, the development of fuzzy logic has been heavily influenced by various efforts to endow the real unit interval $[0,1]$ with logical structure (often algebraic in nature) making it into a set of truth values, where the extrema $0$ and $1$ represent `crisp' or classical truth values. The restriction of the logical structure overlaying $[0,1]$ to the crisp values in $\{0,1\}$ should then recover $\cpl$ according to this program.

Fuzzy logic has undergone several successive phases of generalization over the past century or so, leading to frameworks that progressively encompass the aforementioned ideas in broader and broader environments. We will organize our discussion of interpolation in fuzzy logics more or less in order of increasing generality, touching on the most prominent frameworks. 

\subsubsection{{\L}ukasiewicz's Logic and Its Extensions}

Probably the most thoroughly studied fuzzy logics---and also among the oldest---are the various systems of {\L}ukasiewicz, of which the infinite-valued {\L}ukasiewicz logic $\L$ is the most prominent; see \cite{Luk1920}, and for a historical account see \cite{Cig2007}. The logic $\L$ may be formulated in a number of equivalent languages. For the sake of relating it to other logics that we discuss, we note that it can be formulated in the language $\Lsc = \{\meet,\join,\cdot,\to,0,1\}$, where $\cdot$ gives a kind of `strong conjunction' and the negation $\neg\alpha$, which abbreviates $\alpha\to 0$, satisfies De Morgan's laws as well as the law of double negation $\alpha\leftrightarrow\neg\neg\alpha$. In many sources, instead of $\cdot$ the De Morgan dual connective $\oplus$ is used, i.e., $\alpha\oplus\beta$ abbreviates $\neg (\neg\alpha\cdot\neg\beta)$. The connective $\oplus$ thus represents a kind of `strong disjunction'.

The logic $\L$ has a rather elaborate proof theory (see \cite{MOG08}), so $\L$ and its extensions are usually studied using semantic methods. These most usually involve the equational class of \emph{MV-algebras} (see \cite{Cignoli2000}), which give the equivalent algebraic semantics of $\L$. In particular, $\L$ is sound and complete with respect to a single algebraic model defined on $[0,1]$, where $\meet$ and $\join$ interpret the usual binary maximum and minimum, $\cdot$ is defined by $x\cdot y = \max\{x+y-1,0\}$, where $+$ is the usual sum of real numbers, and $\to$ is uniquely specified by stipulating that $x\to y$ is the greatest element $z$ of $[0,1]$ such that $x\cdot z \leq y$. This single algebraic model generates the entire equational class of MV-algebras.

Every axiomatic extension $\lgk{L}$ of $\L$ has a (implicative) local deduction theorem of the form
\[
\Gamma,\alpha\vdash_\lgk{L}\beta \iff \Gamma\vdash_\lgk{L}\alpha^n\to\beta \text{ for some positive integer }n,
\]
where $\alpha^n$ is an abbreviation for the formula $\alpha\cdot\alpha\cdot\ldots\cdot\alpha$ in which $\alpha$ occurs $n$ times. Thus, for any axiomatic extension $\lgk{L}$ of $\L$, the \prp{CIP} implies the \prp{DIP}. However, as shown by Komori, there are very few axiomatic extensions of $\L$ with the \prp{CIP}.
 
\begin{theorem}[\cite{Komori1981}]
The only axiomatic extensions of $\L$ with the \prp{CIP} are $\cpl$ and the trivial logic consisting of all formulas.
\end{theorem}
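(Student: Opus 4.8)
The plan is to move to the algebraic side, reduce the statement to a single problematic implication, and then argue that this implication admits no interpolant unless the logic is already inside classical logic. Recall that $\L$ is algebraizable with the variety of MV-algebras as its equivalent algebraic semantics, so the axiomatic extensions of $\L$ correspond, reversing inclusions, to the subvarieties of MV-algebras; under this correspondence $\cpl$ in the language $\Lsc$ matches the variety $\mathbb{BA}$ of Boolean algebras and the trivial logic matches the trivial variety. Since $\cpl$ has the classical Craig interpolation theorem (the truth constants being present) and the trivial logic has the \prp{CIP} vacuously, it suffices to show that every other axiomatic extension $\lgk{L}$ of $\L$ fails the \prp{CIP}. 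Fix such an $\lgk{L}$ and let $V$ be the corresponding subvariety; then $V$ is nontrivial and $V\not\subseteq\mathbb{BA}$. Using the fact that every MV-algebra is a subdirect product of MV-chains, $V\not\subseteq\mathbb{BA}$ yields a non-Boolean MV-chain $C\in V$, i.e.\ one with an element $a$ satisfying $0<a<1$; since $\neg$ is an order-reversing involution on any chain, $\neg a$ is also strictly between $0$ and $1$, so $a\meet\neg a=\min\{a,\neg a\}>0$ and $a\join\neg a=\max\{a,\neg a\}<1$ in $C$.

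The witnessing theorem I would use is $\varphi := (p\meet\neg p)\to(q\join\neg q)$ with $p\neq q$. I would first check that $\vdash_\L\varphi$, hence $\vdash_\lgk{L}\varphi$: in any MV-chain $\min\{x,\neg x\}\leq\max\{y,\neg y\}$ for all $x,y$ (were this false, $x$ and $\neg x$ would both strictly exceed $\max\{y,\neg y\}$, forcing $x>y$ and $\neg x>\neg y$, i.e.\ $x<y$), and this lifts from chains to all MV-algebras by the subdirect representation. Now suppose $\delta\in\Fm_{\Lsc}$ were a Craig interpolant for $\varphi$. Then $\var{\delta}\subseteq\{p\}\cap\{q\}=\emptyset$, so $\delta$ is variable-free, and since the subalgebra generated by $0,1$ in any nontrivial MV-algebra is the two-element Boolean algebra, $\delta$ is provably equivalent in the consistent logic $\lgk{L}$ either to $0$ or to $1$. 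If $\delta$ is provably $1$, then $\vdash_\lgk{L}\delta\to(q\join\neg q)$ gives $\vdash_\lgk{L}q\join\neg q$, which evaluated in $C$ at $q\mapsto a$ yields $a\join\neg a=1$, contradicting $a\join\neg a<1$. If $\delta$ is provably $0$, then $\vdash_\lgk{L}(p\meet\neg p)\to\delta$ gives $\vdash_\lgk{L}\neg(p\meet\neg p)$, which evaluated in $C$ at $p\mapsto a$ yields $a\meet\neg a=0$, contradicting $a\meet\neg a>0$. Either way we reach a contradiction, so $\varphi$ has no interpolant and $\lgk{L}$ lacks the \prp{CIP}.

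I do not anticipate a genuine obstacle: the argument is short once the witness is chosen. The two points needing a little care are both standard facts to be cited rather than reproved: that every MV-algebra is a subdirect product of chains (so that non-Boolean behaviour can be detected on a single chain $C\in V$) and that the variable-free $\Lsc$-terms are, up to provable equivalence in any consistent extension of $\L$, exactly $0$ and $1$. The conceptual heart of the proof is the contrast with the \prp{DIP}, which does hold for $\L$ and all its extensions since the variety of MV-algebras has the amalgamation property: $\varphi$ is a valid implication between formulas with no common variable, so its only candidate interpolants are the truth constants, and those work only when $\lgk{L}$ already validates $p\meet\neg p\leftrightarrow0$ and $q\join\neg q\leftrightarrow1$, i.e.\ only when $\lgk{L}$ is classical logic or the trivial logic.
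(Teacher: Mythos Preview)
Your argument is correct and self-contained. The paper does not give its own proof of this theorem---it simply cites Komori---so there is no approach to compare against; your direct counterexample via $(p\meet\neg p)\to(q\join\neg q)$ and the observation that variable-free terms collapse to $0$ or $1$ in any consistent extension is a clean and standard route.

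One factual slip in your closing commentary: you write that the \prp{DIP} ``does hold for $\L$ and all its extensions since the variety of MV-algebras has the amalgamation property.'' This is false. Amalgamation for MV-algebras gives the \prp{DIP} for $\L$ itself, but amalgamation does not automatically transfer to subvarieties, and indeed the paper records (via Di~Nola--Lettieri) that only countably many axiomatic extensions of $\L$ have the \prp{DIP}---precisely those generated by a single MV-chain---while there are continuum-many extensions in total. This does not affect your proof of the theorem, which nowhere relies on the claim, but you should delete or correct that sentence.
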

On the other hand, there are many more axiomatic extensions of $\L$ with the \prp{DIP}. The following result is due to Di Nola and Lettieri, who proved it in an equivalent algebraic formulation by giving a complete description of equational classes of MV-algebras with the amalgamation property; see \refchapter{chapter:algebra}.

 \begin{theorem}[{cf.~\cite{DiNola2000}}]
The axiomatic extensions of $\L$ that have the \prp{DIP} are exactly those characteristic with respect to a single totally ordered MV-algebra. Thus, in particular, there countably infinitely many of these.
 \end{theorem}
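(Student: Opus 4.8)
The plan is to pass to algebra and then invoke a known classification. {\L}ukasiewicz logic $\L$ is algebraizable with equivalent algebraic semantics the variety $\mathsf{MV}$ of MV-algebras, so the assignment sending an axiomatic extension $\lgk{L}$ of $\L$ to the variety $\mathsf{V}_{\lgk{L}}$ of MV-algebras validating its axioms is a dual isomorphism from the lattice of axiomatic extensions of $\L$ onto the lattice of subvarieties of $\mathsf{MV}$. Moreover, for varieties of MV-algebras the \prp{DIP} for $\lgk{L}$ is equivalent to the amalgamation property for $\mathsf{V}_{\lgk{L}}$; see \refchapter{chapter:algebra}. The theorem then amounts to the statement that a subvariety of $\mathsf{MV}$ has the amalgamation property exactly when it is generated by a single MV-chain---this being precisely Di Nola and Lettieri's description---together with a count of how many such subvarieties there are.

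For the count I would use Komori's classification of subvarieties of $\mathsf{MV}$: every proper subvariety is generated by a finite set consisting of finitely many finite {\L}ukasiewicz chains $\L_n$ and finitely many Komori chains $\L_n^{\omega}$. In particular there are only countably many subvarieties of $\mathsf{MV}$ altogether, hence only countably many with the amalgamation property, hence only countably many axiomatic extensions of $\L$ with the \prp{DIP}. Conversely, the single-chain-generated subvarieties already include $\mathsf{V}(\L_n)$ for every $n$, and these are pairwise distinct (of any two non-isomorphic finite chains at most one embeds into the other, so they generate distinct varieties); by the classification each $\mathsf{V}(\L_n)$ has the amalgamation property. Hence there are infinitely many axiomatic extensions of $\L$ with the \prp{DIP}, and combining with the upper bound there are countably infinitely many.

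The substance of the argument is the classification of amalgamable subvarieties, which has a positive and a negative part. For the positive part---that $\mathsf{V}(C)$ amalgamates for any single MV-chain $C$, covering the finite chains $\L_n$, the Komori chains $\L_n^{\omega}$, and the standard algebra on $[0,1]$ (which generates all of $\mathsf{MV}$)---I would first reduce amalgamation in a variety of MV-algebras to amalgamation of its totally ordered members over a common totally ordered subalgebra, using subdirect decomposition and the ideal-theoretic description of MV-congruences, and then verify chain amalgamation inside $\mathsf{V}(C)$; here the case $C=[0,1]$ follows from the amalgamation property of abelian $\ell$-groups via Mundici's $\Gamma$-equivalence, and the finite and Komori cases are handled by direct construction. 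The negative part is the main obstacle: one must show that every subvariety whose minimal Komori presentation uses more than one generator---two incomparable finite chains, a finite chain together with a Komori chain, two distinct Komori chains, or any larger configuration---fails to amalgamate. Since amalgamation is not inherited by subvarieties, each such variety genuinely has to be handled on its own. The strategy in each case is the same: exhibit a span $A\hookrightarrow B$, $A\hookrightarrow C$ over a very small common subalgebra (often just the two-element Boolean chain) and show that any $D$ in the variety receiving compatible embeddings of $B$ and $C$ is forced to contain a finite chain $\L_k$ lying outside the variety. In the locally finite cases this is a finite combinatorial argument---the subalgebra of $D$ generated by the images of $B$ and $C$ is finite and decomposes as a product of finite chains, no factor of which can simultaneously accommodate the relevant elements of $B$ and $C$---while the cases involving Komori chains need an analogous but more delicate analysis because those varieties are not locally finite. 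Carrying out this analysis across all of Komori's multi-generator families is where the real effort lies.
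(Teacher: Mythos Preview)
The paper does not actually prove this theorem: it is a survey chapter, and the statement is simply attributed to Di Nola and Lettieri with the remark that they ``proved it in an equivalent algebraic formulation by giving a complete description of equational classes of MV-algebras with the amalgamation property.'' Your proposal is exactly this route---translate the \prp{DIP} into amalgamation via algebraizability and the local deduction theorem, then invoke the Di Nola--Lettieri classification of amalgamable subvarieties of $\mathsf{MV}$, with Komori's description of subvarieties supplying the cardinality count---so it aligns precisely with the method the paper indicates. Your sketch of the positive and negative parts of the classification (chain amalgamation plus case-by-case failures for multi-generator Komori presentations) is also an accurate outline of what the cited reference does.
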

 
Metcalfe, Montagna, and Tsinakis have obtained an analogous, albeit somewhat more complicated, classification for axiomatic extensions of the positive fragment of $\L$ that have the \prp{DIP}; see \cite[Theorem 63]{MMT14}. Like Di Nola and Lettieri, their result is phrased in algebraic terms: They actually give a complete classification of equational classes of Wajsberg hoops that have the amalgamation property, and the result for the \prp{DIP} follows because these algebraic structures provide the equivalent algebraic semantics of positive {\L}ukasiewicz logic. We do not state the classification in full, but note the following consequence.

\begin{proposition}[{cf.~\cite[Theorem 63]{MMT14}}]
Let $\L^+$ be the positive fragment of $\L$. There are countably infinitely many axiomatic extensions of $\L^+$ with the \prp{DIP}.
\end{proposition}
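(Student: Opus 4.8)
The plan is to push the problem over to the algebraic side and then read the count off from the Metcalfe--Montagna--Tsinakis classification. First I would recall that positive {\L}ukasiewicz logic $\L^+$ is algebraizable, with equivalent algebraic semantics the variety $\mathsf{WH}$ of Wajsberg hoops (this is precisely the setting of \cite[Theorem 63]{MMT14}); consequently the axiomatic extensions of $\L^+$ stand in bijective correspondence with the subvarieties of $\mathsf{WH}$, the lattice of the former being dually isomorphic to the lattice of the latter. This is the same bridge that underlies the Di Nola--Lettieri description for $\L$ itself, now applied one level down in the language.

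Second I would invoke the standard link between deductive interpolation and amalgamation: for an algebraizable logic whose equivalent algebraic semantics is a variety with the congruence extension property, an axiomatic extension has the \prp{DIP} if and only if the associated subvariety has the amalgamation property (see \refchapter{chapter:algebra}). Every hoop variety, in particular $\mathsf{WH}$ and all of its subvarieties, has the congruence extension property, so this equivalence applies uniformly to all axiomatic extensions of $\L^+$. Hence the axiomatic extensions of $\L^+$ with the \prp{DIP} are exactly those whose corresponding subvariety of $\mathsf{WH}$ is amalgamable.

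Third I would apply \cite[Theorem 63]{MMT14}, which gives a complete, explicit description of the amalgamable subvarieties of $\mathsf{WH}$, and extract two bounds from it. For the lower bound, the description contains an infinite family of pairwise distinct amalgamable subvarieties---for instance the varieties generated by a single finite totally ordered Wajsberg hoop, which are pairwise distinct since distinct finite chains generate distinct varieties---and these yield countably infinitely many distinct axiomatic extensions of $\L^+$ with the \prp{DIP}. For the upper bound, the list of amalgamable subvarieties furnished by \cite[Theorem 63]{MMT14} is itself countable (each such variety being generated by a single totally ordered Wajsberg hoop drawn from an explicitly enumerated collection), so there are at most countably many. Together these give the statement.

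The step I expect to be the main obstacle is the second one: one must take care that the \prp{DIP}-versus-amalgamation correspondence is invoked in exactly the right form. The delicate point is not the congruence extension property---which is immediate for hoops---but rather that the bridge theorem requires $\L^+$ to be algebraizable with $\mathsf{WH}$ as its equivalent algebraic semantics, and that it is plain amalgamation (rather than a strengthening such as strong amalgamation) that matches the \prp{DIP}; all of this is part of what \cite{MMT14} set up, the \prp{DIP} being the logical translation of exactly the property they classify. Granting that, the remaining argument is pure bookkeeping inside their classification, entirely parallel to the deduction of the Di Nola--Lettieri count for $\L$.
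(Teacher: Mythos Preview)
Your proposal is correct and follows essentially the same route the paper indicates: the paper does not give a formal proof, but the surrounding text explains that the result is obtained by translating the Metcalfe--Montagna--Tsinakis classification of amalgamable varieties of Wajsberg hoops across the algebraizability bridge to the \prp{DIP} for axiomatic extensions of $\L^+$, exactly as you do. Your added care about the congruence extension property and the precise form of the bridge theorem is appropriate and goes slightly beyond what the paper spells out.
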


\subsubsection{H\'{a}jek's Basic Fuzzy Logic and Its Extensions}

As discussed above, {\L}ukasiewicz logic is characterized by a single algebraic model constructed on top of the real unit interval $[0,1]$, where the strong conjunction is interpreted by the binary operation $x\cdot y = \max\{x+y-1,0\}$. This operation is associative, commutative, has $1$ as a unit element, and is monotone in each of its coordinates. The binary operations on $[0,1]$ satisfying this latter list of conditions are often called \emph{triangular norms} or \emph{t-norms} for short. Triangular norms have played an extremely important role in the development of approximate reasoning, and the {\L}ukasiewicz t-norm defined by $x\cdot y = \max\{x+y-1,0\}$ has an additional important property: It is continuous as a map $[0,1]\times[0,1]\to [0,1]$, where $[0,1]$ has its usual topology.

Continuous t-norms have a number of nice properties. Among other things, each continuous t-norm has a corresponding residual operation $\to\colon[0,1]\times [0,1]\to [0,1]$ defined by stipulating that $x\to y = \max \{z\in [0,1] \mid x\cdot z\leq y\}$, which is equivalent to stipulating the \emph{residuation law},
\[
x\cdot y\leq z \iff x\leq y\to z.
\]
Further, the operations of binary infimum $\meet$ and binary supremum $\join$ turn out to be definable in the this language too. H\'{a}jek undertook an extensive study of the logics arising in this way from continuous t-norms in \cite{Hajek1998}. There he provided a Hilbert-style axiomatization for his basic fuzzy logic $\BL$ as a logic in the language $\Lsc=\{\meet,\join,\cdot,\to,0,1\}$. Later on, Cignoli, Godo, Esteva, and Torrens showed in \cite{CEGT2000} that H\'{a}jek's axiomatization is complete: $\BL$ is exactly the logic of continuous t-norms. 

Interpolation for $\BL$ and neighboring logics has been the subject of lots of work for several decades now. Although $\BL$ is the logic of all continuous t-norms, arbitrary axiomatic extensions of $\BL$ may not be characterized by continuous t-norms on [0,1]. Among these, Baaz and Veith have shown in \cite{BaazVeith1999} that the only logic with the \prp{CIP} is G\"{o}del-Dummett logic (see Theorem~\ref{thm:IPC}), which may be viewed as the extension of $\BL$ by the axiom $x\leftrightarrow x\cdot x$. In particular, $\BL$ itself does not have the \prp{CIP}. This result has subsequently been strengthened by Montagna \cite{Montagna2006}, who showed that the only axiomatic extensions of $\BL$ with the \prp{CIP} are (definitionally equivalent to) superintuitionistic logics. 

\begin{theorem}[\cite{Montagna2006}, cf.~\cite{BaazVeith1999}]\label{thm:CIP in BL}
The axiomatic extensions of H\'{a}jek's basic fuzzy logic $\BL$ with the \prp{CIP} are exactly:
\begin{enumerate}
\item G\"odel-Dummett logic $\lgk{LC}$, axiomatized relative to $\BL$ by $x\leftrightarrow x\cdot x$.
\item The axiomatic extension of $\lgk{LC}$ characterized by the three-element Heyting algebra.
\item Classical propositional logic $\cpl$.
\item The trivial logic consisting of all formulas.
\end{enumerate}
\end{theorem}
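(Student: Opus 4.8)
The plan is to argue algebraically and to reduce everything to Maksimova's classification of superintuitionistic logics with the \prp{CIP} (Theorem~\ref{thm:IPC}), using the key observation that an axiomatic extension of $\BL$ proving $x\leftrightarrow x\cdot x$ is nothing but a superintuitionistic logic in disguise. Concretely: a BL-algebra satisfying $x\cdot x\approx x$ has $\cdot=\meet$, so its reduct to $\{\meet,\join,\to,0,1\}$ is a prelinear Heyting algebra (a G\"odel algebra), and conversely, setting $\cdot:=\meet$ turns a G\"odel algebra back into such a BL-algebra. This is a term-equivalence that fixes the residuum $\to$, so it induces a lattice isomorphism between the axiomatic extensions of $\BL$ extending $\lgk{LC}$ and the superintuitionistic logics extending $\lgk{LC}$, and it preserves the \prp{CIP} for $\to$.

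The easy inclusion is then immediate: each of the four logics in the statement is, under this term-equivalence, one of Maksimova's eight superintuitionistic logics with the \prp{CIP}---namely, one of the ones that extend $\lgk{LC}$---so each has the \prp{CIP}. For the converse I would prove the \emph{reduction}: every axiomatic extension $\lgk L$ of $\BL$ with the \prp{CIP} proves $x\leftrightarrow x\cdot x$. Granting this, $\lgk L$ is an axiomatic extension of $\lgk{LC}$, hence corresponds to a superintuitionistic logic with the \prp{CIP} extending $\lgk{LC}$, and Theorem~\ref{thm:IPC} forces it to be one of the four. (Note that the implicative local deduction theorem $\Gamma,\alpha\vdash_\lgk{L}\beta\iff\Gamma\vdash_\lgk{L}\alpha^n\to\beta$ together with modus ponens and the first Proposition of Section~\ref{sec:deduction} already give that $\lgk L$ has the \prp{DIP}, so the associated variety $\mathbb V$ of BL-algebras is amalgamable; but the \prp{CIP} for the primitive residuum is strictly stronger than amalgamation here, and it is this surplus that the reduction must exploit.)

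To prove the reduction I would argue contrapositively. If $\lgk L\not\vdash x\leftrightarrow x\cdot x$, then, since every BL-algebra is a subdirect product of BL-chains, $\mathbb V$ contains a BL-chain $C$ on which $\cdot$ is not idempotent; by the ordinal-sum structure of BL-chains (sums of Wajsberg hoops and cancellative hoops), some hoop summand of $C$ is not the two-element Boolean algebra, so $C$ has an element $a$ with $a^2<a$. Using the powers of $a$---which make it possible to encode a multiplicity count that no term in a proper subset of the variables can reproduce---I would construct formulas $\varphi,\psi$ in the language $\Lsc$ with $\vdash_\lgk{L}\varphi\to\psi$ such that, evaluated in $C$, every candidate $\delta$ in the common variables of $\varphi$ and $\psi$ fails one of $\varphi\to\delta$ or $\delta\to\psi$; hence $\lgk L$ has no interpolant for $\varphi\to\psi$. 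This generalizes the counterexamples of Baaz and Veith \cite{BaazVeith1999} for $\L$ and for product logic, the {\L}ukasiewicz sub-case being in the spirit of Komori's theorem. An alternative, more algebraic route would be to isolate the property of $\mathbb V$ that corresponds to the \prp{CIP} of $\to$ for $\BL$-extensions---a strengthening of amalgamation akin to Maksimova's super-amalgamation---and show that it forces $\cdot=\meet$ on every subdirectly irreducible member of $\mathbb V$.

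The main obstacle is this reduction, and specifically the construction and verification of the counterexample. The witnessing formulas have to be adapted to the type of non-idempotent component at hand---finite {\L}ukasiewicz chains, the standard MV-algebra, and cancellative (product-like) components behave differently and generate different subvarieties---one must ensure the chosen implication is actually a theorem of $\lgk L$ rather than of some weaker logic, and excluding all interpolants requires a workable normal form for $\BL$-provability in the relevant subvariety. The difficulty is compounded by $\BL$ having only a local deduction theorem: the familiar shortcut reducing the \prp{CIP} for $\to$ to amalgamation is unavailable, so one cannot just invoke the classification of amalgamable varieties of BL-algebras but has to work with the \prp{CIP} in its original form throughout.
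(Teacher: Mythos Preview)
The paper does not prove this theorem; as a survey it cites the result to Montagna \cite{Montagna2006} and summarizes the argument in a single line, namely that ``the only axiomatic extensions of $\BL$ with the \prp{CIP} are (definitionally equivalent to) superintuitionistic logics.'' Your plan is exactly this reduction---show that the \prp{CIP} forces $x\leftrightarrow x\cdot x$, pass through the term-equivalence between idempotent BL-algebras and G\"odel algebras, and then read off the four logics from Theorem~\ref{thm:IPC} as the members of Maksimova's list extending $\lgk{LC}$---so it matches the approach the paper attributes to Montagna. Of the two routes you sketch for the hard direction, Montagna's paper in fact takes the second, algebraic one (a super-amalgamation-type characterization of the \prp{CIP} for $\BL$-extensions, from which idempotency of the subdirectly irreducibles is extracted) rather than the Baaz--Veith-style explicit counterexample construction, though the survey itself does not record that level of detail.
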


Axiomatic extensions of $\BL$ have the same local deduction theorem as $\L$, i.e., every axiomatic extension $\lgk{L}$ of $\BL$ satisfies
\[
\Gamma,\alpha\vdash_\lgk{L}\beta \iff \Gamma\vdash_\lgk{L}\alpha^n\to\beta \text{ for some positive integer }n,
\]
so the \prp{CIP} implies the \prp{DIP} in this setting, and the \prp{DIP} is equivalent to the \prp{MIP}. However, the \prp{DIP} need not imply the \prp{CIP} for an axiomatic extension of $\BL$. Indeed, the situation is much more complicated for deductive interpolation. The first progress on this was made by Montagna in 2006.

\begin{theorem}[\cite{Montagna2006}]
H\'{a}jek's basic fuzzy logic $\BL$ has the \prp{DIP}, but there are uncountably many axiomatic extensions of $\BL$ that do not have the \prp{DIP}.
\end{theorem}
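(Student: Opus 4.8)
The plan is to establish the two halves of the statement by rather different means, as they call for genuinely different techniques. For the positive half---that $\BL$ itself has the \prp{DIP}---I would work on the algebraic side, via the equivalence between the \prp{DIP} for an axiomatic extension of $\BL$ and the amalgamation property for the corresponding variety of BL-algebras (see \refchapter{chapter:algebra}). So the task reduces to showing that the variety of all BL-algebras has the amalgamation property. Here the natural route is to exploit the structure theory of BL-algebras: every BL-algebra is a subdirect product of BL-chains, and every BL-chain decomposes (by the work underlying the completeness theorem of \cite{CEGT2000}) as an ordinal sum of Wajsberg hoops. One would first prove amalgamation for totally ordered BL-algebras by amalgamating the constituent hoop-components along the ordinal-sum skeleton---using that the relevant classes of hoops (Wajsberg hoops, cancellative hoops) themselves amalgamate---and then lift amalgamation from the chains to the whole variety by a standard subdirect-representation argument. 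Since $\BL$ has an implicative local deduction theorem (as noted just before the theorem), the \prp{DIP} so obtained is automatically equivalent to the \prp{MIP}, but that is not needed here.

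For the negative half---that uncountably many axiomatic extensions of $\BL$ fail the \prp{DIP}---I would again pass to the algebraic side and instead exhibit a continuum-sized family of subvarieties of BL-algebras, no two of which amalgamate. The idea is to use the fact that there are continuum-many axiomatic extensions of $\BL$ to begin with (this follows from the corresponding fact for, say, subvarieties of product algebras or of BL-chains built from cancellative components), and to locate the obstruction to amalgamation within a single well-chosen region of the subvariety lattice. Concretely, I would look at varieties generated by BL-chains that are ordinal sums involving a cancellative hoop summand parametrized by arithmetic data (e.g. which positive integers $n$ satisfy a prescribed divisibility-type equation), arrange that the defining equations are mutually independent so that distinct parameter sets give distinct varieties, and then show that for an appropriate pair of algebras in such a variety there is no amalgam inside it: any candidate amalgam would force a common element whose order-theoretic position violates one of the defining equations. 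Counting the parameter sets gives continuum-many failures.

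The main obstacle I anticipate is the negative half, and specifically the amalgamation-failure argument. Showing that a variety lacks amalgamation requires producing a concrete span $A \hookleftarrow C \hookrightarrow B$ and then ruling out \emph{every} embedding of $A$ and $B$ into a common member of the variety, which is delicate because BL-algebras admit many pushout-like constructions and one must verify that none of them lands back inside the chosen subvariety. The technical heart is therefore a careful analysis of how ordinal-sum decompositions of a putative amalgam interact with the equations cutting out the subvariety---one needs the equations to be ``rigid'' enough that the amalgam's chain structure is essentially forced, so that the contradiction is visible. By contrast, the positive half is comparatively routine once the ordinal-sum structure theory for BL-chains is in hand, since amalgamation for the individual hoop classes is known and the gluing along ordinal sums is well-behaved.
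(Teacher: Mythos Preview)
The paper is a survey and does not prove this theorem; it simply attributes the result to \cite{Montagna2006} and moves on, so there is no in-paper proof to compare against. Your outline is a faithful reconstruction of the algebraic strategy the paper gestures at throughout (the amalgamation/interpolation bridge of \refchapter{chapter:algebra}) and is essentially the approach Montagna actually takes: for the positive half, amalgamation of BL-algebras via the ordinal-sum decomposition of BL-chains into Wajsberg-hoop components, lifted from chains to the full variety; for the negative half, a continuum of subvarieties of BL-algebras witnessed to lack amalgamation.

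One point worth tightening in the negative half: your suggestion to parametrize by cancellative-hoop data is plausible but not the path of least resistance. Montagna's argument instead exploits varieties generated by \emph{finite} BL-chains (ordinal sums of finite MV-components), where the combinatorics of which chains embed into which are completely explicit; this makes the ``rigidity'' you correctly identify as the crux much easier to verify, since the obstruction to amalgamation can be read off from the lengths of the MV-summands. Your cancellative-component idea could likely be made to work, but the finite-chain route avoids the analytic complications of cancellative hoops and gives a cleaner counting argument.
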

Montagna did not obtain a complete description of axiomatic extensions of $\BL$ with the \prp{DIP}, or even determine whether there are countably many of these (there are at infinitely many of them since $\L$ is an axiomatic extension of $\BL$). Subsequent work by Cortonesi, Marchioni, and Montagna \cite{CMM11} (applying techniques from first-order model theory) and Aguzzoli and Bianchi \cite{AB21,AB23} (applying more algebraic techniques) significantly deepened our understanding of the \prp{DIP} in axiomatic extensions of $\BL$, and a complete description of these was finally obtained by Fussner and Santschi in \cite{FS2024BL}. This description turns out to be rather elaborate, but we summarize a few of its salient features in the following theorem.

\begin{theorem}[{\cite{FS2024BL}}]
There are only countably many axiomatic extensions of $\BL$ with the \prp{DIP}. Indeed, the subposet of these in the lattice of all axiomatic extensions of $\BL$ is the disjoint union of countably infinitely many finite intervals. The same is true of the positive fragment of $\BL$.
\end{theorem}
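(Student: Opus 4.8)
The plan is to pass to algebra. The logic $\BL$ is algebraizable with the variety of $\BL$-algebras as its equivalent algebraic semantics, so its axiomatic extensions correspond exactly to the subvarieties of $\BL$-algebras, and the bridge theorems linking deductive interpolation to amalgamation (see \refchapter{chapter:algebra}) turn the \prp{DIP} for an axiomatic extension into the amalgamation property (AP) for the associated subvariety. The theorem thus reduces to the purely algebraic assertion that only countably many subvarieties of $\BL$-algebras enjoy the AP, and that these form a subposet of the subvariety lattice which is the disjoint union of countably infinitely many finite intervals; by contrast there are uncountably many subvarieties of $\BL$-algebras (cf.\ the preceding theorem), so the content is that the AP is extremely rare. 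The positive fragment is treated in parallel, being the algebraizable logic of the variety of basic hoops.

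The first real step is to reduce the AP for a subvariety $V$ of $\BL$-algebras to a statement about chains. Every such subvariety is generated by its totally ordered members, and by the structure theorem of Aglian\`o and Montagna every $\BL$-chain is an ordinal sum $A_0\oplus A_1\oplus\cdots$ of Wajsberg hoops with $A_0$ bounded (for basic hoops, $A_0$ is an arbitrary Wajsberg hoop). One then shows that $V$ has the AP exactly when every span of chains of $V$ over a common subchain can be completed to a chain of $V$. This reduction to ``chain amalgamation'' is one of the delicate points, since amalgamation does not in general survive the passage to a generated variety, so it must be argued with care in the semilinear setting. Granting it, the problem becomes: decide when two ordinal sums of Wajsberg hoops can be amalgamated over a common ordinal-sum subchain while staying inside $V$.

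The heart of the proof is this ordinal-sum amalgamation analysis, built on the known classifications of the relevant ``building blocks'': Di Nola and Lettieri's description of the subvarieties of MV-algebras with the AP, which are exactly those generated by a single chain \cite{DiNola2000}, and Metcalfe, Montagna, and Tsinakis's classification of the subvarieties of Wajsberg hoops with the AP \cite{MMT14}, together with the earlier partial progress of Cortonesi--Marchioni--Montagna \cite{CMM11} and Aguzzoli--Bianchi \cite{AB21,AB23}. The new ingredient is control of the interaction between components at different levels of the ordinal sum: one must construct amalgams that glue the layered decompositions of the two given chains coherently over the layered decomposition of the common part, and one must isolate the ``forbidden'' layer configurations that obstruct amalgamation, so that $V$ has the AP precisely when all of its chains avoid every forbidden configuration. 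A bookkeeping argument over the possible layer configurations, now severely constrained, then shows that only countably many subvarieties survive, and tracking the minimal and maximal configurations compatible with amalgamation organizes the survivors into pairwise disjoint finite intervals of the subvariety lattice. Rerunning the analysis with the bottom constant dropped gives the same picture for basic hoops, hence for the positive fragment of $\BL$.

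I expect the ordinal-sum amalgamation analysis to be the main obstacle. Unlike the MV case of Di Nola and Lettieri or the Wajsberg-hoop case of Metcalfe, Montagna, and Tsinakis, one must here track several distinct component types at once together with their order-theoretic arrangement, supply an explicit amalgam for every configuration that ought to survive, prove a matching non-amalgamation result for every configuration that ought not, and then verify that the resulting list of amalgamable configurations is genuinely countable and interval-structured---intricate precisely because the ordinal sum admits unboundedly many layers.
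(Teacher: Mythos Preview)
The paper is a survey and contains no proof of this theorem; it merely cites \cite{FS2024BL} and records a few hints about the argument there. Against those hints, your plan is essentially on target: the reduction to amalgamation in subvarieties of BL-algebras (and basic hoops for the positive fragment), the passage to chains, the Aglian\`o--Montagna ordinal-sum decomposition, and the reliance on the Di Nola--Lettieri and Metcalfe--Montagna--Tsinakis classifications for the component pieces are all the right ingredients, and the prior partial work you cite is exactly what the survey cites as well.

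The one substantive point the survey flags that your sketch does not is methodological: the paper notes that the classification in \cite{FS2024BL} ``draws on techniques from regular languages.'' In other words, the combinatorics of allowable ordinal-sum layer configurations is not handled by ad hoc bookkeeping but by encoding the sequences of component types as words and controlling the amalgamable configurations via language-theoretic closure conditions. This is precisely what makes the ``countably many, arranged in countably many finite intervals'' statement provable rather than merely plausible, and it is what prompts the survey's subsequent open question about decidability. Your final paragraph correctly anticipates that the ordinal-sum amalgamation analysis is the crux; the missing idea is that regular-language machinery is what tames it.
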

Indeed, the axiomatic extensions of $\BL$ with the \prp{DIP} are explicitly described in \cite{FS2024BL} in terms of their characteristic generating models. Although this description is quite explicit, an explicit axiomatization of these countably infinitely many logics is not known.
\begin{question}
Is it possible to give explicit axiomatizations for the extensions of $\BL$ with the \prp{DIP}? Are all of them finitely axiomatizable?
\end{question}
The description of extensions of $\BL$ with the \prp{DIP} from \cite{FS2024BL} draws on techniques from regular languages, and suggests that there is some decidability result lurking in the background. In connection with the previous question, one might wonder about the following point.
\begin{question}
Is the problem of determining whether a given finitely axiomatized extension of $\BL$ has the \prp{DIP} effectively decidable?
\end{question}
The previous question amounts, of course, to whether it is possible to determine algorithmically whether a given finitely axiomatized extension of $\BL$ is one of the logics listed in \cite{FS2024BL}.

\subsubsection{Monoidal T-norm Based Logics}

One of the most notable features of continuous t-norms, considered as opposed to arbitrary t-norms, is that each continuous t-norm $\cdot$ has a residual defined by $x\to y = \max \{z\in [0,1] \mid x\cdot z\leq y\}$. This fact is not a consequence of full continuity, but instead only involves the lower semicontinuity of the t-norm $\cdot$. Consequently, logics associated to lower semicontinuous t-norms have recently attracted considerable attention. The first proper study of this topic was \cite{EstGo2001}, which introduced the monoidal t-norm based logic $\mtl$ as a logic defined over the basic language $\{\meet,\join,\cdot,\to,0,1\}$. This logic is characteristic with respect to arbitrary algebras of lower semicontinuous t-norms, and $\BL$ itself is the axiomatic extension of $\mtl$ by the divisibility condition $x\cdot (x\to y) \leftrightarrow x\meet y$.

To date, there is not much systematic information about interpolation in axiomatic extensions of $\mtl$ that go beyond the case of $\BL$ extensions. Analogous proofs as those given by Baaz and Veith \cite{BaazVeith1999} and Montagna \cite{Montagna2006} show that the only axiomatic extensions of $\mtl$ with the \prp{CIP} are the axiomatic extensions of $\BL$ with the \prp{CIP}, i.e., those summarized in Theorem~\ref{thm:CIP in BL}. By showing the failure of amalgamation in certain associated varieties, it can be shown with small algebraic models that $\mtl$ does not have the \prp{DIP} either; see \cite{FusSan2024semlin,UgolGiust2024}, but we have not yet identified any minimal axiomatic extensions of $\mtl$ with the \prp{DIP}. Many other open questions remain in this vicinity, e.g., the cardinality of the set of axiomatic extensions of $\mtl$ with the \prp{DIP} is not known.

\begin{question}
Is there a minimum axiomatic extension of $\mtl$ with the \prp{DIP}? Equivalently, is there a largest variety of MTL-algebras with the amalgamation property?
\end{question}

\begin{question}
Are there uncountably many axiomatic extensions of $\mtl$ with the the \prp{DIP}?
\end{question}

\subsection{Relevant Logics}\label{subsec:relevant}

Relevant logics arise from the intuition that, in any logically valid implication $\alpha\to\beta$, the antecedent $\alpha$ and the consequent $\beta$ ought to be germane to one another. This is often made precise by requiring the \emph{variable sharing property}, which demands that for a logic $\lgk{L}$,
\[
\vdash_\lgk{L} \alpha\to\beta \Rightarrow \var{\alpha}\cap\var{\beta}\neq\emptyset.
\]
The main line of research on relevant logics descends from the school of Anderson and Belnap, represented, for example, by the monographs \cite{AB1,AB2}. Probably the most prominent logical system in this family is the logic $\R$, which is often specified either in the language $\{\meet,\join,\to,\neg\}$ or $\{\meet,\join,\cdot,\to,\neg\}$. The operation $\cdot$ represents a form of conjunction often called \emph{cotenability}, and may be defined from the others by stipulating that $\alpha\cdot\beta$ abbreviates $\neg(\neg\alpha\to\beta)$. The most extensively studied axiomatic extension of $\R$ is probably the logic R-mingle $\RM$, axiomatized relative to $\R$ itself by adding
\[
\alpha\to (\alpha\to\alpha),
\]
an axiomatic form of the mingle rule (see Section~\ref{sec:substructural logics}).

Various technical concerns have also motivated the addition of logical constants---often called \emph{Ackermann constants} in the relevantist literature---$\mathsf{t}$ and $\mathsf{f}$ to the language. When this is done, $\mathsf{t}$ is usually stipulated to be a neutral element for cotenability and $\mathsf{f}$ is its negation. The logics in this expanded signature are often denoted by $\Rt$ and $\RMt$, respectively. All of these logics can be variously specified by Hilbert-style systems or algebraic semantics in terms of so-called \emph{De Morgan monoids} \cite{AB1}, or by relational semantics that employ a ternary accessibility relation \cite{RM1972}. In the case of R-mingle and its variants, the accessibility relation can be taken to be binary; see \cite{Dunn1976,FG2019}.

The presence of the Ackermann constants makes a big difference for these logics: $\R$ does not have a local deduction theorem, but $\Rt$ has an explicit deduction theorem of the form
\[
\Gamma,\alpha\vdash_{\Rt}\beta \iff \Gamma\vdash (\alpha\meet\mathsf{t})\to\beta.
\]
The same deduction theorem holds for axiomatic extensions of $\Rt$, including $\RMt$. On the other hand, $\RM$ has an explicit deduction theorem with the non-standard form
\[
\Gamma,\alpha\vdash_\RM \beta \iff (\neg(\alpha\to\neg\beta)\join (\alpha\to\beta))\meet (\neg\alpha\join\beta),
\]
a result due to Tokarz; see \cite[p.~126]{CzeProto}.

Urquhart showed in \cite{Urq1993} that the \prp{CIP} fails for a huge number of relevant logics, notably including $\R$ and $\Rt$, as well as the related systems $\lgk{E}$ of entailment and $\lgk{T}$ of ticket entailment. Urquhart's proof used techniques from projective geometry to construct some ternary-relational models witnessing the failure of amalgamation (see \refchapter{chapter:algebra}) from which an implication serving as a counterexample for the \prp{CIP} could be extracted. As an artifact of these geometric techniques, the counterexample witnessing failure of the \prp{CIP} was very large: The largest of the three algebras giving the amalgamation failure had cardinality exceeding $10^{247}$, and the implication showing the failure of \prp{CIP} had $14$ variables. Later on, by exhibiting the failure of Beth definability, Urquhart gave a much simpler proof of the failure of $\prp{CIP}$ in $\Rt$ and some other logics in the vicinity, showing that the valid three-variable implication
\[
\vdash_{\Rt}[\mathsf{t}\meet (q\to\mathsf{f})\meet ((q\meet p)\to\mathsf{t})]\to \{[\mathsf{t}\meet (\mathsf{t}\to r)\meet (\mathsf{f}\to (p\join r))]\to (q\to r)\},
\]
has no Craig interpolant; see \cite{Urq99,RRR2}.

Prior to Urquhart's work, Anderson and Belnap showed at \cite[pp. 416-417]{AB1} that $\RM$ lacks \prp{CIP} in the following formulation:
\begin{equation*}
  \parbox{\dimexpr\linewidth-6em}{%
    \strut
     Whenever $\not\vdash_\RM\neg\alpha$, $\not\vdash_\RM\beta$, and $\vdash_\RM \alpha\to\beta$, then there exists a formula $\delta$ such that $\vdash_\RM \alpha\to\delta$, $\vdash_\RM \delta\to\beta$, and $\var{\delta}\subseteq\var{\alpha}\cap\var{\beta}$.
    \strut
  }
\end{equation*}
Avron has introduced another implication-like connective $\supset$ in $\RM$, defined in terms of the primitive connectives by
\[
\alpha\supset\beta = (\alpha\to\beta)\join\beta
\]
Avron has shown in \cite{Avr1986} that this connective gives a classical deduction theorem for $\RM$ and that it satisfies the following interpolation property:
\begin{quote}
If $\vdash_{\RM}\alpha\supset\beta$ and $\not\vdash_{\RM} \beta$, then there is a formula $\delta$ such that $\vdash_\RM\alpha\to\delta$, $\vdash_\RM\delta\to\beta$, and $\var{\delta}\subseteq\var{\alpha}\cap\var{\beta}$.
\end{quote}
Returning to the case with logical constants $\mathsf{t}$ and $\mathsf{f}$, Meyer reportedly showed in \cite{Mey80} that $\RMt$ has the \prp{CIP}. Much later, Marchioni and Metcalfe gave an exhaustive classification of the axiomatic extensions of $\RMt$ with the \prp{CIP}, using a combination of techniques from algebra and model theory, in particular tools from quantifier elimination.
\begin{theorem}[{\cite{MarMet2012}}]
Including the trivial logic consisting of all formulas, there are exactly nine axiomatic extensions of $\RMt$ with the \prp{CIP}. Further, the \prp{CIP} and the \prp{DIP} coincide for axiomatic extensions of $\RMt$, so these are also exactly the axiomatic extension with the \prp{DIP}.
\end{theorem}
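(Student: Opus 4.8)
The plan is to translate the theorem into the algebraic setting and then exploit the unusually transparent structure theory of the algebras for $\RMt$. Since $\RMt$ is (finitely) algebraizable, its axiomatic extensions correspond dually and bijectively to the subvarieties of the variety $\mathsf{SM}$ of Sugihara monoids---that is, the idempotent De Morgan monoids, equipped with the constants $\mathsf{t}$ and $\mathsf{f}=\neg\mathsf{t}$---with the trivial logic matching the trivial variety. Under the standard Maksimova-style bridge (see \refchapter{chapter:algebra}), the \prp{CIP} for a logic $\lgk{L}$ in this family corresponds to the superamalgamation property of the associated subvariety, and the \prp{DIP} to the ordinary amalgamation property. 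Note that the implication \prp{CIP} $\Rightarrow$ \prp{DIP} is already free: $\RMt$ validates modus ponens and, by the explicit deduction theorem for $\to$ recorded above (which holds for $\Rt$ and hence for $\RMt$, and has the implicative form $t(p)\to q$ with $t(p)=p\meet\mathsf{t}$), it has an implicative local deduction theorem, so the first proposition of Section~\ref{sec:deduction} applies uniformly to all axiomatic extensions of $\RMt$. The real work is therefore the converse together with the count.

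For the coincidence of the two properties I would lean on the deduction-theoretic apparatus of R-mingle. Beyond the explicit deduction theorem for $\to$, $\RMt$ carries Avron's definable implication $\alpha\supset\beta = (\alpha\to\beta)\join\beta$, which satisfies a classical deduction theorem; since axiomatic extensions only add theorems, this classical deduction theorem persists in every axiomatic extension of $\RMt$. A short, uniform calculation then converts any deductive interpolant $\delta$ for $\alpha\vdash_{\lgk{L}}\beta$ (with the correct variables) into a Craig interpolant for $\to$: one moves between $\to$ and $\supset$, uses the deduction theorems to pass between $\vdash$ and $\vdash_{\lgk{L}}$, and reabsorbs the constant $\mathsf{t}$. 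Algebraically this is precisely the statement that amalgamation and superamalgamation collapse for subvarieties of $\mathsf{SM}$, the point being that the underlying order is term-definable, so any amalgam automatically respects it. With this in hand, the entire theorem reduces to counting the subvarieties of $\mathsf{SM}$ with the amalgamation property.

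To perform the count I would first fix the lattice of subvarieties of $\mathsf{SM}$. This lattice is countable and is assembled from the finitely generated varieties generated by the finite Sugihara chains---which come in an ``odd'' family (those with a negation fixed point $\mathsf{t}=\mathsf{f}$) and an ``even'' family, interleaved in a well-understood way---together with a handful of limit varieties and the variety generated by the integer chain $\mathbb{Z}$. Every member of $\mathsf{SM}$ is a subdirect product of totally ordered members and $\mathsf{SM}$ has the congruence extension property, so for a subvariety $\mathsf{V}$ one can reduce amalgamation of an arbitrary span in $\mathsf{V}$ to amalgamation of a span of \emph{totally ordered} algebras in $\mathsf{V}$ and then lift. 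Amalgamating constant-pointed Sugihara chains is where the model theory enters: the first-order theory of the relevant reducts of these chains admits quantifier elimination in a suitably expanded language, and a back-and-forth argument over the resulting one-variable descriptions either builds the amalgam explicitly or exhibits the combinatorial obstruction---typically a clash between the positions of $\mathsf{t}$ (top, bottom, or strictly interior) in the two factors, interacting with the involution and idempotency. For each obstruction one reads off a concrete valid implication of $\RMt$ (or of the extension) with no Craig interpolant, and, because the obstruction reflects downward along the chains of finitely generated varieties, a single failure propagates to cofinitely many subvarieties; only finitely many candidates then remain to be checked by hand. The bookkeeping comes out to exactly nine amalgamable subvarieties: eight nontrivial ones---among them $\RMt$ itself and the constant expansion of $\cpl$---together with the trivial variety.

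The principal obstacle, I expect, is the chain-amalgamation analysis together with getting the subvariety lattice exactly right: one must determine precisely which order types of constant-pointed Sugihara chains can be fitted together over a common subalgebra, handling all configurations of the constant relative to the involution and to the idempotent monoid operation, and one must ensure that the quantifier-elimination machinery is strong enough to decide \emph{every} case rather than merely the generic one. By comparison, the reduction to chains, the lifting step, and the downward-reflection argument that keeps the count finite are comparatively routine, but they do depend on the good structural behavior of $\mathsf{SM}$ (subdirect decomposition into chains plus the congruence extension property), which should be established at the outset.
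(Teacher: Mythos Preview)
The paper is a survey and does not give its own proof of this theorem; it merely cites \cite{MarMet2012} and summarizes the method in one line as ``a combination of techniques from algebra and model theory, in particular tools from quantifier elimination.'' Your proposal---algebraize via Sugihara monoids, reduce \prp{CIP}/\prp{DIP} to (super)amalgamation, exploit the subdirect decomposition into chains and the congruence extension property to reduce to amalgamation of chains, and settle the chain case via quantifier elimination---matches that description and is indeed the strategy of \cite{MarMet2012}.

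One caveat on your coincidence argument: invoking Avron's connective $\supset$ and its classical deduction theorem gives you \prp{DIP} $\Leftrightarrow$ \prp{CIP} \emph{for $\supset$}, not directly for the primitive $\to$; the paper in fact stresses that the coincidence is ``especially notable'' precisely because $\RMt$ lacks a classical deduction theorem for $\to$. You would still need to pass from a $\supset$-interpolant to a $\to$-interpolant, and the naive move (rewriting $\alpha\supset\delta$ as $(\alpha\to\delta)\lor\delta$) does not obviously yield $\vdash\alpha\to\delta$. In \cite{MarMet2012} the coincidence is instead obtained on the algebraic side, by showing that amalgamation and superamalgamation collapse for subvarieties of Sugihara monoids; your parenthetical remark that ``the underlying order is term-definable, so any amalgam automatically respects it'' is closer to the actual argument and is the route you should commit to, rather than the detour through $\supset$.
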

The fact that the \prp{CIP} and the \prp{DIP} coincide in this setting is especially notable since, unlike the many of the previously considered cases that exhibit this behavior, $\RMt$ does not have a classical deduction theorem for the usual implication.

The logic $\RMt$ is characterized by linearly ordered De Morgan monoids called \emph{Sugihara monoids}, but Sugihara monoids do not exhaust the linearly ordered algebraic models of $\Rt$. The class of all semilinear De Morgan monoids---that is, those De Morgan monoids that are subalgebras of direct products of linearly ordered ones---characterizes an axiomatic extension $\SemRt$ of which the class of all semilinear De Morgan monoids is the equivalent algebraic semantics. The following result contrasts against the apparent scarcity of relevant logics with interpolation.

\begin{proposition}[{\cite[Proposition 5.3]{FusSan2024semlin}}]
The logic $\SemRt$ does not have the \prp{DIP} and hence does not have the \prp{CIP} either, but there are infinitely many axiomatic extensions of $\SemRt$ that have the \prp{DIP}.
\end{proposition}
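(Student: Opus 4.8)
The plan is to recast the statement algebraically. The logic $\SemRt$ has as its equivalent algebraic semantics the variety $\mathsf{SDM}$ of \emph{semilinear De~Morgan monoids}---those De~Morgan monoids that are subdirect products of linearly ordered ones---and likewise each axiomatic extension $\lgk{L}$ of $\SemRt$ is algebraized by a subvariety $\mathcal{V}_{\lgk{L}}\subseteq\mathsf{SDM}$. By the bridge theorems relating deductive interpolation to amalgamation (see \refchapter{chapter:algebra}), the \prp{DIP} for $\lgk{L}$ is controlled by an amalgamation-type property of $\mathcal{V}_{\lgk{L}}$: the amalgamation property itself when the congruence extension property is available for $\mathcal{V}_{\lgk{L}}$, and a suitable weakening of it in general. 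Before invoking this, observe that the clause ``hence does not have the \prp{CIP} either'' costs nothing: $\Rt$ and all of its axiomatic extensions---$\SemRt$ among them---validate modus ponens and the implicative local deduction theorem $\Gamma,\alpha\vdash\beta\iff\Gamma\vdash(\alpha\meet\mathsf{t})\to\beta$, so by the first proposition of Section~\ref{sec:deduction} the \prp{CIP} implies the \prp{DIP} for $\SemRt$, and the failure of the \prp{DIP} therefore forces the failure of the \prp{CIP}. It thus remains to show (i) that $\SemRt$ lacks the \prp{DIP}, and (ii) that infinitely many of its axiomatic extensions enjoy it.

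For (i), I would exhibit a small failure of amalgamation in $\mathsf{SDM}$: De~Morgan chains $\mathbf{A}\leq\mathbf{B}$ and $\mathbf{A}\leq\mathbf{C}$ that admit no joint embedding into any member of $\mathsf{SDM}$. The obstruction is extracted from the structure theory of linearly ordered De~Morgan monoids---the interaction of the monoid multiplication on the new elements of $\mathbf{B}$ and of $\mathbf{C}$ with the order-reversing involution and with the constraints imposed by residuation can be arranged so that any common extension would be forced to place these elements in mutually incompatible positions relative to the shared skeleton of $\mathbf{A}$. A more hands-on variant of the negative direction bypasses the bridge theorem entirely: analyse semantically what $\alpha\vdash_{\SemRt}\beta$ means and then choose $\alpha,\beta$ so that this consequence holds while no formula in the common variables interpolates it. Either way, for the write-up it suffices to record one explicit witness.

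For (ii), I would produce an infinite family of subvarieties $\mathcal{V}_1,\mathcal{V}_2,\ldots\subseteq\mathsf{SDM}$, each with the amalgamation property and with pairwise distinct equational theories (so the associated logics are genuinely different). A natural supply of candidates is the De~Morgan-monoid analogue of the Di~Nola--Lettieri classification recalled above for {\L}ukasiewicz logic: for suitably chosen finite De~Morgan chains $\mathbf{S}_n$, let $\mathcal{V}_n$ be the variety generated by $\mathbf{S}_n$. Each $\mathcal{V}_n$ is finitely generated and, having a distributive lattice reduct, congruence distributive, so by J\'onsson's lemma its subdirectly irreducible members form a finite, explicitly describable set of De~Morgan chains. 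One then checks the amalgamation property for $\mathcal{V}_n$ by reducing, via semilinearity, to amalgamating De~Morgan chains over a common subchain---with the amalgam sought inside $\mathcal{V}_n$, and not necessarily a chain---and by performing this amalgamation via an explicit ``interleaving'' construction that simultaneously interpolates the order, the multiplication, the residual, and the involution. Picking the $\mathbf{S}_n$ of increasing size with genuinely distinct equational theories then yields infinitely many distinct axiomatic extensions of $\SemRt$ with the \prp{DIP}.

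The hard part is the chain-amalgamation step in (ii). For Sugihara monoids---the idempotent case underlying $\RMt$---such interleavings are classical, but members of $\mathsf{SDM}$ carry genuinely non-idempotent elements, so the construction must control where products of the freshly added elements land while preserving both residuation and the involution, and it must remain inside the fixed finitely generated subvariety $\mathcal{V}_n$ rather than drift into a larger variety. A second, related difficulty is curating the family $\{\mathcal{V}_n\}$: the equations defining each $\mathcal{V}_n$ must be strong enough to exclude the non-amalgamable span isolated in (i) yet weak enough to leave room for infinitely many pairwise distinct choices.
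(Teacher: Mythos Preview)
The paper is a survey and contains no proof of this proposition; it simply cites \cite[Proposition~5.3]{FusSan2024semlin} and moves on. So there is nothing in the paper itself to compare your argument against. That said, your overall strategy---pass to the equivalent algebraic semantics (semilinear De~Morgan monoids), use the bridge between the \prp{DIP} and amalgamation, exhibit a failing span for~(i), and produce infinitely many subvarieties with the amalgamation property for~(ii)---is exactly the methodology of the cited source, and your derivation of ``no \prp{CIP}'' from ``no \prp{DIP}'' via the explicit deduction theorem $(\alpha\meet\mathsf{t})\to\beta$ is correct and matches the paper's own Section~\ref{sec:deduction}.

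One caution on your sketch of~(ii): the Di~Nola--Lettieri analogy is only heuristic here. MV-algebras are integral ($x\leq\mathsf{e}$) while De~Morgan monoids are square-increasing ($x\leq x\cdot x$); an algebra satisfying both is idempotent, so the MV-chains themselves are not De~Morgan monoids and cannot be imported directly. The infinite family in \cite{FusSan2024semlin} is built from genuinely De~Morgan-monoid chains (containing, but going well beyond, the Sugihara chains underlying $\RMt$), and the amalgamation arguments there rely on structure theory specific to that setting rather than on anything transferred from MV-algebras. Your acknowledgement that the chain-amalgamation step is ``the hard part'' is apt: it is where all the work lies, and your proposal does not yet supply a mechanism for it.
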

We do not yet have a complete classification of axiomatic extensions of $\Rt$ with the \prp{DIP} or the \prp{CIP}, nor even such a classification for $\SemRt$. Obtaining such a complete classification likely demands a more refined structural understanding of De Morgan monoids.

\begin{question}
Is it possible to classify the extensions of $\SemRt$ with the \prp{DIP}? What about the \prp{CIP}? What about for $\Rt$, $\R$ and $\RM$?
\end{question}

Urquhart's many results are often wrongly summarized as saying that relevant logics lack interpolation. As we have seen above, the situation is more complicated. To the extent that the variable sharing property can be taken as a hallmark for relevance, the following question is raised.

\begin{question}
What is the precise relationship between interpolation and the variable sharing property?
\end{question}

\subsection{Substructural Logics}\label{sec:substructural logics}

Many of the logics discussed in the previous sections can be regarded as \emph{substructural logics}. There is no mathematically precise demarcation of this family of logics, but they are unified by the idea of dropping or restricting the application of the structural rules of exchange $\pfa{\exr}$, left weakening $\pfa{\wlr}$, right weakening $\pfa{\wrr}$, contraction $\pfa{\cnr}$, and mingle $\pfa{\mglr}$ that are validated in the Gentzen calculus for $\ipc$; see Figure~\ref{fig:structural} and also \refchapter{chapter:prooftheory}. Most often, this involves including a `strong conjunction' connective $\cdot$ in the basic language, which simulates comma as it appears in Gentzen calculi. Further modification of the basic signature of $\ipc$ is necessitated in some cases. For example, comma is non-commutative when the exchange rule $\pfa{\exr}$ is dropped, and this presents the possibility of having separate modus ponens rules from the left and the right. Thus, we may have two separate implication connectives $\under$ and $\ovr$ linked to these two modus ponens rules: $\alpha, \alpha\under \beta \seq \beta$ and $\beta\ovr \alpha, \alpha \seq \beta$. One may also include a truth constant $\mathsf{e}$ and a falsity constant $\mathsf{f}$. For our purposes here, we take the basic language of substructural logics to be $\Ln_\FL = \{\meet,\join,\cdot,\under,\ovr,\mathsf{e},\mathsf{f}\}$. We will also consider the positive (i.e., $\mathsf{f}$-free) language $\Ln_\FL^+$.

\begin{figure}[t]
\begin{center}
\fbox{
{
\begin{minipage}{13cm}
\begin{align*}
\begin{array}{cccc}
\infer[\pfa{\exr}]{\Ga_1,\Pi_2,\Pi_1,\Ga_2\seq\De}{\Ga_1,\Pi_1,\Pi_2,\Ga_2\seq\De} & &
\infer[\pfa{\cnr}]{\Ga_1,\Pi,\Ga_2\seq\De}{\Ga_1,\Pi,\Pi,\Ga_2\seq\De}\\[.15in]
\infer[\pfa{\wlr}]{\Ga_1,\Pi,\Ga_2\seq\De}{\Ga_1,\Ga_2\seq\De} & &
\infer[\pfa{\wrr}]{\Ga_1,\Pi,\Ga_2\seq\De}{\Pi\seq}
\end{array}\\[.15in]
\infer[\pfa{\mglr}]{\Ga_1,\Pi_1,\Pi_2,\Ga_2\seq\De}{\Ga_1,\Pi_1,\Ga_2\seq\De &\Ga_1,\Pi_2,\Ga_2\seq\De} \qquad
\end{align*}
\caption{The basic structural rules of proof theory, as formulated in \cite{MPT23}. Here each $\Ga_i$, $\Pi$, $\De$, and so on denote finite sequences of formulas, with at most one formulas appearing to the right of $\seq$.}
\label{fig:structural}
\end{minipage}
}}
\end{center}
\end{figure}

Many well-studied substructural logics can be understood as axiomatic extensions of the full Lambek calculus $\FL$, a logic arising from an analytic Gentzen-style sequent calculus formulated in the language $\Ln_\FL$ that omits all of the basic structural rules appearing in Figure~\ref{fig:structural}. Many of the logics we have considered---for instance, $\cpl$, $\ipc$, $\L$, $\BL$, $\mtl$, $\Rt$, $\RMt$, and all axiomatic extensions of these---can be realized in an equivalent formulation as axiomatic extensions of $\FL$. In the latter axiomatic extensions---and indeed in any axiomatic extensions of $\FL$ validating the exchange rule---one may prove that $\alpha\under\beta$ and $\beta\ovr\alpha$ are interderivable. Thus, in the presence of exchange, one may get away with only a single implication in the language, taking us back to the more familiar formulation of several logics with exchange that we have already seen.

The logic $\FL$ and its axiomatic extensions also admit an equivalent algebraic semantics in terms of generalizations of Heyting algebras called \emph{FL-algebras}. Similar remarks apply to the positive fragment of $\FL$, which is algebraized by the equational class of \emph{residuated lattices}. See the monographs \cite{MPT23,GJKO07} for details, along with \refchapter{chapter:algebra}.

Since $\FL$ has two distinct implications, it isn't immediately clear what we mean by the \prp{CIP} in this context. Thankfully, one may show that having \prp{CIP} with respect to $\under$ is equivalent to having the \prp{CIP} with respect to $\ovr$, for any axiomatic extension of $\FL$. Hence we may speak of the \prp{CIP} for some given axiomatic extension of $\FL$ tout court.

Crucially, $\FL$ does not even have a local deduction theorem. It admits a parameterized local deduction theorem (see e.g. \cite[Section 2.4]{GJKO07}), but this kind of deduction theorem is insufficient for the usual proof that the \prp{CIP} implies the \prp{DIP} that holds in many logics. It is shown at \cite[Proposition 10]{KO10} that the \prp{CIP} and the \prp{MIP} are independent for axiomatic extensions of $\FL$, but it is not currently known whether the \prp{CIP} implies the \prp{DIP} for arbitrary axiomatic extensions of $\FL$.

\begin{question}
Let $\lgk{L}$ be an axiomatic extension of $\FL$. If $\lgk{L}$ has the \prp{CIP}, does $\lgk{L}$ have the \prp{DIP}?
\end{question}

The lack of a useful deduction theorem for $\FL$ presents many challenges to the study of interpolation for $\FL$ and its extensions. In the presence of the exchange rule $\pfa{\exr}$, the deduction theorem for $\FL$ simplifies considerably and axiomatic extensions with exchange are much better behaved from the perspective of interpolation. This is discussed further in Subsection~\ref{subsec:exchange} below.

\subsubsection{Interpolation and the Basic Structural Rules}

Aside from $\FL$ itself, the most fundamental axiomatic extensions of $\FL$ arise from adding basic structural rules, as in Figure~\ref{fig:structural}, to the Gentzen calculus defining $\FL$.\footnote{Note that these extensions really are axiomatic: The exchange rule is captured by the axiom $(\alpha\cdot\beta)\to (\beta\cdot\alpha)$, the contraction rule by $\alpha\to (\alpha\cdot\alpha)$, the left weakening rule by $\alpha\to\mathsf{e}$, the right weakening rule by $\mathsf{f}\to\alpha$, and the mingle rule by $(\alpha\cdot\alpha)\to\alpha$.} For the basic structural rules appearing in Figure~\ref{fig:structural}, these are denoted by adding subscripts naming the added structural rules so that, for example, $\FLe$ denotes $\FL$ with the exchange rule, $\FLec$ denotes $\FL$ with both the exchange and contraction rules, and so forth. The two weakening rules $\pfa{\wlr}$ and $\pfa{\wrr}$ are often considered together, so that $\FLw$ denotes the addition of both of them. The logic $\FLecw$ is a definitional variant of $\ipc$.

Because the aforementioned logics have well-behaved analytic sequent calculi, the \prp{CIP} can be established for many of them using proof-theoretic techniques grounded in Maehara's method; see \refchapter{chapter:prooftheory}. The following results from the application of this proof strategy.

\begin{theorem}[{See \cite{OK1985}, \cite[Section 5.1.3]{GJKO07}}]\label{thm:basic rules}
Each of $\FL$, $\FLe$, $\FLw$, $\FLew$, and $\FLec$ has the \prp{CIP}.
\end{theorem}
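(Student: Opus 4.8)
The plan is to establish the \prp{CIP} via Maehara's method applied to the cut-free sequent calculi for each of these logics. Since each of $\FL$, $\FLe$, $\FLw$, $\FLew$, and $\FLec$ is defined by an analytic Gentzen-style sequent calculus that enjoys cut elimination, we may assume that whenever $\vdash_\lgk{L}\alpha\under\beta$ (equivalently, whenever the sequent $\alpha\seq\beta$ is derivable), there is a cut-free derivation of $\alpha\seq\beta$. Recall that $\alpha\seq\beta$ is derivable iff $\vdash_\lgk{L}\alpha\under\beta$, reducing the theorem to extracting an interpolant from a cut-free proof of such a sequent.

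The core of the argument is a \emph{partition lemma}: for any cut-free derivable sequent $\Ga\seq\De$ and any splitting of the antecedent multiset (or sequence, in the non-exchange case $\FL$ and $\FLw$) as $\Ga = \Ga_1; \Ga_2$, there exists a formula $\delta$ with $\Ga_1\seq\delta$ and $\delta; \Ga_2\seq\De$ derivable and $\var{\delta}\subseteq\var{\Ga_1}\cap\var{\Ga_2\cup\De}$. Specializing to $\Ga_1 = \alpha$, $\Ga_2 = \emptyset$, $\De = \beta$ then yields $\alpha\seq\delta$ and $\delta\seq\beta$, i.e.\ $\vdash_\lgk{L}\alpha\under\delta$ and $\vdash_\lgk{L}\delta\under\beta$, with $\var{\delta}\subseteq\var{\alpha}\cap\var{\beta}$, which is the \prp{CIP}. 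One proves the partition lemma by induction on the height of the cut-free derivation, treating each inference rule in turn: for each rule one must say how a split of the conclusion induces splits of the premise(s), apply the induction hypothesis, and then recombine the resulting interpolants using the connectives of $\Ln_\FL$ (typically $\join$ and $\cdot$ for left-premise combinations, $\meet$ for branching-rule combinations, and the implications $\under,\ovr$ when a formula is split across the comma in a logical rule). The variable condition is maintained at each step by the subformula property of the cut-free calculus. For $\FLew$ one additionally uses left/right weakening when a partition block is empty, and for $\FLec$ contraction is handled by merging duplicate partition components; the presence of $\mathsf{e}$ (for $\FLew$) and of $\mathsf{f}$ gives canonical interpolants for the degenerate cases (empty left block, empty succedent).

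The main obstacle is the careful rule-by-rule bookkeeping, which is genuinely delicate in two places. First, for the non-exchange systems $\FL$ and $\FLw$ one must partition \emph{sequences} rather than multisets, and the interpolant-combining step for the multiplicative rules (in particular $\pfa{\cdot\seq}$ and the two implication-left rules $\ldlr,\rdlr$) must respect the left/right asymmetry, so one tracks which side of the split a principal formula lies on and uses $\under$ versus $\ovr$ accordingly; this is why the statement quantifies over ordered splittings. Second, the two implication-right rules $\ldrr,\rdrr$ move a formula from the succedent into the antecedent, so when the split puts that formula on the "wrong" side one must invoke the deduction-style behaviour of $\under,\ovr$ to keep the variable condition; getting the variable sets to line up here is the crux. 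Everything else—$\meet$, $\join$, $\mathsf{e}$, $\mathsf{f}$, identity and the structural rules present in each system—is routine once the inductive hypothesis is correctly formulated. The literature references \cite{OK1985} and \cite[Section 5.1.3]{GJKO07} carry out exactly this induction, and I would follow that template, checking that no step appeals to a structural rule absent from the system in question (so that the same proof skeleton specializes uniformly to all five logics).
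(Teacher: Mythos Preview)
Your proposal is correct and follows exactly the approach the paper indicates: the paper gives no proof at all beyond pointing to Maehara's method on the cut-free calculi and citing \cite{OK1985} and \cite[Section~5.1.3]{GJKO07}, which is precisely the strategy you sketch. The only refinement worth flagging is that in the non-commutative systems $\FL$ and $\FLw$ the partition lemma in those references is formulated for a three-part split $\Ga_1,\Si,\Ga_2\seq\De$ (extracting a contiguous middle block $\Si$) rather than your two-part $\Ga_1;\Ga_2$, since the induction step for $\ldlr$ and $\rdlr$ needs that extra flexibility.
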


Due to the lack of a local deduction theorem, we do not know whether $\FL$ and $\FLw$ have the \prp{DIP}. Santschi and Jipsen have recently shown in \cite{SJ_RL} that the equational class of FL-algebras does not have the amalgamation property and hence that $\FL$ does not have the Robinson property (see \refchapter{chapter:algebra}), but in the context of $\FL$ this appears to be insufficient to show that the \prp{DIP} fails.

\begin{question}
Does $\FL$ have the the \prp{DIP}? 
\end{question}

For logics with exchange, however, the situation is simpler.

\subsubsection{Logics with Exchange}\label{subsec:exchange}

Although general axiomatic extensions of $\FL$ have only a parameterized local deduction theorem, every axiomatic extension $\lgk{L}$ of $\FLe$ has an implicative local deduction theorem of the following form:
\[
\Gamma,\alpha\vdash_\lgk{L}\beta \iff \Gamma\vdash_\lgk{L}(\alpha\meet\mathsf{e})^n\to\beta \text{ for some positive integer }n.
\]
Consequently, for each axiomatic extension $\lgk{L}$ of $\FLe$, the \prp{CIP} implies the \prp{DIP} and so, by Theorem~\ref{thm:basic rules}, each of $\FLe$, $\FLew$, and $\FLec$ has the \prp{DIP}. Moreover, because extensions of $\FLe$ are conjunctive, the \prp{DIP} is equivalent to the \prp{MIP} in this setting. The local deduction theorem also eases the application of algebraic techniques in the study of the \prp{DIP} for extensions of $\FLe$---in particular allowing proof strategies based on amalgamation as in \refchapter{chapter:algebra}---and there has commensurately been quite a lot of success in studying the \prp{DIP} in various axiomatic extensions of $\FLe$. We have already seen this in the case of $\L$, $\BL$, $\Rt$, and $\RMt$, all of which can be realized as extensions of $\FLe$.

Girard's linear logic \cite{girard87}, in both its classical and intuitionistic variants, can also be understood as arising from the calculus for $\FLe$ by including additional constants $\bot$ and $\top$ as well as `exponentials' $!$ and $?$, which represent certain kinds of $\SF$-like modal box connectives. Roorda has shown that the \prp{CIP} for these systems, as well as several fragments thereof, can also be established using the proof-theoretic methods previously mentioned; see \cite{Roorda1994}. Intuitionistic and classical linear logic also have implicative local deduction theorems due to Avron \cite{Avr88}, so Roorda's result entails the \prp{DIP} for both classical and intuitionistic linear logic as well. Indeed, using algebraic tools, Fussner and Santschi exhibited uncountably many axiomatic extensions of each of $\FL$, classical linear logic, and intuitionistic linear logic with the \prp{DIP}.

\begin{theorem}[{\cite{FSLinear}}]
There are continuum-many axiomatic extensions of $\FLe$ with the \prp{DIP}. The same holds of the expansion of $\FLe$ by constants designating bounds $\bot$ and $\top$, as well as by the exponentials $!$ and $?$. In particular, each of intuitionistic linear logic and classical linear logic has continuum-many axiomatic extensions with the \prp{DIP}.
\end{theorem}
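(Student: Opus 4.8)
The plan is to translate the statement into algebra and to produce continuum-many varieties of $\FLe$-algebras with the amalgamation property. First I would invoke the bridge theorem relating deductive interpolation and amalgamation (see \refchapter{chapter:algebra}): every axiomatic extension $\lgk{L}$ of $\FLe$ has the implicative local deduction theorem recorded in Subsection~\ref{subsec:exchange} and is algebraized by a variety $\mathsf{V}_\lgk{L}$ of pointed commutative residuated lattices, and since these algebras have a lattice reduct (hence are congruence-distributive) and enjoy the congruence extension property, one obtains that $\lgk{L}$ has the \prp{DIP} if and only if $\mathsf{V}_\lgk{L}$ has the amalgamation property. So it suffices to exhibit $2^{\aleph_0}$ pairwise distinct amalgamable varieties of $\FLe$-algebras; since the subvarieties of a variety in a finite language number at most $2^{\aleph_0}$, this simultaneously gives the matching upper bound and hence exactly continuum-many such logics.

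Second, I would construct a countably infinite family $\{\mathbf{A}_n : n\in\mathbb{N}\}$ of finite subdirectly irreducible $\FLe$-algebras that are mutually separating, in the sense that there are equations $\varepsilon_n$ with $\mathbf{A}_m\models\varepsilon_n$ iff $m\neq n$. The natural candidates are finite commutative residuated chains of increasing complexity, whose linear structure is rigid enough both to supply such separating equations and to make amalgams computable. For $S\subseteq\mathbb{N}$ set $\mathsf{V}_S=\mathbb{V}(\{\mathbf{A}_n : n\in S\})$. The separating equations give $\mathbf{A}_n\in\mathsf{V}_S$ iff $n\in S$, so $S\mapsto\mathsf{V}_S$ is injective and $\{\mathsf{V}_S : S\subseteq\mathbb{N}\}$ has cardinality $2^{\aleph_0}$.

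Third — the heart of the argument — I would show that every $\mathsf{V}_S$ has the amalgamation property. Using congruence-distributivity and J\'onsson's lemma, the finitely generated subdirectly irreducibles of $\mathsf{V}_S$ lie in $\mathbb{HSP}_U(\{\mathbf{A}_n : n\in S\})$, so by the standard reduction of amalgamation to finitely subdirectly irreducible vertices it is enough to amalgamate spans $\mathbf{B}\hookleftarrow\mathbf{A}\hookrightarrow\mathbf{C}$ whose vertices are ultraproducts of the chains $\mathbf{A}_n$ with $n\in S$. For totally ordered residuated structures such spans admit an explicit amalgam built by gluing $\mathbf{B}$ and $\mathbf{C}$ along $\mathbf{A}$ in the order, and one checks that this amalgam again decomposes as a subdirect product of members of $\{\mathbf{A}_n : n\in S\}$, so it stays inside $\mathsf{V}_S$. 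Combined with the first two steps, this yields continuum-many axiomatic extensions of $\FLe$ with the \prp{DIP}.

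I expect the main obstacle to be precisely this third step: securing the amalgamation property \emph{simultaneously} for all $2^{\aleph_0}$ of the $\mathsf{V}_S$. Amalgamation is not preserved under joins of varieties, so the chains $\mathbf{A}_n$ must be engineered so that the explicit amalgam of any span drawn from a sub-collection never escapes that sub-collection; producing finite $\FLe$-chains that are at once mutually separating and closed under amalgamation in this modular fashion is the delicate part, and is where the real work of \cite{FSLinear} lies. A secondary complication is the passage to the expansions: to obtain continuum-many extensions of $\FLe$ enriched with bounds $\bot,\top$ and with the exponentials $!,?$ (hence for intuitionistic and classical linear logic), one re-runs the construction with these operations present—taking $\bot,\top$ as the extrema of the finite chains and $!$ as a suitable interior-type operator—and verifies that the separating equations, the explicit amalgams, and (via Avron's deduction theorem for linear logic) the \prp{DIP}-from-amalgamation bridge all survive the enrichment.
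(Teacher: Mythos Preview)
The paper does not contain a proof of this theorem. It is a survey chapter that explicitly announces ``proofs will be entirely omitted'' and simply cites \cite{FSLinear} for the result, adding only the contextual remarks that the construction is algebraic and that the extensions produced lack the \prp{CIP}. There is therefore no proof in the paper against which your proposal can be compared.

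For what it is worth, your outline is consistent with the methodology the paper gestures at (``using algebraic tools''): the bridge from the \prp{DIP} to amalgamation via the implicative local deduction theorem, congruence-distributivity, and the congruence extension property is the standard route, and manufacturing $2^{\aleph_0}$ varieties by varying a generating set of finite subdirectly irreducible chains indexed by subsets of $\mathbb{N}$ is the expected shape of the argument. You also correctly locate the genuine difficulty in step three. One caution on the details you sketch there: the usual reduction (via J\'onsson's lemma and the congruence extension property) lets you assume the \emph{base} $\mathbf{A}$ of the span is finitely subdirectly irreducible, not that all three vertices are ultraproducts of the generators; and ``gluing $\mathbf{B}$ and $\mathbf{C}$ along $\mathbf{A}$ in the order'' is not a generally valid amalgamation procedure for residuated chains---getting this to work is exactly the engineering problem you flag, and it requires a careful choice of the $\mathbf{A}_n$ rather than generic finite chains.
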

Notably, the axiomatic extensions constructed in \cite{FSLinear} do not have the \prp{CIP}. The number of axiomatic extensions of $\FLe$ with the \prp{CIP} is apparently unknown. However, since there are continuum-many axiomatic extensions of $\FLe$ without the \prp{DIP}, there are also continuum-many axiomatic extensions of $\FLe$ without the \prp{CIP}.

\begin{question}
How many axiomatic extensions of $\FLe$ have the \prp{CIP}? Are there infinitely many? Uncountably many?
\end{question}

To close our discussion of logics with exchange, we note that Abelian logic---typically understood as the logic of Abelian lattice-ordered groups but also admitting a nice proof theory, as in \cite{MOG08}---can be also be realized as an axiomatic extension of $\FLe$. Because Abelian lattice ordered groups have the amalgamation property, it can also be shown that Abelian logic has the \prp{DIP}. See \cite{FSLinear} for a discussion of amalgamation in Abelian lattice-ordered groups in the context of residuated lattices.

\subsubsection{Logics Without Exchange}\label{subsec:notexchange}

As the previous sections attest, the \prp{DIP} is reasonably common among extensions of $\FLe$. Until recently, it was thought that this was not the case among logics without exchange. Indeed, quite recently it was thought that the exchange rule $\pfa{\exr}$ may be derivable in every axiomatic extension of $\FL$ with the \prp{DIP}, cf. \cite[Problem 5]{GLT15}. This was shown to not be the case by Gil-F\'{e}rez, Jipsen, and Metcalfe in \cite{GilJipMet2020}, which exhibited an equational class of idempotent residuated lattices with the amalgamation property that has non-commutative members. This equational class gives the equivalent algebraic semantics of a certain extension of $\FLcm$ in which $\pfa{\exr}$ is not derivable and which has the \prp{DIP}. The aforementioned equational class is \emph{semilinear} in the sense that its members consist of FL-algebras that are subalgebras of direct products of totally ordered FL-algebras, i.e., the equational class is generated by totally ordered algebras.

As in the case of De Morgan monoids (see Section~\ref{subsec:relevant}), the equational class of FL-algebras generated by the linearly ordered FL-algebras characterizes an axiomatic extension $\SemFL$ of $\FL$. It was shown in \cite{FG1} that axiomatic extensions of $\SemFLcm$ have a local deduction theorem of a non-standard form. Using this fact, \cite{FG2} gave several more natural examples of axiomatic extensions of $\FL$ that have the \prp{DIP} and in which $\pfa{\exr}$ is not derivable. In fact, these results, as well as those of \cite{GilJipMet2020}, actually apply not only to $\SemFLcm$, but also its $\mathsf{f}$-free fragment $\SemFL^+_\mathsf{cm}$.

Recently, by building on \cite{FG1,FG2}, Fussner, Metcalfe, and Santschi proved the following result. 

\begin{theorem}[{\cite[Theorem A]{FMS2024}}]
There are continuum-many axiomatic extensions of $\SemRLcm$ with the \prp{DIP} in which exchange is not derivable.
\end{theorem}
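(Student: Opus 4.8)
The plan is to argue algebraically, reducing the \prp{DIP} to the amalgamation property and then producing a continuum-sized family of varieties to which this reduction applies. The logic $\SemRLcm$ is algebraizable, and its equivalent algebraic semantics is a variety $\mathbf{RL}$ of residuated lattices: semilinearity together with the contraction and mingle rules forces the identity $x^2 \approx x$, so $\mathbf{RL}$ consists of semilinear \emph{idempotent} residuated lattices, which need not be commutative, and its axiomatic extensions correspond dually to the subvarieties of $\mathbf{RL}$. Using the non-standard local deduction theorem for axiomatic extensions of $\SemRLcm$ established in \cite{FG1}, one shows (in the manner surveyed in \refchapter{chapter:algebra}) that such an extension has the \prp{DIP} precisely when the corresponding subvariety of $\mathbf{RL}$ has the amalgamation property. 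Hence the theorem reduces to exhibiting continuum-many subvarieties of $\mathbf{RL}$, each with the amalgamation property and each containing a non-commutative algebra; the latter condition immediately makes exchange underivable in the associated logic, since exchange corresponds to the identity $x\cdot y \approx y\cdot x$.

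The continuum-many \emph{candidate} varieties are comparatively easy to produce. Residuated lattices are congruence distributive (they have a lattice reduct), so Jónsson's Lemma applies and the subvariety lattice of $\mathbf{RL}$ is amenable to a finite-combinatorial analysis: it suffices to isolate a countable family $\{\mathbf{S}_n\}_{n\in\mathbb{N}}$ of finite non-commutative idempotent residuated chains, each subdirectly irreducible, such that for $m\neq n$ the algebra $\mathbf{S}_m$ is not a homomorphic image of any subalgebra of $\mathbf{S}_n$; then, for $X\subseteq\mathbb{N}$, the variety $\mathsf{V}_X = \mathsf{V}(\{\mathbf{S}_n : n\in X\})$ contains $\mathbf{S}_m$ exactly when $m\in X$, so $X\mapsto\mathsf{V}_X$ is injective and yields $2^{\aleph_0}$ distinct subvarieties of $\mathbf{RL}$, each with a non-commutative member by construction. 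Building such an independent family using the structure theory of idempotent residuated chains from \cite{GilJipMet2020} (and the refinements in \cite{FG1,FG2}) amounts to varying a parameter in the ``conic'' decomposition of these chains so as to make them unable to occur in one another.

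The substantive part is to show that \emph{every} $\mathsf{V}_X$ has the amalgamation property, via a single construction uniform in $X$. I would first reduce the amalgamation property of a semilinear variety of residuated lattices to a chain-amalgamation problem: given chains $\mathbf{B},\mathbf{C}$ in the variety sharing a common subchain $\mathbf{A}$, it suffices to produce a chain (or at least a semilinear algebra) in the variety embedding both over $\mathbf{A}$. The key is then an explicit amalgam built from the ordinal-sum-like, layered decomposition of idempotent residuated chains: the common subalgebra $\mathbf{A}$ organizes $\mathbf{B}$ and $\mathbf{C}$ into compatible stratified pictures whose conic pieces can be interleaved, with the two residuals $\backslash$ and $/$ read off layer by layer and idempotency guaranteeing that the glued multiplication is well defined and associative. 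Crucially this construction is \emph{local}: the amalgam is assembled out of conic components already present in $\mathbf{B}$ and $\mathbf{C}$, introducing no new building blocks, so the defining identities of any $\mathsf{V}_X$ are inherited by the amalgam---which is precisely what lets one fixed construction witness amalgamation in all continuum-many varieties simultaneously.

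The main obstacle, as I see it, is exactly the tension between these two halves. The separating identities that make the $\mathsf{V}_X$ pairwise distinct must be compatible with the gluing operation used to amalgamate chains, so that the amalgam never escapes $\mathsf{V}_X$; arranging the family $\{\mathbf{S}_n\}$ and the amalgamation construction to cohere in this way---and verifying, with the precise form of the non-standard deduction theorem of \cite{FG1} in hand, that the chain-amalgamation procedure genuinely stays inside each generated variety---is the delicate core of the proof. Layered on top of this is the persistent bookkeeping forced by the absence of exchange: both the structure theory of the chains and every step of the amalgam must track $\backslash$ and $/$ as genuinely distinct operations, which is what separates this situation from the comparatively well-understood commutative case.
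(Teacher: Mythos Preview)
The paper is a survey and does \emph{not} contain a proof of this theorem: it is simply quoted from \cite{FMS2024}, with the surrounding text indicating only that the result ``builds on \cite{FG1,FG2}'' and uses the non-standard local deduction theorem for $\SemFLcm$ established in \cite{FG1}. There is therefore no proof in the paper to compare your proposal against.

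That said, your high-level strategy is consistent with what the paper signals about the methodology: the reduction of the \prp{DIP} to amalgamation via algebraizability and the local deduction theorem of \cite{FG1}, followed by the construction of a continuum of subvarieties of semilinear idempotent residuated lattices each with the amalgamation property and a non-commutative member, is indeed the shape of the argument in \cite{FMS2024}. Your identification of the core difficulty---arranging an antichain $\{\mathbf{S}_n\}$ of chains whose separating identities are simultaneously preserved by a single, uniform amalgamation construction---is accurate, and your emphasis on the ``local'' nature of the amalgam (built only from pieces already present in the given chains) is the right intuition for why the construction stays inside each $\mathsf{V}_X$.

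What you have written, however, is a plan rather than a proof: neither the independent family $\{\mathbf{S}_n\}$ nor the amalgamation construction is actually specified, and you explicitly flag the compatibility between them as unresolved. Since the paper offers nothing further, there is no discrepancy to point to; but you should be aware that the substance of \cite{FMS2024} lies precisely in carrying out the two steps you leave open.
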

Note that exchange is derivable in $\FLcw$, which constitutes a definitionally equivalent formulation of $\FLecw$ (i.e., of $\ipc$). On the other hand, the mingle rule $\pfa{\mglr}$ may be viewed as a slightly less powerful variant of weakening, and exchange is not derivable in $\FLcm$ or even in $\SemFLcm$. The previous theorem is thus rather striking in comparison to the case of $\ipc$, since $\FLcm$ is one natural candidate for a logic resulting from $\ipc$ by `dropping' exchange. On the other hand, adding exchange to $\SemFLcm$ results in the number of axiomatic extensions with the \prp{DIP} dropping from uncountably many to only finitely many.
\begin{theorem}[{\cite[Theorem B]{FMS2024}}]
There are exactly 60 axiomatic extensions of $\SemRLecm$ with the \prp{DIP}.
\end{theorem}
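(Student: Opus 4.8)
The plan is to pass to the algebraic side and count subvarieties with the amalgamation property. Since $\SemRLecm$ is algebraizable with equivalent algebraic semantics the variety $\mathsf{V}$ of semilinear idempotent commutative residuated lattices (exchange forces commutativity, and contraction together with mingle forces $\alpha\cdot\alpha\leftrightarrow\alpha$, i.e. idempotence), its axiomatic extensions are in dual lattice isomorphism with the subvarieties of $\mathsf{V}$. As a variety of residuated lattices, $\mathsf{V}$ has the congruence extension property, and combining this with the local deduction theorem for $\SemFLcm$ from \cite{FG1} (which, in the presence of exchange and idempotence, collapses to an essentially classical deduction theorem), the standard correspondence between deductive interpolation and amalgamation for such logics (see \refchapter{chapter:algebra}) shows that an axiomatic extension of $\SemRLecm$ has the \prp{DIP} if and only if the corresponding subvariety of $\mathsf{V}$ has the amalgamation property. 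So the theorem reduces to the purely algebraic claim that exactly $60$ subvarieties of $\mathsf{V}$ amalgamate.

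Next I would invoke the structure theory of totally ordered idempotent commutative residuated lattices (going back to Raftery and refined in \cite{FG1,FG2}): each such chain splits into a negative part below the identity $\mathsf{e}$ behaving like a relative Stone algebra, a positive part above $\mathsf{e}$ dual to it, and a multiplication that, away from the ``diagonal'', is determined by the lattice order, with only finitely many admissible configurations of the two parts around $\mathsf{e}$. Using this, one shows that the subvariety lattice of $\mathsf{V}$ is countable and that every subvariety is generated by a finite set of finite totally ordered members drawn from an explicit finite list of building blocks. This is the step that leans hardest on \cite{FG1,FG2}: one needs the finite subdirectly irreducible members relevant to the classification to have bounded shape, so that the combinatorial enumeration of subvarieties terminates with a concrete finite list.

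With the subvariety lattice in hand, I would decide amalgamation variety by variety via a one-sorted chain amalgamation: because $\mathsf{V}$ is generated by its chains and has the congruence extension property, verifying amalgamation reduces to amalgamating two totally ordered algebras of the variety over a common totally ordered subalgebra, which is done by an explicit ordinal-sum-style gluing of the two chains along the shared part, in the spirit of \cite{GilJipMet2020,FG2}; refuting amalgamation reduces to exhibiting a small concrete $V$-formation of finite chains with no amalgam, by showing any candidate would be forced to identify or reorder elements kept apart in the two extensions. Tabulating the outcomes over the finite list of subvarieties produces the number $60$; along the way one also records that the \prp{DIP} coincides with the \prp{MIP} here, $\SemRLecm$ being conjunctive with a local deduction theorem.

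The main obstacle is this final case analysis together with the completeness of the subvariety classification. The mixed algebras---those with both a nontrivial positive and a nontrivial negative part relative to $\mathsf{e}$---fall outside the well-understood relative Stone (G\"odel--Dummett) setting, so both the positive amalgamation constructions and the negative obstructions for them require genuinely new arguments; and one must be certain that no subvariety is missed, which is precisely where the structure theory of \cite{FG1,FG2} does the heavy lifting. Obtaining the sharp value $60$, rather than merely ``finitely many'', is entirely a matter of carrying this bookkeeping through in full detail.
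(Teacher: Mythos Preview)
The paper is a survey and gives no proof of this theorem; it simply cites \cite{FMS2024} and, in the surrounding discussion, indicates that the result is obtained by algebraic methods building on the structure theory of \cite{FG1,FG2} and \cite{GilJipMet2020}. Your outline is consistent with that indicated methodology: algebraize, translate the \prp{DIP} into amalgamation via the bridge theorems, use the structure theory of idempotent commutative residuated chains to obtain a finite description of the subvariety lattice, and then decide amalgamation subvariety by subvariety. So at the level of strategy there is nothing to correct.

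One technical point deserves more care than your sketch gives it. You write that, because $\mathsf{V}$ is generated by chains and has the congruence extension property, amalgamation ``reduces to amalgamating two totally ordered algebras of the variety over a common totally ordered subalgebra''. Semilinearity together with the congruence extension property reduces amalgamation to $V$-formations whose \emph{base} is totally ordered, but it does not by itself force the two extensions to be chains; getting all the way down to chain-over-chain $V$-formations requires an additional argument (typically that the amalgam of two chains can again be taken to be a chain, so that one can work inside subdirect components). This further reduction is indeed available in the idempotent commutative setting, but it is part of the work of \cite{FMS2024} rather than a formal consequence of the general bridge theorems, and you should flag it as such. Apart from this, your honest acknowledgment that the exact count of $60$ rests on a complete subvariety classification and a case analysis that you are not reproducing is appropriate: that is precisely where the content of \cite{FMS2024} lies, and the present survey does not attempt to summarize it either.
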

Thus, in at least some contexts, the exchange rule is actually an impediment to deductive interpolation. This stands in contrast to previous expectations that there may be no logics without exchange that have the \prp{DIP}. Note also that $\SemRLecm$ properly contains the positive fragment of G\"odel-Dummett logic, so the previous theorem can also be contrasted against the results of Maksimova discussed in Section~\ref{subsec:intuitionistic}.

The previous pair of theorems concern the positive logics $\SemRLcm$ and $\SemRLecm$. If we return the falsity constant $\mathsf{f}$ to the signature, we of course still retain continuum-many axiomatic extensions of $\SemFLcm$ with the \prp{DIP} and in which exchange cannot be derived. On the other hand, $\SemFLecm$ has many more extensions with \prp{DIP} than $\SemFLcm$. The exact number of these is not known, but it is shown in \cite[Proposition~5.14]{FMS2024} that $\SemFLecm$ has finitely many axiomatic extensions with the \prp{DIP} and that there are at least $60^4$ of these.

Many questions about interpolation in extensions of $\FL$ remain open. It is still unknown whether there are uncountably many axiomatic extensions of $\FLew$ with the \prp{DIP}, and relatively little is known about extensions of $\FL$ with the \prp{CIP}, aside from the few previously mentioned cases that are settled by application of Maehara's method.

\begin{question}
Are there uncountably many axiomatic extensions of $\FLew$ with the \prp{DIP}?
\end{question}

\begin{question}
Are there infinitely many axiomatic extensions of $\FLew$ with the \prp{CIP}?
\end{question}

\section*{Acknowledgments}
\addcontentsline{toc}{section}{Acknowledgments}

The work contained in this chapter was supported by the Czech Science Foundation project 25-18306M, INTERACT. The author would like to thank Johan van Benthem, Nick Bezhanishvili, and Han Gao for their many useful comments on a draft of this chapter.

%

\providecommand{\noopsort}[1]{}

\end{document}